\newtheorem{theorem}{Theorem}[section]
\newtheorem{thm}[theorem]{Theorem}
\newtheorem{conjecture}{Conjecture}
\newtheorem{theorema}{Theorem}
\newtheorem{prop}[theorem]{Proposition}
\newtheorem*{proposition*}{Proposition}
\newtheorem{lem}[theorem]{Lemma}
\newtheorem{cor}[theorem]{Corollary}
\newcommand{\F}{\mathbb{F}}
\newcommand{\N}{\mathbb{N}}
\newcommand{\Z}{\mathbb{Z}}
\newcommand{\CB}{\mathbf{C}}
\newcommand{\SL}{{\rm SL}}
\newcommand{\GL}{{\rm GL}}
\newcommand{\SU}{{\rm SU}}
\newcommand{\GU}{{\rm GU}}
\newcommand{\Irr}{{\rm Irr}}
\newcommand{\Hom}{{\rm Hom}}
\newcommand{\Spec}{{\rm Spec}\,}
\newcommand{\Cl}{{\rm Cl}}
\newcommand{\cG}{\mathcal{G}}
\newcommand{\cL}{\mathcal{L}}
\newcommand{\cH}{\mathcal{H}}
\newcommand{\cT}{\mathcal{T}}
\newcommand{\cO}{\mathcal{O}}
\newcommand{\cR}{\mathcal{R}}
\newcommand{\cS}{\mathcal{S}}
\newcommand{\cM}{\mathcal{M}}
\newcommand{\SSS}{\mathsf{S}}
\newcommand{\eps}{\varepsilon}
\newcommand{\cl}{\mathfrak{l}}
\newcommand{\tcl}{\cl^*}
\newcommand{\tw}[1]{{}^#1\!}
\newcommand{\SR}{\tw* R}
\let\rC\CB
\let\FF\F
\let\QQ\Q
\let\ZZ\Z
\DeclareMathOperator{\Ind}{Ind}
\DeclareMathOperator{\Res}{Res}
\newcommand{\CC}{\mathbb{C}}
\newcommand{\cB}{\mathcal{B}}
\newcommand{\scC}{\mathscr{C}}
\newcommand{\rN}{\mathbf{N}}
\newcommand{\cf}{\mathrm{cf}}
\newcommand{\abs}[1]{\lvert #1 \rvert}
\renewcommand{\setminus}{\smallsetminus}
\renewcommand{\emptyset}{\varnothing}
\numberwithin{equation}{section}
\begin{document}

\title{Representation growth of Fuchsian groups and modular forms}

\author{Michael Larsen}
\email{mjlarsen@indiana.edu}
\address{Department of Mathematics\\
    Indiana University \\
    Bloomington, IN 47405\\
    U.S.A.}
    
\author{Jay Taylor}
\email{jay.taylor@manchester.ac.uk}
\address{
Department of Mathematics\\
The University of Manchester\\
Oxford Road\\
Manchester, M13 9PL\\
U.K.}    

\author{Pham Huu Tiep}
\email{pht19@math.rutgers.edu}
\address{Department of Mathematics\\
    Rutgers University \\
    Piscataway, NJ 08854\\
    U.S.A.}
\thanks{The first author was partially supported by the NSF 
grants DMS-2001349 and DMS-2401098. The third author gratefully acknowledges the support of the NSF (grant
DMS-2200850) and the Joshua Barlaz Chair in Mathematics.}

\dedicatory{To the memory of Gary Seitz}

\begin{abstract}
Let $\Gamma$ be a cocompact, oriented Fuchsian group which is not on an explicit finite
list of possible exceptions and $q$ a sufficiently large prime power not divisible by the order of any non-trivial torsion element of $\Gamma$.  Then
$|\Hom(\Gamma,\GL_n(q))|\sim c_{q,n} q^{(1-\chi(\Gamma))n^2}$, where $c_{q,n}$ is periodic in $n$.  As a function of $q$, $c_{q,n}$ can be expressed as a Puiseux series in $1/q$ whose coefficients are periodic in $n$ and $q$.  Moreover, this series is essentially the $q$-expansion of a meromorphic modular form of half-integral weight.
\end{abstract}

\maketitle

\tableofcontents

\section{Introduction}

Let $\Gamma$ be a cocompact and oriented Fuchsian group (which, in what follows, we shall call simply a Fuchsian group).  
Concretely, this means that $\Gamma$ has a presentation
$$\langle x_1,y_1,\ldots,x_g,y_g,z_1,\ldots,z_r \vert z_1^{a_1},\,\ldots, z_r^{a_r},\,[x_1,y_1]\cdots[x_g,y_g]z_1\cdots z_r\rangle,$$
where $a_1,a_2,\ldots,a_r$ is a fixed (possibly empty) non-decreasing sequence of integers $a_i\ge 2$
such that the Euler characteristic 
$$\chi(\Gamma) := 2-2g- \sum_{i=1}^r (1-\frac 1{a_i})$$
is negative.  Let $\F_q$ be a finite field.
In this paper, we investigate the asymptotic growth in $n$ of the number of homomorphisms from $\Gamma$ to the group $\GL_n(q)$, which we denote $G_n$ when the value of $q$ is understood.

There are two complementary points of view.  On the one hand we can fix $n$ and consider the homomorphism scheme $\Hom(\Gamma,\GL_n) := \Hom(\Gamma,\GL_{n,\Z})$, which is defined over $\Z$.
For fixed characteristic $p>0$, we can think of the fiber of $\Hom(\Gamma,\GL_n)$
over $\Spec\F_p$ as the variety $\Hom(\Gamma,\GL_{n,\F_p})$ of homomorphisms from $\Gamma$ to $\GL_n$ over $\F_p$.  (Note that in this paper, a \emph{variety} will be just an affine scheme of finite type over a field;
in particular, it need not be either irreducible or reduced.)
Applying the Lang-Weil theorem to this variety as $q$ ranges over powers of $p$, we see that
the number of homomorphisms $\rho\colon \Gamma\to \GL_n(q)$ determines its \emph{dimension}, by which we mean the maximum dimension of any of its irreducible components.

On the other hand, for fixed $q$, we can partition homomorphisms $\rho$ according to the $r$-tuple of $G_n$-conjugacy classes 
$$(C_1,\ldots,C_r) = (\rho(z_1)^{G_n},\ldots,\rho(z_r)^{G_n}).$$
Each $C_i$ must consist of elements of order dividing $a_i$.  For given $(C_1,\ldots,C_r)$ satisfying this divisibility condition, the number of homomorphisms $\Gamma\to G_n$ 
with $\rho(z_i)\in C_i$ for all $i$ is given by a theorem of Hurwitz \cite[Proposition 3.2]{LiSh}:
\begin{equation}
\label{Hurwitz}
|G|^{2g-1}|C_1|\cdots |C_r|\sum_{\chi\in \Irr(G_n)} \frac{\chi(C_1)\cdots \chi(C_r)}{\chi(1)^{2g+r-2}}.
\end{equation}
Summing \eqref{Hurwitz} over all possible $r$-tuples $(C_1,\ldots,C_r)$ we obtain a
formula which can potentially be used for fixed $q$ to understand the asymptotic behavior of $|\Hom(\Gamma,G_n)|$ as $n\to \infty$.  

These two ways of counting $\Hom(\Gamma,\GL_n(q))$ are in some sense complementary.
For instance, just as we can use character methods to determine the dimension of $\Hom(\Gamma,\GL_n)$, we can use the dimension of $\Hom(\Gamma,\GL_n)$ to get an upper bound on
$\Hom(\Gamma,\GL_n(q))$ for all large $q$. For the large $n$ limit,  the first method of counting seems to be the more appropriate, and an analysis of \eqref{Hurwitz} has led us to the following conjecture:

\begin{conjecture}
\label{Form}
Let $A$ denote the least common multiple of $a_1,\ldots,a_r$, which we take to be $1$ if $r=0$.
Let $q$ be a prime power relatively prime to $A$.
\begin{enumerate}[\rm(a)]
\item There exists a $2A$-periodic sequence $c_{q,1},c_{q,2},\ldots$ of positive numbers such that 
$$|\Hom(\Gamma,\GL_n(q))|\sim c_{q,n} q^{(1-\chi(\Gamma))n^2}.$$
\item There exist a $2$-dimensional array $e_{\Gamma,q,n}$ of rational numbers and a $2$-dimensional array  $f_{\Gamma,q,n}$ of half-integral weight meromorphic modular forms, periodic in both $q$ and $n$, 
such that
$$c_{q,n} = (q-1)q^{e_{\Gamma,q,n}} f_{\Gamma,q,n}\bigl(\frac{i\log q}{2\pi}\bigr).$$
Moreover, $f_{\Gamma,q,n}$ is holomorphic on the upper half plane and has integer Fourier coefficients at $i\infty$.
\end{enumerate}
\end{conjecture}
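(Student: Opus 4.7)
The plan is to start from the Hurwitz formula \eqref{Hurwitz}. Summing over all admissible tuples $(C_1,\ldots,C_r)$ of $G_n$-conjugacy classes of elements of order dividing the respective $a_i$ yields
\[
|\Hom(\Gamma, G_n)| = |G_n|^{2g-1} \sum_{\chi \in \Irr(G_n)} \frac{\prod_{i=1}^r S_{a_i}(\chi)}{\chi(1)^{2g+r-2}},
\]
where $S_a(\chi) := \sum_{g \in G_n,\,g^a = 1}\chi(g)$. Since $|G_n|^{2g-1} \sim q^{(2g-1)n^2}$ and the predicted exponent is $(1-\chi(\Gamma))n^2 = (2g+r-1-\sum_i 1/a_i)n^2$, the remaining character sum must grow like $c_{q,n}\,q^{(r - \sum_i 1/a_i)n^2}$. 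The task is to extract this coefficient, together with its full Puiseux expansion in $q^{-1}$, in a form exhibiting the predicted $n$- and $q$-periodicity as well as its modular content.

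Next I would stratify $\Irr(G_n)$ via Lusztig--Jordan decomposition, indexing each $\chi$ by a semisimple class $s$ in the dual group $(G_n)^* \cong \GL_n(q)$ together with a unipotent character $\psi$ of $C(s)$. The denominator $\chi(1)^{2g+r-2}$ heavily penalizes high-degree characters, and $S_{a_i}(\chi)$ forces $\chi$ to be non-zero on some $a_i$-torsion element; these two constraints localize the leading contribution to characters with $\psi$ trivial and with $s^A=1$. For such $s$ the centralizer $C(s)$ is a Levi of the form $\prod_\alpha \GL_{m_\alpha}(q^{|\alpha|})$, with $\alpha$ ranging over Frobenius orbits on $\mu_A(\Fqb)$ and $n=\sum_\alpha |\alpha|\,m_\alpha$, so both $\chi(1)$ and $S_{a_i}(\chi)$ admit closed forms in terms of the orbit-type $(m_\alpha)$ via the Deligne--Lusztig character formula. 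The factor $(q-1)$ in $c_{q,n}$ should arise from orthogonality over the $q-1$ linear characters $\alpha\circ\det$, enforcing a determinant-product constraint. Uniform lower bounds on character degrees outside the minimal Lusztig series, of the kind available in modern character theory of finite reductive groups, ensure that all other contributions are smaller by a factor $q^{-cn}$ and do not affect the leading behavior.

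Collecting the surviving terms, $c_{q,n}$ assumes the shape
\[
(q-1)\,q^{e_{\Gamma,q,n}}\sum_{(m_\alpha)\,:\,\sum |\alpha|\,m_\alpha = n}\;\prod_\alpha F_\alpha(q^{-1};\,m_\alpha),
\]
a finite sum whose local factors are rational in $q^{-1/A}$. I would then recognize this sum as the $q$-expansion of a modular form by matching its Euler product in $q^{-1}$ against standard building blocks: Dedekind $\eta$-quotients, coming from partition generating functions implicit in the orders $|\GL_{m_\alpha}(q^{|\alpha|})|$, and $\vartheta$-series of half-integral weight, coming from Gauss sums in the Deligne--Lusztig character formula at torsion elements. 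Under the substitution $\tau = i\log q/(2\pi)$, one has $e^{2\pi i\tau} = q^{-1}$, so the $q^{-1}$-expansion of $c_{q,n}$ is precisely the Fourier expansion at $i\infty$ of the proposed form; holomorphicity on the upper half-plane and integrality of Fourier coefficients should then follow from the Euler-product structure. Finally, the $2A$-periodicity of $c_{q,n}$ in $n$ and the $A$-periodicity in $q$ would follow from the observation that both the Frobenius orbit-type data on $\mu_A$ and the sign corrections arising from parities in Lusztig's formulas depend only on $n\bmod 2A$ and $q\bmod A$.

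The main obstacle will be the final identification step: translating a combinatorial generating function produced by character sums into a genuine meromorphic modular form of half-integral weight on a congruence subgroup of $\SL_2(\Z)$. This requires pinning down the correct level (expected to divide $A^2$), the multiplier system responsible for half-integrality, and a closed-form $\eta/\vartheta$-expression for each periodic class; the modular transformation law is not automatic from the character-theoretic formula and must be verified essentially by hand against the proposed building blocks. A secondary difficulty is the uniform lower bound on non-minimal character degrees of $\GL_n(q)$ needed in the preceding step, which must be polynomial in $q$ with an $n$-dependent exponent sharp enough to dominate the denominator $\chi(1)^{2g+r-2}$ across the entire spectrum as $n$ grows.
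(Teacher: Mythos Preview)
The statement is a \emph{conjecture}, and the paper does not prove it in full: Theorem~\ref{Main} establishes it only for $\Gamma$ outside the finite list of Proposition~\ref{Genus 0} and for $q$ sufficiently large (depending on $\Gamma$). Any proposal should explain why these restrictions arise; yours does not.

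Your skeleton is broadly the paper's: Hurwitz formula, isolate the main term, identify it modularly. But the main term in the paper is the contribution of the $q-1$ \emph{linear} characters only, not a broader class of ``$\psi$ trivial and $s^A=1$'' characters; this contribution is exactly $(q-1)J_{q,n}(a_1,\ldots,a_r)\,|G_n|^{2g-1}$ (Theorem~\ref{Roots}). The modular identification (Corollary~\ref{J of ai}) then proceeds by expanding $J_{q,n}$ as a sum over eigenvalue-multiplicity vectors $(m_1,\ldots,m_a)$, recognizing $q^{-\sum_i (m_i-n/a)^2}$ as a lattice theta value and the centralizer orders $\prod_s |\GL_{m_s}(q^{l_s})|$ as an $\eta$-quotient. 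No Deligne--Lusztig character values or Gauss sums enter; the modular form is purely a statement about counting torsion elements with prescribed determinant product.

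The genuine gap is your handling of the non-linear characters. You invoke ``uniform lower bounds on character degrees'' to discard them, but degree bounds alone are insufficient in genus $\le 1$: the summand $\prod_i \chi(C_i)/\chi(1)^{2g+r-2}$ is not controlled by the trivial estimate $|\chi(C_i)|\le\chi(1)$ unless $2g\ge 2$. What is required is a bound on the character \emph{ratio} $|\chi(t)|/\chi(1)$ for semisimple $t$ with $t^{a_i}=1$, with exponent close to $\alpha(\CB_G(t))$. The paper's principal technical contribution (Theorem~\ref{appr-bound}) supplies exactly this, by combining the $\alpha(L)$-bound of \cite{BLST,TT} with the level theory of \cite{GLT}; a substantial numerical optimization (\S3) then shows that for $g=0$ these bounds suffice except for the $32$ groups listed in Proposition~\ref{Genus 0}. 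The exponent in Theorem~\ref{appr-bound} is only asymptotically sharp, which is the source of the ``$q$ sufficiently large'' restriction. Without this ingredient your argument cannot separate the linear from the non-linear contribution when $g\le 1$, and the full conjecture remains out of reach even by the paper's methods.
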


The problem of estimating the number of representations of a given Fuchsian group over a finite field
seems to have been first considered by Liebeck and Shalev \cite{LiSh1}.  There are a number of 
significant differences in emphasis between that paper and ours; $\Gamma$ need not be oriented in their paper, and the target of homomorphisms from $\Gamma$ could be a quasisimple group $G(q)$ instead of $\GL_n(q)$. 
They were primarily interested in the ``geometric'' direction, that is, $n$ fixed and $q\to \infty$..
A key limitation of their paper is that their method requires $g\ge 2$.

Under this hypothesis, they showed that the contribution in \eqref{Hurwitz} from non-linear characters is negligible, which reduces the problem of estimating 
$|\Hom(\Gamma,\GL_n(q))|$ to that of estimating the numbers $j_{q,n}(a_i)$ of elements $x\in \GL_n(q)$ satisfying $x^{a_i}=1$. They  gave an asymptotic formula for $j_{q,n}(a)$ when $n$ is fixed and $q\to \infty$, using work of Lawther \cite{Law}.  In the case $r=0$ (the surface group case) they proved that $|\Hom(\Gamma,\GL_n(q))|$ is asymptotic to $(q-1)|\GL_n(q)|^{2g-1}$.  If $\eta(z)$ is the Dedekind function,
then
$$\frac{|\GL_n(q)|}{q^{1/24}\eta(\frac{i\log q}{2\pi})} = q^{n^2}\prod_{i=n+1}^\infty (1-q^{-i})^{-1}\sim q^{n^2}.$$
Setting $e_{\Gamma,q,n} = \frac{2g-1}{24}$ and $f_{\Gamma,q,n} = \eta^{2g-1}$ for all $q,n$, we deduce Conjecture~\ref{Form} for surface groups.  In general, their
analysis depends crucially on the fact that for $g\ge 2$, the trivial upper bound on $|\chi(C_i)|$ is good enough to allow us to ignore non-linear characters of $G_n$.  This is certainly not the case
when $g=1$, let alone when $g=0$.
However, the new character bounds developed by Bezrukavnikov, Liebeck, Shalev, and Tiep \cite{BLST} give us hope of making progress even for $g=0$.

Using these bounds, Liebeck, Shalev, and Tiep proved \cite{LST} that for every $\Gamma$, if $q\equiv 1\pmod A$, then
$$|\Hom(\Gamma,\GL_n(q))| \le f(n) q^{(1-\chi(\Gamma))n^2+1}$$
where $f(n)$ does not depend on $q$.  This gives immediately an upper bound for the dimension of the representation variety $\Hom(\Gamma,\GL_{n,K})$ where $K$ is any field in which $A\neq 0$,
provided that $n$ is sufficiently large: 
$$\dim \Hom(\Gamma,\GL_{n,K}) \le (1-\chi(\Gamma))n^2+1.$$  
They proved also a lower bound on dimension:
$$\dim \Hom(\Gamma,\GL_{n,K})\ge (1-\chi(\Gamma))n^2 - \sum_i a_i.$$

In this paper, using ideas from \cite{BLST,TT}, we prove a new exponential character bound Theorem~\ref{appr-bound},
which applies to all semisimple elements and
which plays an essential role in the proof of the main theorems of this paper.  The reason we can prove Conjecture~\ref{Form} only when $q$
is sufficiently large is that the exponent in our bound only approaches its optimal value as $q\to\infty$.

Proposition~\ref{Genus 0} gives a list, consisting of thirty-one triangle groups and one quadrilateral group, 
where even for large $q$, our bounds are not strong enough to prove the conjecture.
When $\Gamma$ is not on this list and $q$ is sufficiently large and relatively prime to the $a_i$, $|\Hom(\Gamma,G_n)|$
behaves as predicted.

\begin{theorema}
\label{Main}
Let $\Gamma$ be a Fuchsian group which is not on the finite list of groups excluded by Proposition \ref{Genus 0}. 
Then Conjecture~\ref{Form} holds for $\Gamma$ for all sufficiently large prime powers $q$ prime to $A$.
\end{theorema}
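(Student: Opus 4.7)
The plan is to analyze the Hurwitz formula \eqref{Hurwitz} by partitioning both the outer sum over conjugacy-class tuples $(C_1,\ldots,C_r)$ and the inner character sum $\sum_{\chi} \chi(C_1)\cdots\chi(C_r)/\chi(1)^{2g+r-2}$, isolating a dominant contribution that accounts for the claimed asymptotic $q^{(1-\chi(\Gamma))n^2}$ and bounding the complementary error using the new exponential character bound Theorem~\ref{appr-bound}.

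For the outer sum, an element of $\GL_n(q)$ of order dividing $a_i$ is semisimple (using $\gcd(q,A)=1$) and so determined up to $G_n$-conjugacy by a Frobenius-stable multiset of eigenvalues in $\overline{\F}_q$. I would accordingly parameterize the tuples $(C_1,\ldots,C_r)$ by combinatorial data (compositions of $n$ labeled by Galois orbits of roots of unity of order dividing $a_i$), and express $|C_i|$ as $|G_n|/|C_{G_n}(x_i)|$ where the centralizers are products of smaller linear and unitary groups. The factor $|G_n|^{2g-1}\prod_i|C_i|/|C_{G_n}(x_i)|^{2g+r-2}$ combined with the identity $|\GL_m(q)| = q^{m^2}\prod_{j=1}^m(1-q^{-j})$ extracts the leading power $q^{(1-\chi(\Gamma))n^2}$, while the residual product of $(1-q^{-j})$-factors, reorganized as a partial product of Dedekind eta values at $\tau = \frac{i\log q}{2\pi}$, supplies the modular form of half-integral weight. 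Periodicity of $c_{q,n}$ modulo $2A$ reflects the fact that the combinatorial ``shape'' of the dominant centralizer configurations stabilizes once $n$ is large in each residue class mod $2A$.

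For the inner sum, the dominant contribution for each fixed tuple of semisimple classes should come from the characters in the rational Lusztig series attached to the corresponding semisimple parts—concretely, the characters whose Deligne--Lusztig virtual characters pair non-trivially with the class functions supported at these classes. Everything else is an error term, and this is exactly where Theorem~\ref{appr-bound} enters: for $q$ sufficiently large, the exponent in the character bound can be made arbitrarily close to its optimal value, which, by a Witten-zeta argument $\sum_\chi \chi(1)^{-s}<\infty$ for $s$ beyond a threshold, forces the error to be strictly smaller than the main term. The precise threshold is governed by $2g+r-2$ together with $\sum_i(1-1/a_i)$, and the triangle and quadrilateral groups of Proposition~\ref{Genus 0} are exactly those for which this threshold is too tight for the current bound to succeed uniformly; outside that list, one has the required slack.

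I expect the principal obstacle to be uniform control of the error terms at the boundary where Theorem~\ref{appr-bound} is barely strong enough: one must verify that summing over all non-dominant $(C_1,\ldots,C_r,\chi)$ contributions yields something of strictly smaller order than $q^{(1-\chi(\Gamma))n^2}$, and that this holds uniformly in $n$ with bounds depending only on $\Gamma$ and $q$. A secondary difficulty will be the modular form identification in part (b): after extracting the leading $(q-1)q^{e_{\Gamma,q,n}}$ factor, one must recognize the remaining $n$- and $q$-dependent product as the $q$-expansion of a half-integral weight meromorphic modular form with integer Fourier coefficients. This should follow from a combinatorial match between the centralizer products and quotients of eta functions indexed by the periodic data mod $2A$, but making the match canonical—and checking holomorphy on the upper half plane—will require a careful bookkeeping of which unitary and linear factors appear in each residue class.
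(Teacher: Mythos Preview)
Your identification of the main term in the character sum is wrong. The dominant contribution does not come from characters in Lusztig series attached to the semisimple elements $t_i$; it comes from the $q-1$ \emph{linear} characters of $\GL_n(q)$, i.e., the powers of $\det$. For these $\chi(1)=1$ and $\prod_i\chi(t_i)=1$ exactly when $\prod_i\det(t_i)=1$, so their total contribution to the Hurwitz sum is precisely $(q-1)|G_n|^{2g-1}J_{q,n}(a_1,\ldots,a_r)$ in the notation of \eqref{J set}. This is Theorem~\ref{Roots}, and Theorem~\ref{Main} is deduced from it via the asymptotic for $J_{q,n}$ computed in \S4. The character bound Theorem~\ref{appr-bound}, combined with the numerical inequality of Proposition~\ref{Genus 0} and the Witten-zeta estimate, is used only to show that \emph{all} non-linear characters together contribute a lower-order error, uniformly over all class tuples; no particular Lusztig series needs to be singled out.

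Your account of the modular form in part~(b) is also incomplete. Eta quotients alone are not enough: the half-integral weight form arises because the sum of conjugacy-class sizes over eigenvalue-multiplicity vectors $(m_1,\ldots,m_a)$, after extracting the leading power of $q$ via \eqref{uneven}, becomes a theta series $\sum_{\lambda'\in\lambda_n'+\Lambda}q^{-\lambda'\cdot\lambda'}$ over a shifted lattice in the hyperplane of coordinate-sum zero (Proposition~\ref{J nqk}). It is this theta function, multiplied by an eta quotient coming from the centralizer orders, that gives $f_n$. The $2A$-periodicity in $n$ is the periodicity of the lattice coset $\lambda_n'$, not a stabilization of centralizer shapes.
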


In particular, the theorem holds for all Fuchsian groups $\Gamma$ with Euler characteristic less than $-\frac 16$.

We deduce Theorem~\ref{Main} from an analogue of \cite[Theorem~1.2 (i)]{LiSh1}.
Let $J_{q,n}(a_1,\ldots,a_r)$ denote the cardinality of the set
\begin{equation}
\label{J set}
 \biggl\{(t_1,\ldots,t_r)\in \GL_n(q)\mid t_i^{a_i}=1\ \forall i,\ \prod_i \det(t_i)=1\biggr\}.
 \end{equation}

\begin{theorema}
\label{Roots}
If $\Gamma$ is not on the excluded list of Proposition \ref{Genus 0}, and $q$ is prime to $A$ and sufficiently large, then,
$$|\Hom(\Gamma,\GL_n(q))| = (1+o(1))(q-1) J_{q,n}(a_1,\ldots,a_r) |\GL_n(q)|^{2g-1},$$
where the term $o(1)$ does not depend on $q$.
\end{theorema}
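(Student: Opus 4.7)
The starting point is formula \eqref{Hurwitz}. Summing over all admissible $r$-tuples of conjugacy classes $(C_1,\ldots,C_r)$ and interchanging the two sums yields
$$|\Hom(\Gamma,G_n)| = |G_n|^{2g-1} \sum_{\chi\in\Irr(G_n)} \frac{1}{\chi(1)^{2g+r-2}} \prod_{i=1}^r \Bigl(\sum_{t\in G_n,\,t^{a_i}=1} \chi(t)\Bigr).$$
The plan is to split the outer sum into the contribution of the linear (one-dimensional) characters of $\GL_n(q)$, which will account exactly for the claimed main term, and the contribution of the non-linear characters, which I argue is asymptotically smaller by means of the exponential character bound Theorem~\ref{appr-bound}.

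For the linear part, every one-dimensional character of $\GL_n(q)$ has the form $\lambda\circ\det$ for some $\lambda\in\widehat{\F_q^\times}$, so $\chi(1)=1$ and the linear contribution is
$$|G_n|^{2g-1}\sum_{\lambda\in\widehat{\F_q^\times}}\sum_{(t_1,\ldots,t_r):\,t_i^{a_i}=1}\lambda\bigl(\textstyle\prod_i\det(t_i)\bigr).$$
Swapping the sums and using orthogonality of characters of $\F_q^\times$, only those $r$-tuples with $\prod_i\det(t_i)=1$ contribute, and each contributes $q-1$. This collapses the sum to exactly $(q-1)J_{q,n}(a_1,\ldots,a_r)|G_n|^{2g-1}$, precisely the claimed main term.

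For the non-linear characters, the task is to show that, after taking absolute values in the product $\prod_i\sum_{t^{a_i}=1}\chi(t)$ and summing over non-linear $\chi$ with the weight $\chi(1)^{-(2g+r-2)}$, the result is of strictly smaller order. Here one applies Theorem~\ref{appr-bound}, which for semisimple $t$ with $t^a=1$ yields a uniform bound of the shape $|\chi(t)|\le\chi(1)^{1-\kappa(t)}$ whose exponent approaches its theoretical optimum as $q\to\infty$. Stratifying the variety $\{t:t^{a_i}=1\}$ by centralizer type and combining with standard Witten-zeta-type estimates $\sum_{\chi\ne 1}\chi(1)^{-s}=O(1)$ valid for $s$ large enough uniformly in $n$, the non-linear contribution is bounded by the main term times a factor tending to $0$, provided that the exponent margin produced by Theorem~\ref{appr-bound} beats the relevant convergence abscissa.

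The central obstacle is precisely verifying this last inequality. Which Fuchsian groups fail it is a delicate numerical matter: the groups for which the exponent in Theorem~\ref{appr-bound} is too close to $1$ relative to $-\chi(\Gamma)$ are exactly the finite collection of triangle groups and one quadrilateral group identified in Proposition~\ref{Genus 0}. For every other $\Gamma$ one obtains a uniform $o(1)$ saving over the main term, with the implicit constant independent of $q$; the hypothesis that $q$ is sufficiently large is used precisely because the character bound's exponent reaches its limiting optimal value only in the limit $q\to\infty$.
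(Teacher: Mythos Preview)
Your proposal is correct and follows essentially the same route as the paper: split the Hurwitz sum into linear and non-linear characters, identify the linear contribution with $(q-1)J_{q,n}(a_1,\ldots,a_r)|G_n|^{2g-1}$ via orthogonality on $\widehat{\F_q^\times}$, and bound the non-linear part using Theorem~\ref{appr-bound} together with the Witten-zeta estimate (the paper's Lemma~\ref{zeta}).

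One point you gloss over that the paper makes explicit: Theorem~\ref{appr-bound} by itself is not enough to control the non-linear terms. The paper combines it with the elementary centralizer bound $|\chi(t)|\le |C_{G_n}(t)|^{1/2}$, taking the minimum of the two; this is precisely the function $f_{a,x}(\delta)$ appearing in Section~3, and Proposition~\ref{Combined bound} records this combination. Proposition~\ref{non-triv char} (and Lemma~\ref{higher non-triv char} for $g\ge1$) then packages the whole non-linear estimate as a single inequality with a $\chi(1)^{-\epsilon}$ saving, with the numerical verification deferred to Proposition~\ref{Genus 0}. Your phrase ``stratifying by centralizer type'' gestures at this, but the role of the centralizer bound as a \emph{second} character estimate, not merely a counting device for the stratification, is what makes the numerics of Section~3 work. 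Also, a small correction: the Witten zeta for $\GL_n(q)$ converges for every $s>0$ (Lemma~\ref{zeta}), so the issue is not ``$s$ large enough'' but rather securing \emph{any} positive exponent margin $\epsilon$, which is exactly what Proposition~\ref{Genus 0} delivers outside the excluded list.
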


Theorem~\ref{Main} allows us to compute the exact dimension of $\Hom(\Gamma,\GL_n)$ when $n$ is sufficiently large.  Given $\Gamma$ and $n$, we define $\sigma_{\Gamma,n}$ to be either $1$ or $-1$
according to the rule that it is $-1$ if and only if $a_i\in 2\Z$  implies $\frac n{a_i}\in\Z$, and 
$$\sum_{\{i\mid a_i\in 2\Z\}} \frac n{a_i} \in 1+2\Z.$$
Let $\{x\}$ denote the fractional part of $x$.

We have the following result:

\begin{theorema}
\label{Dimension}
Given $\Gamma$ which is not on the excluded list of Proposition \ref{Genus 0}, if $n$ is sufficiently large, and $K$ is any field of characteristic $p\ge 0$, if $p\nmid a_1$ for all $i$, we have
$$\dim \Hom(\Gamma,\GL_{n,K}) =\sigma_{\Gamma,n} + (1-\chi(\Gamma))n^2 - \sum_{i=1}^r a_i \Bigl\{\frac n{a_i}\Bigr\}\Bigl\{-\frac n{a_i}\Bigr\}.$$
In particular, we have
$$\dim \Hom(\Gamma,\GL_{n,K}) \ge -\frac12 + (1-\chi(\Gamma))n^2 - \sum_{i=1}^r \frac{a_i}4.$$
\end{theorema}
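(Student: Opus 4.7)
\medskip

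\noindent\textbf{Proof plan.} The strategy is to count $\F_q$-points via Theorem~\ref{Roots}, extract dimensions via Lang--Weil, and then carry out an explicit combinatorial calculation. Since the dimension of a finite-type scheme is preserved under base change to an algebraic closure, we may take $K=\overline\F_p$ (the characteristic-zero case follows by a standard spreading-out argument over $\Z[1/A]$ combined with the same combinatorial computation carried out geometrically). Choose $q=p^m$ with $q\equiv 1\pmod A$, possible since $p\nmid A$, and $q$ large enough for Theorem~\ref{Roots} to apply. Then
\[
|\Hom(\Gamma,\GL_n(q))| = (1+o(1))(q-1)\,J_{q,n}(a_1,\dots,a_r)\,|\GL_n(q)|^{2g-1}.
\]
Because $|\GL_n(q)|=q^{n^2}(1+O(q^{-1}))$ and every geometric component of the varieties in play is defined over $\F_q$ once $q\equiv 1\pmod A$, Lang--Weil applies on both sides, and passing to $\log_q$ as $q\to\infty$ yields
\[
\dim\Hom(\Gamma,\GL_{n,K}) = 1+(2g-1)n^2+\dim J,
\]
where $J\subset\GL_n^r$ is defined by $t_i^{a_i}=1$ for all $i$ and $\prod_i\det(t_i)=1$.

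\medskip

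To compute $\dim J$, set $V_i=\{t\in\GL_n : t^{a_i}=1\}$ and $V=\prod_iV_i$, so $J$ is the locus $\prod_i\det(t_i)=1$ in $V$. Since $p\nmid a_i$, each $V_i$ is a disjoint union of semisimple conjugacy classes indexed by multiplicity vectors $(n_j)_{j\in\Z/a_i\Z}$ of the eigenvalues $\zeta_i^j$ (for a fixed primitive $a_i$-th root of unity $\zeta_i$), with class dimension $n^2-\sum_j n_j^2$. Writing $n=a_ik_i+s_i$ with $0\le s_i<a_i$, this is minimized by taking $s_i$ multiplicities equal to $k_i+1$ and $a_i-s_i$ equal to $k_i$, giving $\dim V_i=(1-1/a_i)n^2-a_i\{n/a_i\}\{-n/a_i\}$. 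The top-dimensional components of $V$ are thus indexed by tuples $(S_1,\dots,S_r)$ with $S_i\subset\Z/a_i\Z$ of size $s_i$ (specifying which eigenvalues carry the higher multiplicity), and on such a component
\[
\det(t_i) = \zeta_i^{k_i\binom{a_i}{2}+\sum_{j\in S_i}j}.
\]

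\medskip

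For $0<s_i<a_i$, as $S_i$ varies $\sum_{j\in S_i}j$ runs over every residue modulo $a_i$, so $\det(t_i)$ sweeps through $\mu_{a_i}$; for $s_i=0$ (i.e., $a_i\,|\,n$), $\det(t_i)$ is fixed at $1$ if $a_i$ is odd and at $(-1)^{n/a_i}$ if $a_i$ is even. Hence on top-dim components $\prod_i\det(t_i)$ takes values in the coset $\epsilon\cdot H\subset\mu_A$, where $\epsilon=\prod_{i:\,a_i\text{ even},\,a_i|n}(-1)^{n/a_i}$ and $H=\langle\mu_{a_i}:s_i>0\rangle$. The equation $\prod\det(t_i)=1$ is solvable on a top-dim component iff $\epsilon=1$ or $-1\in H$, which unwinds to the condition $\sigma_{\Gamma,n}=1$; in that case $\dim J=\dim V$. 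When $\sigma_{\Gamma,n}=-1$, some $a_i$ is even with $a_i\,|\,n$, and we descend within $V_i$ to the next-dimensional class by replacing multiplicities $(k_i,k_i)$ at two eigenvalues $\zeta_i^{j_1},\zeta_i^{j_2}$ with $j_1-j_2\equiv a_i/2\pmod{a_i}$ by $(k_i+1,k_i-1)$: this drops $\dim V_i$ by exactly $2$ and multiplies $\det(t_i)$ by $-1$, yielding a component of $V$ of dimension $\dim V-2$ that lies in $J$. A direct verification that every non-top class in any $V_i$ has codimension at least $2$ (minimizing $\sum n_j^2$ shows the next smallest value exceeds the minimum by at least $2$) rules out components of codimension $1$ in $V$, so $\dim J=\dim V-2$. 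Combining, $\dim J=\dim V+\sigma_{\Gamma,n}-1$, and substitution produces the stated formula. The final inequality follows from $a_i\{n/a_i\}\{-n/a_i\}\le a_i/4$ together with the observation that $\sigma_{\Gamma,n}=-1$ forces some even $a_i$ to divide $n$, for which this term vanishes rather than attaining its maximum $a_i/4\ge 1/2$, exactly the extra $1/2$ needed. The main obstacle is the combinatorial matching of solvability of $\prod\det(t_i)=1$ on top-dim components with the condition $\sigma_{\Gamma,n}=1$, and the verification that no codimension-one component of $V$ exists.
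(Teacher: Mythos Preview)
Your proof is correct and follows the paper's route: Theorem~\ref{Roots} plus Lang--Weil to reduce to computing $\dim J$, then the same combinatorics the paper packages as Propositions~\ref{Small centralizers} and~\ref{Optimal tuple} (balanced multiplicity vectors, the range of $\sum_{j\in S_i} j$ modulo $a_i$, and the parity argument giving codimension $\ge 2$ for non-top classes), with constructibility/spreading out for characteristic zero. One minor point of phrasing: the Lang--Weil step should be stated as fixing a sufficiently large base $q\equiv 1\pmod A$ and passing to powers $q^m$ (as the paper does), since the $o(1)$ in Theorem~\ref{Roots} is uniform in $q$ but tends to zero only as $n\to\infty$.
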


Unfortunately, the letter $q$ has a standard meaning both for finite fields and for modular forms.  We use it only in the former sense, but we evaluate modular forms $f$ at $\frac{i\log q}{2\pi}$, which amounts to plugging $1/q$ into the $q$-expansion for $f$.  


\section{An asymptotic character bound}

The goal of this section is to prove an asymptotic version of the character bounds in \cite[Theorem 1.1]{BLST} and 
\cite[Theorem 1.9]{TT} when $G$ is a finite group of Lie-type $A$. We will 
achieve this by combining the approach of \cite{BLST} with the {\it character level} approach developed in \cite{GLT} to bound $|\chi(g)|$.

To this end, let us recall the approach of \cite{BLST}. Throughout this section, $q$ is a prime power, $\cG = \cG(\overline{\F_q})$ is the group of $\overline{\F_q}$-points of a connected reductive $\F_q$-group scheme. We assume $G = \cG(\F_q) = \cG^F$ is the finite group of $\F_q$-points, where $F : \cG \to \cG$ is the Frobenius endomorphism.

The main case of interest to us will be when the underlying group scheme is $\GL_n^{\epsilon}$, where $\epsilon \in \{+,-\}$ and we set $\GL_n^+ := \GL_n$, the general linear group, and $\GL_n^- := \GU_n$, the general unitary group. In this setting,
\begin{equation*}
\cG = \cG_n := \GL_n(\overline{\F_q})
\end{equation*}
is the general linear group of dimension $n>0$ and $F$ is either $F_q$ or $\sigma F_q$, where $F_q : \cG \to \cG$ is the standard Frobenius endomorphism and $\sigma : \cG \to \cG$ is the inverse transpose automorphism.

Suppose $L = \cL^F$, where $\cL < \cG$ is a proper $F$-stable Levi subgroup of $\cG$. If $g \in \cG^F$ is an element such that $\rC_{\cG}(g) \leqslant \cL$ then by \cite[Lemma 13.3]{TT} we have
\begin{equation}\label{eq-d20}
\chi(g)=\SR_{\cL}^{\cG}(\chi)(g) = \sum_{\eta \in \Irr(L)} \langle \eta,\SR_{\cL}^{\cG}(\chi)\rangle \eta(g),
\end{equation}
where $\SR_{\cL}^{\cG}$ denotes Deligne--Lusztig restriction. We also write $R_{\cL}^{\cG}$ for Deligne--Lusztig induction.

Following \cite[Theorem 1.1]{BLST}, we define the constant $\alpha(L)$ to be the maximum over non-trivial unipotent elements $u\in L$ of
\begin{equation*}
\frac{\dim u^{\cL}}{\dim u^{\cG}};
\end{equation*}
if $L$ contains no such elements we take $\alpha(L)=0$. From the proof of \cite[Theorem~1.1]{BLST}, see also \cite[\S2]{TT}, we get
\begin{equation}\label{eq-d21}
\abs{\eta(g)} \leqslant \eta(1) \leqslant B_1\left(\frac{q+1}{q-1}\right)^{D/2}\chi(1)^{\alpha(L_1)},
\end{equation}
for any $\eta \in \Irr(L)$ with $\langle \eta,\SR_{\cL}^{\cG}(\chi)\rangle \neq 0$, where $B_1 > 0$ is a constant that depends on $\cG^F$. Furthermore, $D=\dim v^\cG$, where $v^\cG = \cO^*_\chi$ is the {\it wave front set} of $\chi$, defined by work of Kawanaka \cite{K}, Lusztig \cite{Lus}, and Taylor \cite{Ta}.

If $f_{\eta} \in \QQ[X]$ is the degree polynomial of $\eta$, so that $\eta(1) = f_{\eta}(q)$, then the constant $B_1$ is chosen such that $B_1f_{\eta} \in \ZZ[X]$. When the underlying group scheme is $\GL_n^{\epsilon}$ we have $\cL^F$ is a direct product of groups $\GL_{n_i}^{\epsilon_i}(q)$. Therefore, in this case, the constant $B_1$ can be taken to be $1$ because the degree polynomial of any irreducible character of $\cL^F$ is already contained in $\ZZ[X]$.

Recall that $\SR_{\cL}^{\cG}(\chi)$ is a virtual character, unless $\cL$ is split. To bound $\abs{\chi(g)}$ it suffices, by the triangle inequality, \eqref{eq-d20}, and \eqref{eq-d21}, to bound
\begin{equation}\label{eq-d22}
\sum_{\eta \in \Irr(L)} \abs{\langle \eta,\SR_{\cL}^{\cG}(\chi)\rangle} \leqslant 
\sum_{\eta \in \Irr(L)} \langle \eta,\SR_{\cL}^{\cG}(\chi)\rangle^2
=
\langle \SR_{\cL}^{\cG}(\chi),\SR_{\cL}^{\cG}(\chi)\rangle,
\end{equation}
where $\langle-,-\rangle$ is the usual inner product on class functions.

We know from \cite[Proposition 2.2]{BLST} and its proof, as well as the arguments in \cite[\S13]{TT}, that \eqref{eq-d22} is always bounded above by
\begin{equation}\label{fn1}
(n!)^2.
\end{equation}
However, we can do significantly better if $n$ is large compared to both $q$ and the {\it true level}
\begin{equation*}
\tcl(\chi)=j
\end{equation*}
of $\chi$, as defined in \cite[Definition 1(i)]{GLT}. When $G = \GL_n(q)$ then $j$ is the smallest integer for which $\chi$ is a constituent of $\tau^j$, where $\tau(g)$ is the number of fixed points of $g$ acting on the natural module $V = \FF_q^n$ of $G$.

In the next subsections we give upper bounds for \eqref{eq-d22} that incorporate the true level of $\chi$.

\subsection{Elements with split centralizer in $\GL_n(q)$}
In this subsection we consider the group scheme $\GL_n$ so that
$$G = G_n := \GL_n(q).$$ 
Fix a proper split Levi subgroup $\cL$, and let
\begin{equation}\label{levi-l}
  L= \cL^F=\GL_{m_1}(q) \times \GL_{m_2}(q) \times \ldots \times \GL_{m_t}(q)\subset G,
\end{equation}  
where $m_i \in \Z_{\geq 1}$ and $\sum^t_{i=1}m_i = n$. In this case $\SR_{\cL}^{\cG}$ is just Harish-Chandra restriction. With $1 \leq j < n/2$ fixed, consider a split Levi subgroup $\cM$ and set
$$M= \cM^F \cong \GL_j(q) \times \GL_{n-j}(q)\subset G.$$
By \cite[Theorem 3.9(i)]{GLT}, $\tcl(\chi) = j$ implies that 
$\chi$ is an irreducible constituent of the Harish-Chandra induction
$$R^{\cG}_{\cM}\bigl(\alpha \boxtimes 1_{G_{n-j}}\bigr)$$
for a unique irreducible character $\alpha$ of $G_j$. Conjugating $\cM$ by a suitable element $g \in G$, we may assume that
$L$ and $M$ are block-diagonal subgroups in the same basis $(e_1,e_2, \ldots,e_n)$ of $V$. 

To bound \eqref{eq-d22},  it therefore suffices to bound 
$$\bigl\langle\SR^{\cG}_{\cL} R^{\cG}_{\cM}\bigl(\alpha \boxtimes 1_{G_{n-j}}\bigr),\SR^{\cG}_{\cL} R^{\cG}_{\cM}\bigl(\alpha \boxtimes 1_{G_{n-j}}\bigr)\bigr\rangle.$$	
By the Mackey formula for Harish-Chandra restriction and induction \cite[Theorem 1.14]{DF}, 
\begin{equation}\label{hc1}
  \SR^{\cG}_{\cL} R^{\cG}_{\cM}\bigl(\alpha \boxtimes 1_{G_{n-j}}\bigr) = \sum_{x \in L \backslash \cS(\cL,\cM)/M}R^{\cL}_{\cL \cap \tw x\cM}\SR^{\tw x\cM}_{\cL \cap \tw x \cM}
    \bigl( (\alpha \boxtimes 1_{G_{n-j}})^x\bigr),
\end{equation}    
where $\cS(\cL,\cM)$ is the set of elements $y \in G$ such that $\cL \cap \tw y\cM$ contains a maximal torus of $\cG$, and the summation runs through
the $(L,M)$ double cosets of this set.

For our pair of split Levi subgroups $(\cL,\cM)$, there is an explicit description of $L \backslash \cS(\cL,\cM)/M$, as described in 
\cite[2.2c]{BDK}. Embed the symmetric group $\SSS_n$ in $G_n$ via permutation matrices, and consider the Young subgroups 
$$\SSS_\lambda = \SSS_{m_1} \times \SSS_{m_2} \times \ldots \times \SSS_{m_t},~~\SSS_\mu = \SSS_j \times \SSS_{n-j}$$
of the embedded $\SSS_n$. Then in \eqref{hc1} we can just choose $x$ as representatives of the set 
$\SSS_\lambda \backslash \SSS_n/\SSS_\mu$, one for each double coset. The set of double cosets $\SSS_\lambda \backslash \SSS_n/\SSS_\mu$
is in bijection with $\SSS_\lambda$-orbits on the set of $\SSS_n/\SSS_\mu$, which may be identified with the set of $j$-subsets of 
$\{1,2, \ldots,n\}$. Hence each such double coset can be labeled uniquely by a $t$-tuple
\begin{equation}\label{hc-k}
  \kappa=(k_1, k_2, \ldots,k_t),~~ 0 \leq k_i \leq m_i,~\sum^t_{i=1}k_i=j.
\end{equation}  
Correspondingly, we can choose $x=x_\kappa$ to be the element of $G$ that sends the first $j$ basis vectors $e_1, \ldots,e_j$ of $V$ 
to 
$$e_1, \ldots,e_{k_1},e_{m_1+1},e_{m_1+2}, \ldots,e_{m_1+k_2}, \ldots,e_{m_1+\ldots+m_{t-1}+1}, \ldots,e_{m_1+m_2 + \ldots +m_{t-1}+k_t}$$  
in the increasing order of the subscripts, and sends the last $n-j$ basis vectors $e_{j+1}, \ldots,e_n$ to the remaining $n-j$ basis vectors, 
again in the increasing order of the subscripts. We will say that $x_\kappa(e_i) = e_{x_\kappa(i)}$, $1 \leq i \leq n$.

For the reader's convenience, let us give a justification for this statement
in the case $q \geq 3$. Suppose $y \in G$ is such that $\cL \cap \tw y\cM$ contains a maximal torus $\cT$ of $\cG$. 
Then $\cT$ is a maximal torus of $(\cL \cap \tw y\cM)^\circ$ which is $F$-stable and connected. By the Lang--Steinberg theorem,
conjugating $\cT$ suitably, we may assume that it is $F$-stable. Then 
$$T:= \cT^F \cong C_{q^{a_1}-1} \times C_{q^{a_2}-1} \times \ldots \times C_{q^{a_s}-1}$$
for some integers $a_1, a_2, \ldots,a_s \geq 1$. Since $q \geq 3$, all cyclic direct factors in this decomposition are nontrivial, and hence 
$V =\F_q^n$ is a direct sum of $s$ simple $\F_qT$-modules $W_1, \ldots,W_s$ of dimension $a_1, a_2, \ldots,a_s$, which are pairwise non-isomorphic (indeed, they 
have pairwise distinct kernels). On the other hand, the $\F_qL$-module $V$ decomposes as 
the sum $\oplus^t_{i=1}V_i$ of $\F_qL$-modules, where 
$$V_1:= \langle e_1, \ldots,e_{m_1} \rangle_{\F_q},~V_2:= \langle e_{m_1+1}, \ldots,e_{m_1+m_2} \rangle_{\F_q}, \ldots,
    V_t:= \langle e_{m_1+ \ldots +m_{t-1}+1}, \ldots,e_{n} \rangle_{\F_q}.$$
As $T \leq L$, each $V_i$ is a direct sum of some of these $W_l$, $1 \leq l \leq s$. 
Similarly, since $V = \F_q^j \oplus \F_q^{n-j}$ as an $\F_q\tw yM$-module and 
$T \leq \tw yM$, each of $\F_q^j$ and $\F_q^{n-j}$ is a direct sum of some of these $W_l$. Using the left multiplcation by $L$ and right multiplication by
$M$ if needed, we may assume that $\F_q^j$ is spanned by 
$$e_1, \ldots,e_{k_1},e_{m_1+1},e_{m_1+2}, \ldots,e_{m_1+k_2}, \ldots,e_{m_1+\ldots+m_{t-1}+1}, \ldots,e_{m_1+m_2 + \ldots +m_{t-1}+k_t}$$
($k_i$ first vectors in the indicated basis of $V_i$ for each $1 \leq i \leq t$), and  $\F_q^{n-j}$ is spanned by the remaining $n-j$ basis vectors.

It is well known (and can be proved by an easy induction on $t \geq 1$) that the total number $N$ of $t$-tuples $\kappa$ in \eqref{hc-k} is 
\begin{equation}\label{hc-n}
  N= \binom{j+t-1}{j} = t \cdot \frac{t+1}{2} \cdots \frac{t+j-1}{j} \leq t^j \leq n^j
\end{equation}
since $t \leq n$. For each such $\kappa$, $x=x_\kappa$ sends $e_i$ to $e_{x(i)}$, and we can write
$$\tw xM = xMx^{-1} = \GL\bigl(\langle e_{x(1)}, \ldots,e_{x(j)}\rangle_{\F_q}\bigr) \times  \GL\bigl(\langle e_{x(j+1)}, \ldots,e_{x(n)}\rangle_{\F_q}\bigr)
\cong \GL_j(q) \times \GL_{n-j}(q)$$
Now, $L \cap \tw xM$ fixes each of the subspaces 
$$\langle e_1, \ldots,e_{m_1}\rangle_{\F_q} \cap \langle e_{x(1)}, \ldots,e_{x(j)}\rangle_{\F_q} = \langle e_{1}, \ldots,e_{k_1}\rangle_{\F_q}$$
and 
$$\langle e_1, \ldots,e_{m_1}\rangle_{\F_q} \cap \langle e_{x(j+1)}, \ldots,e_{x(n)}\rangle_{\F_q} = \langle e_{k_1+1}, \ldots,e_{m_1}\rangle_{\F_q}$$
of $V_1$, and similarly for $V_i$ with $2 \leq i \leq t$. It follows that
$$L \cap \tw xM = \prod^t_{i=1}\bigl(K_i \times M_i\bigr),$$
where for each $1 \leq i \leq t$, $K_i \cong \GL_{k_i}(q)$ is contained in the $\GL_j(q)$-factor of $\tw xM$, and 
$M_i \cong \GL_{m_i-k_i}(q)$ is contained in the $\GL_{n-j}(q)$-factor of $\tw xM$. Moreover, 
 $\prod^t_{i=1}K_i$ is a split Levi subgroup of $\GL_j(q)$, and $\prod^t_{i=1}M_i$ is a split Levi subgroup of $\GL_{n-j}(q)$. Now, applying 
 \cite[Lemma 2.7(i)]{GKNT} twice, we obtain
 $$\SR^{\tw x\cM}_{\cL \cap \tw x \cM}
    \bigl( (\alpha \boxtimes 1_{G_{n-j}})^x\bigr) = \SR^{\cG_j}_{K_1 \times \ldots \times K_t}(\alpha^x) \boxtimes 1_{M_1 \times \ldots \times M_t}.$$   
Recall that $\alpha^x$ is an irreducible character of $\GL_j(q)$. So, by \eqref{fn1}, the total sum of multiplicities of 
irreducible constituents $\beta$ in 
$\SR^{\tw x\cM}_{\cL \cap \tw x \cM}\bigl( (\alpha \boxtimes 1_{G_{n-j}})^x\bigr)$ is at most 
\begin{equation}\label{fn2}
  (j!)^2.
\end{equation}  
Consider any such irreducible constituent 
$$\beta = \alpha_1 \boxtimes \alpha_2 \boxtimes \ldots \boxtimes \alpha_t \boxtimes 1_{M_1 \times \ldots \times M_t}.$$
By \cite[Lemma 2.5(ii)]{GLT},
$$R^{\cL}_{\cL \cap \tw x \cM}(\beta) = \boxtimes^t_{i=1}R^{\cG_{m_i}}_{\cG_{k_i} \times \cG_{m_i-k_i}}\bigl(\alpha_i \boxtimes 1_{G_{m_i-k_i}}\bigr).$$
Let $\tau_{q,n}$ denote the permutation character of $G_n$ on $\F_q^n$, see \cite[(3.1)]{GLT}. Then the character
$$\gamma_i:=R^{\cG_{m_i}}_{\cG_{k_i} \times \cG_{m_i-k_i}}\bigl(\alpha_i \boxtimes 1_{G_{m_i-k_i}}\bigr)$$ 
is contained in $(\tau_{m_i,q})^{k_i}$ by
\cite[Proposition 3.2]{GLT}. 
If $k_i=0$, then the total number $N(\gamma_i)$ of multiplicities of irreducible constituents of $\gamma_i$ is 
$1$. If $1 \leq k_i \leq m_i/2$, then 
$$N(\gamma_i) \leq \langle \gamma_i,\gamma_i\rangle \leq \langle \tau_{m_i,q}^{k_i},\tau_{m_i,q}^{k_i}\rangle = 
   \langle \tau_{m_i,q}^{2k_i},1_{G_{m_i}}\rangle,$$
which is the number of $G_{m_i}$-orbits on ordered $2k_i$-tuples of vectors in $\F_q^{m_i}$, and hence is at most 
$8q^{k_i^2} \leq q^{4k_i^2}$ by \cite[Lemma 2.4]{GLT}. Suppose $m_i/2 < k_i \leq m_i$. Then $\gamma_i$ is a character of degree
at most $q^{m_ik_i} < q^{2k_i^2}$, and hence $N(\gamma_i) < q^{2k_i^2}$. Thus in all cases we have
$$N(\gamma_i) \leq q^{4k_i^2}.$$  
It follows that the total number $N(\beta)$ of multiplicities of irreducible constituents of
$$R^{\cL}_{\cL \cap \tw x \cM}(\beta) = \boxtimes^t_{i=1}\beta_i$$
is at most 
$$q^{4\sum^t_{i=1}k_i^2} \leq q^{4(\sum^t_{i=1}k_i)^2} = q^{4j^2}.$$
Combining this with \eqref{hc-n} and \eqref{fn2}, we have proved

\begin{prop}\label{hc-restr}
Let $G = \cG^F =\GL_n(q)$ and let $\chi$ be any irreducible character $G$ of true level $j \leq n/2$. If $L = \cL^F$ is a proper split Levi subgroup of 
$G$, then the total number $A$ of irreducible constituents (counting multiplicities) of the Harish-Chandra restriction 
$\SR^{\cG}_{\cL}(\chi)$ is at most $n^j(j!)^2q^{4j^2}$.
\end{prop}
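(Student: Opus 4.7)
The plan is to reduce the multiplicity count to a controlled Mackey decomposition and then bound each piece with the tools already developed above. Since $\tcl(\chi) = j$, \cite[Theorem 3.9(i)]{GLT} exhibits $\chi$ as a constituent of $R^{\cG}_{\cM}(\alpha \boxtimes 1_{G_{n-j}})$ for some irreducible $\alpha$ of $\GL_j(q)$ and a split Levi with $M \cong \GL_j(q) \times \GL_{n-j}(q)$. After conjugating $\cM$ so that it is block-diagonal in the same basis as $\cL$, it suffices to bound the total multiplicity in $\SR^{\cG}_{\cL} R^{\cG}_{\cM}(\alpha \boxtimes 1_{G_{n-j}})$, which I would expand via the Harish-Chandra Mackey formula \cite[Theorem 1.14]{DF}.

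The resulting sum runs over $L \backslash \cS(\cL,\cM) / M$, parametrized by tuples $\kappa = (k_1,\ldots,k_t)$ with $0 \leq k_i \leq m_i$ and $\sum_i k_i = j$, yielding at most $\binom{j+t-1}{j} \leq n^j$ cosets. For each $\kappa$, the intersected Levi $L \cap \tw{x_\kappa}\cM$ decomposes block-diagonally as $\prod_i (\GL_{k_i}(q) \times \GL_{m_i - k_i}(q))$, compatibly with both $\cL$ and $\tw{x_\kappa}\cM$; two applications of \cite[Lemma 2.7(i)]{GKNT} collapse the corresponding restriction to $\SR^{\cG_j}_{\prod_i K_i}(\alpha^{x_\kappa}) \boxtimes 1_{\prod_i M_i}$, which has at most $(j!)^2$ constituents by the universal $\GL_j$ bound recorded in \eqref{fn1}. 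This produces the second factor in the claimed estimate.

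Finally, for each constituent $\beta = \bigl(\boxtimes_i \alpha_i\bigr) \boxtimes 1$, \cite[Lemma 2.5(ii)]{GLT} factors $R^{\cL}_{\cL \cap \tw{x_\kappa}\cM}(\beta)$ across the $t$ blocks, and each factor $R^{\cG_{m_i}}_{\cG_{k_i} \times \cG_{m_i - k_i}}(\alpha_i \boxtimes 1_{G_{m_i - k_i}})$ is contained in $\tau_{m_i,q}^{k_i}$ by \cite[Proposition 3.2]{GLT}. Bounding its number of constituents via $\langle \tau_{m_i,q}^{2k_i}, 1_{G_{m_i}} \rangle \leq 8 q^{k_i^2} \leq q^{4k_i^2}$ when $k_i \leq m_i/2$ (and by a trivial degree bound otherwise), and using $\sum_i k_i^2 \leq j^2$, contributes the factor $q^{4j^2}$; multiplying the three estimates then gives the bound. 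The delicate step I expect to dominate the work is the Mackey expansion itself: verifying that the intersection $L \cap \tw{x_\kappa}\cM$ really has the product structure claimed, so that the Deligne--Lusztig restriction collapses to a Harish-Chandra restriction on each block and the coarse $(j!)^2$ estimate can be applied to a genuinely $j$-sized group rather than to all of $G$.
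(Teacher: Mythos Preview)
Your proposal is correct and follows essentially the same argument as the paper: reduce to $R^{\cG}_{\cM}(\alpha\boxtimes 1)$ via \cite[Theorem~3.9(i)]{GLT}, expand $\SR^{\cG}_{\cL}R^{\cG}_{\cM}$ by the Mackey formula indexed by the tuples $\kappa$, and bound the three contributions by $n^j$, $(j!)^2$, and $q^{4j^2}$ exactly as you describe. The paper also carries out in detail the verification you flag as delicate, namely the explicit product decomposition of $L\cap\tw{x_\kappa}M$ and the identification of its factors inside the two blocks of $\tw{x_\kappa}M$.
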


\begin{cor}\label{hc-bound}
Let $G = \GL_n(q)$ and let $g \in G$ be any element
such that $\CB_G(g)$ is contained in a split Levi subgroup $\cL$ of $G$. Let $\chi \in \Irr(G)$ be of true level $j \leq n/2$, and let 
$D=\dim v^\cG$, with $v^\cG = \cO^*_\chi$ being the wave front set of $\chi$. Then 
$$|\chi(g)| \leq n^j(j!)^2q^{4j^2}\biggl( \frac{q+1}{q-1} \biggr)^{D/2}\chi(1)^{\alpha(L)}.$$
\end{cor}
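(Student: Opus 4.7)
The plan is to assemble three ingredients already established in this section: the identity \eqref{eq-d20}, the pointwise bound \eqref{eq-d21}, and the multiplicity count of Proposition~\ref{hc-restr}. Since the hypothesis $\CB_G(g) \leq \cL$ is in force, \eqref{eq-d20} gives
$$\chi(g) = \sum_{\eta \in \Irr(L)} \langle \eta,\SR_\cL^\cG(\chi)\rangle\,\eta(g),$$
so the triangle inequality yields
$$|\chi(g)| \leq \Bigl(\sum_{\eta \in \Irr(L)} |\langle \eta,\SR_\cL^\cG(\chi)\rangle|\Bigr) \cdot \max_{\eta} |\eta(g)|,$$
the maximum ranging over $\eta$ with $\langle \eta,\SR_\cL^\cG(\chi)\rangle \neq 0$.

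For each such $\eta$, the bound \eqref{eq-d21} gives $|\eta(g)| \leq \eta(1) \leq B_1 ((q+1)/(q-1))^{D/2} \chi(1)^{\alpha(L)}$. As observed immediately after \eqref{eq-d21}, when the underlying group scheme is $\GL_n$ the constant $B_1$ may be taken to equal $1$, so the maximum contributes exactly $((q+1)/(q-1))^{D/2} \chi(1)^{\alpha(L)}$.

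It remains to bound the sum of the absolute values of the multiplicities. Because $\cL$ is split, Deligne--Lusztig restriction $\SR_\cL^\cG$ coincides with Harish-Chandra restriction and therefore sends genuine characters to genuine characters; in particular every multiplicity $\langle \eta,\SR_\cL^\cG(\chi)\rangle$ is a non-negative integer. Consequently $\sum_\eta |\langle \eta,\SR_\cL^\cG(\chi)\rangle|$ is exactly the total number of irreducible constituents of $\SR_\cL^\cG(\chi)$ counted with multiplicity, which by Proposition~\ref{hc-restr} is at most $n^j(j!)^2 q^{4j^2}$. Multiplying the two estimates gives the claimed inequality.

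Because the corollary is a direct assembly of results already in hand, there is no substantive obstacle; the only subtlety is the non-negativity of the multiplicities, which is what permits us to drop absolute values in the sum and invoke Proposition~\ref{hc-restr} without further work. One should also verify that Proposition~\ref{hc-restr} applies in both cases $j \leq n/2$ (it is stated with exactly this hypothesis), which matches the assumption of the corollary.
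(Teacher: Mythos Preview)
Your proof is correct and follows essentially the same route as the paper's: combine \eqref{eq-d20}, \eqref{eq-d21} (with $B_1=1$), and Proposition~\ref{hc-restr}. Your explicit observation that the split hypothesis makes the multiplicities $\langle \eta,\SR_\cL^\cG(\chi)\rangle$ non-negative, so that Proposition~\ref{hc-restr} directly bounds $\sum_\eta |\langle \eta,\SR_\cL^\cG(\chi)\rangle|$, is exactly the point the paper has in mind when it invokes \eqref{eq-d22} alongside Proposition~\ref{hc-restr}.
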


\begin{proof}
As mentioned above, in our case the constant $B_1$ in \eqref{eq-d21} can be taken to be $1$. We now combine \eqref{eq-d20}, \eqref{eq-d21}, and \eqref{eq-d22} with Proposition \ref{hc-restr}.
\end{proof}

\subsection{The general case}
For semisimple elements whose centraliser is a non-split Levi subgroup, the bound in Corollary~\ref{hc-bound} can be very poor; 
for instance, it says nothing at all
about character values for elements in anisotropic tori.  
However, the following result is almost as good for all semisimple elements
as Corollary~\ref{hc-bound} is in the split case, and moreover works for both $\GL_n$ and $\GU_n$:

\begin{thm}\label{l-bound}
Let $G = \GL_n^{\epsilon}(q)$ and let $g \in G$ be any element 
such that $\CB_\cG(g)$ is contained in a proper $F$-stable Levi subgroup $\cL_1$. Define $L_1:=\cL_1^F$. Let $\chi \in \Irr(G)$ be of true level $j$, $0 \leq j \leq n$, and let 
$D=\dim v^\cG$, with $v^\cG = \cO^*_\chi$ being the wave front set of $\chi$. Then 
$$|\chi(g)| \leq n^{3j}\biggl( \frac{q+1}{q-1} \biggr)^{D/2}\chi(1)^{\alpha(L_1)}.$$
\end{thm}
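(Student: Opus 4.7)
The plan is to mirror the three-step argument leading to Corollary~\ref{hc-bound}, but with Harish-Chandra restriction and induction replaced throughout by Deligne--Lusztig restriction $\SR$ and induction $R$, so as to accommodate both non-split Levi subgroups $\cL_1$ and the twisted type $\GU_n$. The identities \eqref{eq-d20}, \eqref{eq-d21}, and \eqref{eq-d22} remain valid in this generality (with $B_1=1$, since degree polynomials of irreducible characters of $\cL_1^F$ still lie in $\ZZ[X]$ in type $A$), so it suffices to show that the sum of multiplicities of the irreducible constituents of $\SR^\cG_{\cL_1}(\chi)$ is at most $n^{3j}$.

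By the level characterization \cite[Theorem 3.9]{GLT} (and its unitary analogue), the hypothesis $\tcl(\chi)=j$ yields an $F$-stable Levi $\cM$ of $\cG$ of type $\GL_j^\epsilon \times \GL_{n-j}^\epsilon$, together with a unique $\alpha \in \Irr(\GL_j^\epsilon(q))$, such that $\chi$ is a constituent of $R^\cG_\cM(\alpha \boxtimes 1_{G_{n-j}})$. The inner product in \eqref{eq-d22} is therefore dominated by
$$\langle \SR^\cG_{\cL_1} R^\cG_\cM(\alpha \boxtimes 1_{G_{n-j}}),\ \SR^\cG_{\cL_1} R^\cG_\cM(\alpha \boxtimes 1_{G_{n-j}}) \rangle.$$
Applying the Deligne--Lusztig Mackey formula of Bonnaf\'e--Michel (valid in type $A$ with no restriction on $q$) lets one expand $\SR^\cG_{\cL_1} R^\cG_\cM$ as a sum over the $(L_1,M)$-double cosets of those $x \in G$ for which $\cL_1 \cap \tw x \cM$ contains a maximal torus of $\cG$, with each term of the form $R^{\cL_1}_{\cL_1 \cap \tw x \cM}\, \SR^{\tw x \cM}_{\cL_1 \cap \tw x \cM}((\alpha \boxtimes 1_{G_{n-j}})^x)$.

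For each fixed double coset, the inner restriction acts by restricting $\alpha^x$ on the $\GL_j^\epsilon$ factor and trivially on the $\GL_{n-j}^\epsilon$ factor, producing a number of irreducible constituents $\beta$ which is polynomial in $n$ of degree at most $j$ via the relative-Weyl-group bound for Deligne--Lusztig restriction. Each such $\beta$ has true level at most $j$ in the ambient factor of $\cL_1$, so the number of constituents of $R^{\cL_1}_{\cL_1 \cap \tw x \cM}(\beta)$ is again polynomial in $n$ of degree at most $j$, uniformly in $q$, via the level-theoretic description of constituents in \cite[Proposition 3.2]{GLT} and its unitary analogue. Crucially, the use of true level here bypasses the crude $q^{4j^2}$ factor appearing in the proof of Proposition~\ref{hc-restr}.

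The main obstacle lies in two places: (a) enumerating the double cosets $L_1 \backslash \cS(\cL_1, \cM)/M$ uniformly in the non-split setting, where $F$-twists and relative Weyl groups must replace the direct Young-subgroup calculation \eqref{hc-n}; and (b) establishing the $q$-free multiplicity bound for the outer induction, replacing the generic inner-product estimate $\langle \tau_{m_i,q}^{k_i}, \tau_{m_i,q}^{k_i}\rangle$ used in Proposition~\ref{hc-restr}. Both counts should be polynomial in $n$ of degree at most $j$, by parametrising the relevant $W(\cL_1)^F$-orbits and the level-$\le j$ characters respectively. Multiplying the three resulting polynomial-in-$n$ factors gives the total bound $n^{3j}$ on the multiplicity sum, which, combined with \eqref{eq-d20}, \eqref{eq-d21}, and $B_1=1$, yields the asserted inequality for $|\chi(g)|$.
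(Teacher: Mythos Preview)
Your plan has a genuine gap at the step where you claim the inner product $\langle \SR^\cG_{\cL_1}(\chi), \SR^\cG_{\cL_1}(\chi)\rangle$ is ``dominated by'' $\langle \SR^\cG_{\cL_1}R^\cG_\cM(\alpha\boxtimes 1),\,\SR^\cG_{\cL_1}R^\cG_\cM(\alpha\boxtimes 1)\rangle$. When $\cL_1$ is non-split, $\SR^\cG_{\cL_1}$ sends true characters to virtual characters, so even if $\chi$ is an honest constituent of a true character $\theta$ one cannot conclude $\langle\SR\chi,\SR\chi\rangle \le \langle\SR\theta,\SR\theta\rangle$: the cross-terms may be negative. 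Worse, in the unitary case the Levi $\cM \cong \GU_j\times\GU_{n-j}$ is itself not the Levi of any $F$-stable parabolic of $\GU_n$, so $R^\cG_\cM(\alpha\boxtimes 1)$ is already only a virtual character and ``$\chi$ is a constituent'' must be read as $\langle\chi,R^\cG_\cM(\alpha\boxtimes 1)\rangle \neq 0$. Your obstacle~(b) is likewise unresolved: knowing that constituents have true level $\le k_i$ does not give a $q$-free multiplicity bound, since the number of level-$\le k_i$ irreducibles of $\GL_{m_i}^\epsilon(q)$ still grows with $q$.

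The paper's route is genuinely different and sidesteps all of this. Rather than running Mackey at the group level, it passes to the Weyl group via Lusztig's almost characters: in type $A$ one has $\chi = \pm\cR_s^{\cG F}(\tilde\psi)$ exactly, for a semisimple $s\in\cT$ and some $\tilde\psi\in\Irr(\rC_{WF}(s))$, and the level hypothesis becomes the statement that $\psi$ is a constituent of $\Ind_{\rC_{W_1}(s)W_2}^{\rC_W(s)}(\phi\boxtimes 1)$, an ordinary induction of genuine characters of subgroups of $\SSS_n$. A coset Mackey formula for $\SR^{\cG F}_{\cL w}\circ\cR_s^{\cG F}$ then writes $\SR_{\cL_1}(\chi)$ as a sum over $H\backslash W/\rC_W(s)$ of almost characters $\cR_{{}^zs}^{\cL w}(\Res\,{}^z\tilde\psi)$, which are pairwise orthogonal by disjointness of Lusztig series. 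The three factors of $n^j$ now fall out of pure $\SSS_n$-combinatorics: at most $|W/W_2|\le n^j$ double cosets, and each summand has norm at most $\psi(1)^2 \le |W/W_2|^2 \le n^{2j}$ via a coset-restriction bound. Because the entire multiplicity count lives in the Weyl group, $q$ never enters, which is precisely what your group-level argument cannot deliver.
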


To prove this result we will use Deligne--Lusztig theory. However, before developing the necessary results about Deligne--Lusztig characters we recall a few facts about cosets. Assume $\mathfrak{G}$ is a group. The set of conjugacy classes of $\mathfrak{G}$ will be denoted by $\Cl(\mathfrak{G})$. A \emph{subcoset} of $\mathfrak{G}$ is a coset $Hw \subseteq \rN_{\mathfrak{G}}(H)$ of a subgroup $H \leqslant \mathfrak{G}$. Given any subsets $X,Y \subseteq \mathfrak{G}$ we define 
$$\rN_X(Y) := X \cap \rN_{\mathfrak{G}}(Y),~~\rC_X(Y) := X \cap \rC_{\mathfrak{G}}(Y),$$ 
where $\rN_{\mathfrak{G}}(Y)$ and $\rC_{\mathfrak{G}}(Y)$ are the usual normalizer and centralizer of $Y$. As usual 
$$XY := \{xy \mid x \in X \mbox{ and }y \in Y\}.$$

Now assume that $W\gamma \subseteq \mathfrak{G}$ is a finite subcoset. We denote by $\cf(W\gamma)$ the space of $W$-invariant functions $f : W\gamma \to \CC$, which we call class functions. This space has an inner product $\langle -,-\rangle$ and if $Hw \subseteq W\gamma$ is a subcoset then we have induction $\Ind_{Hw}^{W\gamma} : \cf(Hw) \to \cf(W\gamma)$ and restriction maps $\Res_{Hw}^{W\gamma} : \cf(W\gamma) \to \cf(Hw)$ which satisfy Frobenius reciprocity with respect to $\langle-,-\rangle$, see \cite[\S1.C]{Bon} or \cite[\S4]{TT}.

The function $\pi_w = \pi_w^{W\gamma}$ taking the value $\abs{\rC_W(w)}$ at any $W$-conjugate of $w \in W\gamma$ and the value $0$ otherwise is clearly contained in $\cf(W\gamma)$. We will need the following elementary calculation.

\begin{lem}\label{lem:res-coset}
For any subcoset $Hw \subseteq W\gamma$ and $x \in W\gamma$ we have
\begin{equation*}
\Res_{Hw}^{W\gamma}(\pi_x^{W\gamma})
=
\sum_{\substack{z \in H\backslash W/\rC_W(x)\\ {}^zx \in Hw}} \frac{\abs{\rC_W(x)}}{\abs{\rC_H({}^zx)}}\pi_{{}^zx}^{Hw}
=
\sum_{\substack{z \in H\backslash W\\ {}^zx \in Hw}} \pi_{{}^zx}^{Hw}
\end{equation*}
\end{lem}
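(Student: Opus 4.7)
The plan is to rewrite both sides of the first equality as functions on $Hw$ and compare them orbit by orbit. By definition, $\pi_x^{W\gamma}$ takes the constant value $\abs{\rC_W(x)}$ on the $W$-conjugacy class $x^W \subseteq W\gamma$ and vanishes elsewhere; thus $\Res_{Hw}^{W\gamma}(\pi_x^{W\gamma})$ equals $\abs{\rC_W(x)}$ times the characteristic function of $x^W \cap Hw$. Since $w \in \rN_W(H)$, the coset $Hw$ is stable under conjugation by $H$, so $x^W \cap Hw$ decomposes as a disjoint union of $H$-conjugacy classes.

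To enumerate these classes, I would use the standard bijection between $H$-orbits on $x^W$ and double cosets $H\backslash W/\rC_W(x)$: the orbit corresponding to the class of $z$ is $({}^zx)^H$. If $z' = hzc$ with $h \in H$ and $c \in \rC_W(x)$, then ${}^{z'}x = h({}^zx)h^{-1}$, so whether ${}^zx$ lies in the $H$-stable set $Hw$ depends only on the double coset; this yields
$$x^W \cap Hw = \bigsqcup_{\substack{z \in H\backslash W/\rC_W(x)\\ {}^zx \in Hw}} ({}^zx)^H.$$
On each class $({}^zx)^H$ the restricted function is constantly $\abs{\rC_W(x)}$, while $\pi_{{}^zx}^{Hw}$ is constantly $\abs{\rC_H({}^zx)}$. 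Matching values on each class gives the first equality.

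For the second equality I would pass from double cosets to right cosets. An orbit-counting argument for the $H \times \rC_W(x)$-action $(h,c)\cdot y = hyc^{-1}$ shows that each double coset $Hz\rC_W(x)$ splits into $\abs{\rC_W(x)}/\abs{\rC_W(x)\cap z^{-1}Hz}$ right $H$-cosets, and conjugation by $z$ identifies $\rC_W(x)\cap z^{-1}Hz$ with $z^{-1}\rC_H({}^zx)z$, producing the factor $\abs{\rC_W(x)}/\abs{\rC_H({}^zx)}$. For any $z' = hzc$ with $h\in H$, $c\in\rC_W(x)$, the element ${}^{z'}x$ is $H$-conjugate to ${}^zx$, and since $\pi_y^{Hw}$ depends only on the $H$-conjugacy class of $y$ we have $\pi_{{}^{z'}x}^{Hw} = \pi_{{}^zx}^{Hw}$. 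Summing the $\abs{\rC_W(x)}/\abs{\rC_H({}^zx)}$ identical contributions of the right cosets inside each double coset recovers the coefficients of the middle expression, yielding the second equality. The argument is a routine Mackey-style bookkeeping with no essential obstacle; the only point that calls for a little care is verifying that the membership condition ${}^zx \in Hw$ descends cleanly to double cosets, which was established above using the normalizer hypothesis $w \in \rN_W(H)$.
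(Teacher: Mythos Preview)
Your argument is correct and follows the same route as the paper: the paper dismisses the first equality as ``easy'' and derives the second from the decomposition $Hz\rC_W(x) = \bigsqcup_{c \in \rC_H({}^zx)\backslash \rC_W({}^zx)} Hcz$, which is exactly your right-coset count rephrased. One small notational slip: you write ``$w \in \rN_W(H)$'', but $w \in W\gamma$, not $W$; the subcoset hypothesis gives $w \in \rN_{\mathfrak{G}}(H)$, which is all you need for the $H$-stability of $Hw$.
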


\begin{proof}
The first equality is easy and the second follows because
\begin{equation*}
Hz\rC_W(x) = \bigsqcup_{c \in \rC_H({}^zx)\backslash \rC_W({}^zx)} Hcz.\qedhere
\end{equation*}
\end{proof}

We can also produce class functions in the following way. Consider the subgroup $W\langle \gamma\rangle \leqslant \rN_{\Gamma}(W)$ and let $\rho \in \Irr(W)$ be a $\gamma$-invariant irreducible character. The representation affording $\rho$ can be extended to a representation of $W\langle \gamma\rangle$ containing $\gamma^n$ in its kernel, for some $n > 0$. The trace function $\tilde\rho : W\langle \gamma\rangle \to \CC$ of such a representation is what we call an extension of $\rho$. Note the group $W\langle \gamma\rangle$ may be infinite but, by design, $\tilde\rho$ factors through a finite quotient.

The restriction $\Res_{W\gamma}^{W\langle \gamma\rangle}(\tilde\rho)$ of such an extension, which we usually again denote by $\tilde\rho$, is called an irreducible character of $W\gamma$. The set of irreducible characters is denoted by $\Irr(W\gamma)$. We say $\cB \subseteq \Irr(W\gamma)$ is a basis if it is a basis of $\cf(W\gamma)$. Every basis is orthonormal and is obtained by choosing for each $\gamma$-stable $\rho \in \Irr(W)$ exactly one extension to $W\langle \gamma\rangle$, see \cite[Proposition~11.6.3]{DM}. We need the following analogue of \cite[Corollary~4.11]{TT}

\begin{lem}\label{lem:res-bound}
Assume $Hw \subseteq W\gamma$ is a subcoset and $\rho_i \in \Irr(W)$, with $i \in \{1,2\}$, is $\gamma$-invariant. If $\tilde\rho_i$ is an extension of $\rho_i$ to $W\langle \gamma\rangle$ then
\begin{equation*}
\abs{\langle \Res_{Hw}^{W\gamma}(\tilde\rho_1),\Res_{Hw}^{W\gamma}(\tilde\rho_2)\rangle}
\leqslant
\langle \Res_H^W(\rho_1),\Res_H^{W}(\rho_2)\rangle
\end{equation*}
\end{lem}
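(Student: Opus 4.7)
The plan is to rewrite both inner products using the joint character $\chi := \tilde\rho_1 \cdot \overline{\tilde\rho_2}$, reduce to the irreducible case by non-negativity of multiplicities, and then apply a standard trace-versus-dimension inequality. Since each $\tilde\rho_i$ is the trace of a representation $R_i$ of the finite quotient of $W\langle \gamma\rangle$ through which the extension factors, $\chi$ is the character of $R_1 \otimes R_2^*$, hence a genuine (not virtual) character. Unwinding the definitions of the two inner products gives
$$
\langle \Res_{Hw}^{W\gamma}(\tilde\rho_1),\Res_{Hw}^{W\gamma}(\tilde\rho_2)\rangle = \frac{1}{\abs{H}}\sum_{x \in Hw}\chi(x), \qquad \langle \Res_H^W(\rho_1),\Res_H^W(\rho_2)\rangle = \frac{1}{\abs{H}}\sum_{h \in H}\chi(h),
$$
so it suffices to compare these two averages.

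Decomposing $\chi = \sum_\psi n_\psi \psi$ over irreducible characters $\psi$ of that finite quotient, with each $n_\psi \in \Z_{\geqslant 0}$, the triangle inequality reduces the claim to showing that for every constituent $\psi$,
$$
\biggl| \frac{1}{\abs{H}}\sum_{h \in H}\psi(hw) \biggr| \leqslant \frac{1}{\abs{H}}\sum_{h \in H}\psi(h) = \dim V_\psi^H,
$$
where $V_\psi$ affords $\psi$ and $V_\psi^H$ denotes its $H$-fixed subspace.

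To close out the proof, fix a representation $R$ with character $\psi$ and let $e := \frac{1}{\abs{H}}\sum_{h \in H} R(h)$ be the idempotent projecting onto $V_\psi^H$. A brief computation using $e^2 = e$ and cyclicity of the trace gives
$$
\frac{1}{\abs{H}}\sum_{h \in H}\psi(hw) = \tr(e \cdot R(w)) = \tr\bigl(R(w)|_{V_\psi^H}\bigr),
$$
where the second equality uses that $R(w)$ stabilizes $V_\psi^H$, which follows from $w$ normalizing $H$. Since $R(w)$ has finite order, its eigenvalues on $V_\psi^H$ lie on the unit circle, and the triangle inequality immediately yields $\bigl|\tr(R(w)|_{V_\psi^H})\bigr| \leqslant \dim V_\psi^H$, completing the argument. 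The main bookkeeping subtlety is tracking the $W\langle\gamma\rangle$-versus-finite-quotient distinction, but this is harmless because every extension $\tilde\rho_i$ is by definition realized on a representation with some power of $\gamma$ in its kernel; I do not anticipate any genuine obstacle beyond this.
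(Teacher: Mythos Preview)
Your proof is correct and takes a genuinely different route from the paper's. The paper expands $\Res_{Hw}^{W\gamma}(\tilde\rho_i)$ in an orthonormal basis $\cB \subseteq \Irr(Hw)$, applies Frobenius reciprocity to convert to inductions, and then invokes \cite[Lemma~4.10]{TT}, which bounds $\abs{\langle \Ind_{Hw}^{W\gamma}(\tilde\eta),\tilde\rho_i\rangle}$ by the corresponding group-theoretic multiplicity $\langle \Ind_H^W(\eta),\rho_i\rangle$; summing over $\eta$ recovers the right-hand side. Your argument instead packages both sides as averages of the genuine character $\chi = \tilde\rho_1\overline{\tilde\rho_2}$ over $Hw$ and $H$, decomposes $\chi$ into irreducibles of the finite quotient, and reduces to the elementary trace bound $\abs{\tr(R(w)\vert_{V_\psi^H})} \leqslant \dim V_\psi^H$, using only that $w$ normalizes $H$ (which is built into the definition of subcoset) and that $R(w)$ has finite order. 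Your approach is more self-contained and avoids both the coset-induction formalism and the external citation; the paper's approach has the advantage of reusing machinery already in place for other results in \cite{TT}, and it isolates the key inequality at the level of individual multiplicities rather than the aggregate inner product.
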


\begin{proof}
Expanding out in a basis $\cB \subseteq \Irr(Hw)$ and using the triangle inequality
\begin{align*}
\abs{\langle \Res_{Hw}^{W\gamma}(\tilde\rho_1),\Res_{Hw}^{W\gamma}(\tilde\rho_2)}
&\leqslant
\sum_{\tilde\eta \in \cB} \abs{\langle \tilde\eta, \Res_{Hw}^{W\gamma}(\tilde\rho_1)\rangle\langle \overline{\tilde\eta, \Res_{Hw}^{W\gamma}(\tilde\rho_2)\rangle}}\\
&=
\sum_{\tilde\eta \in \cB} \abs{\langle \Ind_{Hw}^{W\gamma}(\tilde\eta), \tilde\rho_1\rangle}\abs{\langle \Ind_{Hw}^{W\gamma}(\tilde\eta), \tilde\rho_2\rangle}.
\end{align*}
Therefore, using \cite[Lem.~4.10]{TT} we obtain
\begin{align*}
\abs{\langle \Res_{Hw}^{W\gamma}(\tilde\rho_1),\Res_{Hw}^{W\gamma}(\tilde\rho_2)}
&\leqslant
\sum_{\eta \in \Irr(H)} \langle \Ind_H^W(\eta), \rho_1\rangle \langle \Ind_H^W(\eta), \rho_2\rangle\\
&=
\sum_{\eta \in \Irr(H)} \langle \eta, \Res_H^W(\rho_1)\rangle \langle \eta, \Res_H^W(\rho_2)\rangle\\
&=
\langle \Res_H^W(\rho_1),\Res_H^W(\rho_2)\rangle.\qedhere
\end{align*}
\end{proof}

Recall that $\cG = \cG(\overline{\F_q}) = \cG^F$ is the group of $\overline{\F_q}$-points of a connected reductive $\FF_q$-group, with Frobenius $F$. We form the semidirect product $\cG\langle F\rangle$ with the infinite cyclic group generated by $F$, defined so that $FgF^{-1} = F(g)$ for all $g \in \cG$. If $\cH n \subseteq \cG F$ is a subcoset then the centralizer $\rC_{\cH}(n) \leqslant \rC_{\cG}(n)$ is a finite group. Moreover, if $\cH \leqslant \cG$ is closed and connected then, by the Lang--Steinberg Theorem, $\cH$ acts transitively by conjugation on $\cH n$. If $\cH$ is a Levi subgroup of $\cG$, resp., maximal torus of $\cG$, then we call $\cH n$ a \emph{Levi subcoset}, resp., a \emph{toral subcoset}.

We define
\begin{equation*}
\scC(\cG F) := \{(g,n) \in \cG \times \cG F \mid gn = ng\}
\end{equation*}
to be the set of commuting pairs. The group $\cG$ acts by simultaneous conjugation on $\scC(\cG F)$. We write $[g,n]$ for the orbit of $(g,n) \in \scC(\cG F)$ and $\scC(\cG F)/\cG$ for the set of orbits.

\begin{lem}\label{lem:GF-conj-classes}
The map $[g] \mapsto [g,F]$ is a well-defined bijection $\Cl(\rC_{\cG}(F)) \to \scC(\cG F)/\cG$.
\end{lem}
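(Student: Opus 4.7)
The plan is to verify in turn that the map is well-defined, surjective, and injective. The only nontrivial input is the Lang--Steinberg theorem, which enters in the surjectivity step; the rest is bookkeeping with the relation $FgF^{-1}=F(g)$ inside $\cG\langle F\rangle$.

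For well-definedness, I would observe that any $g\in \rC_{\cG}(F)=\cG^F$ satisfies $FgF^{-1}=F(g)=g$, so $(g,F)\in\scC(\cG F)$. Any $h\in\cG^F$ also satisfies $hFh^{-1}=F$ (because $F(h)=h$), and therefore simultaneous conjugation by $h$ carries $(g,F)$ to $(hgh^{-1},F)$; hence the orbit $[g,F]$ depends only on the $\cG^F$-conjugacy class of $g$.

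For surjectivity, given $(g,n)\in\scC(\cG F)$, write $n=xF$ with $x\in\cG$. Since $\cG$ is connected, the Lang--Steinberg theorem provides some $h\in\cG$ with $h^{-1}F(h)=x$, equivalently $hnh^{-1}=F$. Conjugating the pair by $h$, the first coordinate $hgh^{-1}$ now commutes with $F$, so $hgh^{-1}\in\cG^F$ and $[g,n]=[hgh^{-1},F]$ lies in the image.

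For injectivity, suppose $g_1,g_2\in\cG^F$ with $[g_1,F]=[g_2,F]$, so some $h\in\cG$ simultaneously satisfies $hg_1h^{-1}=g_2$ and $hFh^{-1}=F$; the latter is equivalent to $F(h)=h$, which places $h\in\cG^F$ and forces $[g_1]=[g_2]$ in $\Cl(\cG^F)$. No step presents a genuine obstacle, and the only structural hypothesis used beyond the definitions is the connectedness of $\cG$ in the Lang--Steinberg step.
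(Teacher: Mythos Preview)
Your proof is correct and follows the same approach as the paper: the paper's proof simply asserts that injectivity is clear and handles surjectivity via Lang--Steinberg (writing $n=F^h$ for some $h\in\cG$), while you have spelled out well-definedness and injectivity explicitly, but the content is identical.
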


\begin{proof}
Clearly this is injective. If $(g,n) \in \scC(\cG F)$ then by the Lang--Steinberg Theorem $n = F^h$ for some $h \in \cG$ so $[g,n] = [{}^hg,F]$.
\end{proof}

Let $\cf(\scC(\cG F))$ be the set of $\cG$-invariant functions $f : \scC(\cG F) \to \mathbb{C}$. Via Lemma~\ref{lem:GF-conj-classes} we can identify $\cf(\scC(\cG F))$ with the space $\cf(\rC_{\cG}(F))$ of $\mathbb{C}$-valued class functions on the finite group $\rC_{\cG}(F)$. We define $\Irr(\scC(\cG F))$ to be those functions corresponding to $\Irr(\rC_{\cG}(F))$. The advantage of working with $\scC(\cG F)$ is that we can work with the different (inner) forms $\rC_{\cG}(gF)$ of $\cG$ simultaneously.

If $\cL w \subseteq \cG F$ is a Levi subcoset then we can define Deligne--Lusztig induction and restriction maps
\begin{equation*}
R_{\cL w}^{\cG F} : \cf(\scC(\cL w)) \to \cf(\scC(\cG F))
\qquad\text{and}\qquad
\SR_{\cL w}^{\cG F} : \cf(\scC(\cG F)) \to \cf(\scC(\cL w)).
\end{equation*}
For our purposes this can be done as follows. We start first with the case of a coset $\cL F$ where $\cL \leqslant \cG$ is an $F$-stable Levi subgroup. Making the identifications $\cf(\rC_{\cG}(F)) \to \cf(\scC(\cG F))$ and $\cf(\rC_{\cL}(F)) \to \cf(\scC(\cL F))$ we define 
$$R_{\cL F}^{\cG F} := R_{\cL}^{\cG},~~\SR_{\cL F}^{\cG F} := \SR_{\cL}^{\cG}.$$

Now consider a general Levi subcoset $\cL w \subseteq \cG F$. We pick an element $g \in \cG$ such that $F^g \in \cL w$, so that ${}^g(\cL w) = \cL_1F$ where $\cL_1 := {}^g\cL$ is an $F$-stable Levi subgroup of $\cG$. If $\iota_g : \cG\langle F\rangle \to \cG\langle F\rangle$ is the inner automorphism defined by $\iota_g(x) = {}^gx$ then $\iota_g(\cL w) = \cL_1F$ and we define
\begin{equation*}
R_{\cL w}^{\cG F} := R_{\cL_1F}^{\cG F} \circ (\iota_g^{-1})^*
\qquad\text{and}\qquad
\SR_{\cL w}^{\cG F} := (\iota_g)^* \circ  \SR_{\cL_1F}^{\cG F},
\end{equation*}
where $(\iota_g^{-1})^*$ is the map $f\mapsto f\circ\iota_g^{-1}$ and likewise for $(\iota_g)^*$.

We note that the maps $R_{\cL w}^{\cG F}$ and $\SR_{\cL w}^{\cG F}$ are defined only up to composition with $(\iota_n)^*$ for some $n \in \rN_{\cG}(\cL w) = \rN_{\rC_{\cG}(w)}(\cL)\cL$. We need the following interpretation of the Mackey formula.

\begin{lem}\label{lem:DL-at-ss}
If $\cL w \subseteq \cG F$ is a Levi subcoset and $\cT x \subseteq \cG F$ is a toral subcoset then
\begin{equation*}
\SR_{\cL w}^{\cG F}\circ R_{\cT x}^{\cG F} = \sum_{\substack{z \in \cL \backslash \cG\\ {}^z(\cT x) \subseteq \cL w}} R_{{}^z(\cT x)}^{\cL w} \circ (\iota_z^{-1})^*
\end{equation*}
\end{lem}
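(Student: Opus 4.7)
The plan is to reduce to the classical Mackey formula for Deligne--Lusztig induction from a torus---which is unconditional; see, e.g., \cite[Chap.~11]{DM}---and then rewrite the resulting sum, indexed by rational double cosets, in the algebraic form claimed in the lemma via the Lang--Steinberg theorem.

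First, I would unpack the definitions of $R_{\cL w}^{\cG F}$ and $\SR_{\cL w}^{\cG F}$ given immediately before the lemma. Pick $g \in \cG$ with $F^g \in \cL w$ and $h \in \cG$ with $F^h \in \cT x$, and set $\cL_1 := {}^g\cL$ and $\cT_2 := {}^h\cT$; both are $F$-stable. By the stated definitions,
\begin{equation*}
\SR_{\cL w}^{\cG F} \circ R_{\cT x}^{\cG F} = (\iota_g)^* \circ \SR_{\cL_1}^{\cG} \circ R_{\cT_2}^{\cG} \circ (\iota_h^{-1})^*.
\end{equation*}
The classical Mackey formula then yields
\begin{equation*}
\SR_{\cL_1}^{\cG} \circ R_{\cT_2}^{\cG} = \sum_{y \in \cL_1^F \backslash S_0^F / \cT_2^F} R_{{}^y\cT_2}^{\cL_1} \circ (\iota_{y^{-1}})^*,
\end{equation*}
where $S_0 := \{y \in \cG : {}^y\cT_2 \leqslant \cL_1\}$; only such $y$ contribute, since otherwise $\cL_1 \cap {}^y\cT_2$ contains no maximal torus of $\cG$.

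For each representative $y$, set $z := g^{-1} y h$. A direct computation verifies: (i) ${}^z\cT = g^{-1}({}^y\cT_2)g \leqslant \cL$ and $zxz^{-1} \in \cL w$, so ${}^z(\cT x) \subseteq \cL w$; (ii) left multiplication of $y$ by $\cL_1^F$ corresponds to left multiplication of $z$ by $\cL$, while right multiplication of $y$ by $\cT_2^F$ sends $z$ to an element of the same $\cL$-coset (since ${}^z\cT \leqslant \cL$); and (iii) Lang--Steinberg applied to $\cL$ guarantees that every $\cL$-coset in the index set $\{z \in \cG : {}^z(\cT x) \subseteq \cL w\}$ admits a representative of the form $g^{-1}yh$ with $y \in \cG^F$. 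These three points together give the required bijection of index sets. Finally, using the definition of $R_{{}^z(\cT x)}^{\cL w}$ applied with the conjugating element $g$---which sends the toral subcoset ${}^z(\cT x)$ to the $F$-stable form ${}^y\cT_2 \cdot F \subseteq \cL_1 F$---one identifies each summand $(\iota_g)^* \circ R_{{}^y\cT_2}^{\cL_1} \circ (\iota_{y^{-1}})^* \circ (\iota_h^{-1})^*$ with $R_{{}^z(\cT x)}^{\cL w} \circ (\iota_{z^{-1}})^*$, completing the proof.

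The main obstacle will be the careful bookkeeping in this final identification: the Deligne--Lusztig maps on a non-$F$-stable subcoset are only defined up to pullback by an inner automorphism coming from a normalizer (as noted just above the lemma), so I must verify that the pullbacks $(\iota_g)^*$, $(\iota_h^{-1})^*$, $(\iota_{y^{-1}})^*$, and $(\iota_{z^{-1}})^*$ combine consistently under the reindexing $z = g^{-1}yh$. Lang--Steinberg is the key tool that guarantees the required representatives exist, and once the composition of inner pullbacks is matched against the definition of $R_{{}^z(\cT x)}^{\cL w}$, the formula of the lemma drops out.
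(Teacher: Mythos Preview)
Your proposal is correct and follows essentially the same route as the paper's proof: conjugate both subcosets to $F$-stable form, apply the classical Mackey formula for Deligne--Lusztig induction from a torus, and then reindex via Lang--Steinberg to obtain the algebraic-coset formulation. The paper organizes the reindexing slightly differently, introducing the map $\psi(v)=h_w v h_x^{-1}$ and a twisted Frobenius $F'(v)=wvx^{-1}$ satisfying $F\psi=\psi F'$, which packages your substitution $z=g^{-1}yh$ and your points (i)--(iii) into a single bijection $(\cL\backslash\cG)^{F'}\to L_1\backslash G$; and for the final identification of summands it appeals to the argument of \cite[Proposition~11.3.10]{DM} rather than spelling out the inner-automorphism bookkeeping---but this is a matter of presentation, not method.
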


\begin{proof}
Fix elements $h_w,h_x \in \cG$ such that $w = F^{h_w}$ and $x = F^{h_x}$ and let $\cL_1 := {}^{h_w}\cL$ and $\cT_1 := {}^{h_x}\cT$ be corresponding $F$-stable subgroups of $\cG$. According to the Mackey formula, see \cite[Thm~9.2.6]{DM}, we have
\begin{equation*}
\SR_{\cL_1}^{\cG}\circ R_{\cT_1}^{\cG}
=
\sum_{\substack{u \in L_1\backslash G/T_1\\ {}^u\cT_1 \leqslant \cL_1}} T_{{}^u\cT_1}^{\cL_1} \circ (\iota_u^{-1})^*
=
\sum_{\substack{u \in L_1\backslash G\\ {}^u\cT_1 \leqslant \cL_1}} R_{{}^u\cT_1}^{\cL_1} \circ (\iota_u^{-1})^*,
\end{equation*}
where $L_1 = \rC_{\cL_1}(F)$, $G = \rC_{\cG}(F)$, and $T_1 = \rC_{\cT_1}(F)$. The second equality follows because if ${}^u\cT_1 \leqslant \cL_1$ then ${}^uT_1 \leqslant L_1$ so
\begin{equation*}
L_1uT_1 = L_1({}^uT_1)u = L_1u.
\end{equation*}

Consider the isomorphism of varieties $\psi : \cG \to \cG$ given by $\psi(v) = h_wvh_x^{-1}$. If $F' : \cG \to \cG$ is the morphism defined by $F'(v) = wvx^{-1}$ then we have $F\psi = \psi F'$. From this it follows that we have bijections
\begin{equation*}
(\cL\backslash \cG)^{F'}
\overset{\psi}{\longrightarrow}
\rC_{\cL_1\backslash \cG}(F)
\longrightarrow
L_1 \backslash G,
\end{equation*}
where $\rC_{\cL_1\backslash \cG}(F) = \{\cL_1 u \in \cL_1\backslash \cG \mid \cL_1 uF = \cL_1 Fu\}$, and $(\cL\backslash \cG)^{F'}$ denotes the cosets fixed by $F'$. The second bijection is a simple consequence of the Lang--Steinberg Theorem. If $z \in \cG$ then ${}^{\psi(z)}\cT_1 \leqslant \cL_1$ if and only if ${}^z\cT \leqslant \cL$ and $F'(\cL z) = \cL z$ if and only if ${}^zx \in \cL w$. It is clear that the combination of these two conditions is equivalent to the condition: ${}^z(\cT x) \subseteq \cL w$.

Finally, conjugating the expression above we get
\begin{equation*}
\SR_{\cL w}^{\cG F}\circ R_{\cT x}^{\cG}
=
\sum_{\substack{z \in \cL\backslash \cG\\ {}^z(\cT x) \leqslant \cL w}} (\iota_{h_w})^* \circ R_{{}^{\psi(z)}\cT_1}^{\cL_1} \circ (\iota_{\psi(z)}^{-1})^* \circ (\iota_{h_x}^{-1})^*.
\end{equation*}
It suffices to show that $(\iota_{h_w})^* \circ R_{{}^{\psi(z)}\cT_1}^{\cL_1} = R_{{}^z\cT}^{\cL} \circ (\iota_{h_w})^*$ when $F'(z) = z$, where $R_{{}^z\cT}^{\cL} = R_{({}^z\cT)w}^{\cL w}$ is defined with respect to the Frobenius $w$ on $\cL$. However, the arguments to prove this are identical to those used to prove \cite[Proposition~11.3.10]{DM}, see also the arguments by Bonnaf\'e in \cite{NTT}. We omit the details.
\end{proof}

From now on we assume the underlying group scheme is $\GL_n^{\epsilon}$ so that
$$\cG = \cG_n = \GL_n(\overline{\F_q}).$$
We will assume $\cT \leqslant \cG$ is the diagonal maximal torus and we denote by $W := \rN_{\cG}(\cT)/\cT \cong \SSS_n$ the corresponding Weyl group. The quotient $\rN_{\cG \langle F\rangle}(\cT)/\cT = W\langle F\rangle$ is isomorphic to the semidirect product $W\rtimes \langle F\rangle$, where we identify $F$ with its natural image $\cT F$. The coset $WF \subseteq W\langle F\rangle$ is, by definition, the set of toral subcosets $\cT n$ where $n \in \rN_{\cG F}(\cT)$.

We wish to reinterpret Lemma~\ref{lem:DL-at-ss} in the language of Lusztig's almost characters. Recall that $\GL_n^{\epsilon}$ is self-dual. If $(w,s) \in WF \times \cT$ is a pair such that $ws = sw$ then we set 
$$R_w^{\cG F}(s) := R_x^{\cG F}(\theta),$$ 
where $x \in WF$ and $\theta \in \Irr(\scC(w))$ correspond to $(w,s)$ under a bijection obtained as in \cite[Proposition~11.1.16]{DM} from duality.

If $s \in \cT$ and $\rC_{WF}(s) \neq \emptyset$ then, following Lusztig \cite[\S8.4]{Lus}, we define 
$$\cR_s^{\cG F} : \cf(\rC_{WF}(s)) \to \cf(\scC(\cG F)),~~
f \mapsto \frac{1}{\abs{\rC_W(s)}}\sum_{w \in \rC_{WF}(s)} f(w)R_w^{\cG F}(s) \in \cf(\scC(\cG F)).$$
where, as defined above, $\rC_X(s) = \{w \in X \mid ws=sw\}$ for any subset $X \subseteq W\langle F\rangle$.

\begin{cor}\label{cor:almost-at-ss}
Assume $\cT \leqslant \cL \leqslant \cG$ is a Levi subgroup and $w \in \rN_{WF}(\cL)$. Then for any $s \in \cT$ with $\rC_{WF}(s) \neq \emptyset$ we have
\begin{equation*}
\SR_{\cL w}^{\cG F} \circ \cR_s^{\cG F}
=
\sum_{
	\substack{
		z \in H\backslash W/\rC_W(s)\\
		\rC_{Hw}({}^zs) \neq \emptyset}
	}
\cR_{{}^zs}^{\cL w}
\circ
\Res_{\rC_{Hw}({}^zs)}^{\rC_{WF}({}^zs)}
\circ
(\iota_z^{-1})^*
\end{equation*}
where $H = \rN_{\cL}(\cT)/\cT$ is the Weyl group of $\cL$.
\end{cor}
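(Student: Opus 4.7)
The plan is to unwind the definition of $\cR_s^{\cG F}$, apply Lemma~\ref{lem:DL-at-ss} to each summand, and then regroup the resulting double sum by $(H,\rC_W(s))$-double cosets in $W$. Expanding,
\begin{equation*}
\SR_{\cL w}^{\cG F}\bigl(\cR_s^{\cG F}(f)\bigr)
=
\frac{1}{|\rC_W(s)|}\sum_{x \in \rC_{WF}(s)} f(x)\, \SR_{\cL w}^{\cG F}\bigl(R_x^{\cG F}(s)\bigr).
\end{equation*}
Writing $R_x^{\cG F}(s)=R_{\cT x}^{\cG F}(\theta_s)$ via the duality identification of \cite[Proposition~11.1.16]{DM}, we apply Lemma~\ref{lem:DL-at-ss}. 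Any $\cL$-coset $z\in\cL\backslash\cG$ with ${}^z(\cT x)\subseteq\cL w$ admits a representative in $N_\cG(\cT)$: $\cL$-conjugacy of maximal tori in $\cL$ lets one left-multiply $z$ by an element of $\cL$ to arrange ${}^z\cT=\cT$, so the relevant sum is indexed bijectively by $z\in H\backslash W$ satisfying ${}^zx\in Hw$. Under the duality pairing, $(\iota_z^{-1})^*(\theta_s)=\theta_{{}^zs}$, hence
\begin{equation*}
\SR_{\cL w}^{\cG F}\bigl(R_x^{\cG F}(s)\bigr)
=
\sum_{\substack{z\in H\backslash W\\ {}^zx\in Hw}} R_{{}^zx}^{\cL w}({}^zs).
\end{equation*}

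Substituting back and switching the order of summation, the change of variable $y={}^zx$ (so $x={}^{z^{-1}}y$ and the condition becomes $y\in \rC_{Hw}({}^zs)$), combined with $f({}^{z^{-1}}y)=(\iota_z^{-1})^*(f)(y)$, allows us to recognize the inner sum as $|\rC_H({}^zs)|\cdot\cR_{{}^zs}^{\cL w}\bigl(\Res_{\rC_{Hw}({}^zs)}^{\rC_{WF}({}^zs)}(\iota_z^{-1})^*(f)\bigr)$, by direct comparison with the definition of $\cR_{{}^zs}^{\cL w}$. This yields
\begin{equation*}
\SR_{\cL w}^{\cG F}\bigl(\cR_s^{\cG F}(f)\bigr)
=
\sum_{\substack{z\in H\backslash W\\ \rC_{Hw}({}^zs)\neq\emptyset}}\frac{|\rC_H({}^zs)|}{|\rC_W(s)|}\,\cR_{{}^zs}^{\cL w}\bigl(\Res_{\rC_{Hw}({}^zs)}^{\rC_{WF}({}^zs)}(\iota_z^{-1})^*(f)\bigr).
\end{equation*}

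To finish, I group the outer sum by $(H,\rC_W(s))$-double cosets. For $z'=hzc$ with $h\in H$ and $c\in\rC_W(s)$, the $\rC_W(s)$-class-invariance of $f$ gives $(\iota_{z'}^{-1})^*(f)=(\iota_h^{-1})^*(\iota_z^{-1})^*(f)$ on $\rC_{WF}({}^{z'}s)=\rC_{WF}({}^{hz}s)$, and combined with the $H$-equivariance $R_{{}^hy}^{\cL w}({}^hs)=R_y^{\cL w}(s)$ (reflecting that $h\in H$ lifts to $N_\cL(\cT)$ and acts by $\cL$-conjugation), the summands for $z$ and $z'$ coincide. The identity $z\rC_W(s)z^{-1}=\rC_W({}^zs)$ shows each double coset contains exactly $|\rC_W(s)|/|\rC_H({}^zs)|$ left $H$-cosets, which cancels the prefactor $|\rC_H({}^zs)|/|\rC_W(s)|$, producing the stated formula. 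The main obstacle is the careful bookkeeping of the duality conventions, in particular verifying $(\iota_z^{-1})^*(\theta_s)=\theta_{{}^zs}$ and the $H$-equivariance of the Deligne--Lusztig characters on toral subcosets at each stage.
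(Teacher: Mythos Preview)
Your proposal is correct and follows essentially the same approach as the paper: both expand via the Mackey formula (Lemma~\ref{lem:DL-at-ss}), reduce the index set from $\cL\backslash\cG$ to $H\backslash W$ by choosing representatives normalizing $\cT$, and then regroup by $(H,\rC_W(s))$-double cosets. The only cosmetic difference is that the paper checks the identity on the basis functions $\pi_x^{\rC_{WF}(s)}$ and packages the double-coset bookkeeping into Lemma~\ref{lem:res-coset}, whereas you work with a general $f$ and carry out the coset count by hand; the underlying computation is identical.
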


\begin{proof}
By linearity it is enough to check both sides agree when evaluated at $\pi_x^{\rC_{WF}(s)}$ for some $x \in \rC_{WF}(s)$. But in that case $\cR_s^{\cG F}(\pi_x^{\rC_{WF}(s)}) = R_x^{\cG F}(s)$. Assume $(x,s)$ corresponds to $(y,\theta)$. Evaluating at $\theta$ we have by Lemma~\ref{lem:res-coset} that
\begin{equation*}
\SR_{\cL w}^{\cG F}(R_y^{\cG F}(\theta))
=
\sum_{
	\substack{
		z \in \cL\backslash\cG\\
		{}^zy \subseteq \cL w}
	}
R_{{}^zy}^{\cL w}({}^z\theta).
\end{equation*}

If ${}^zy \subseteq \cL w$ then ${}^z\cT \leqslant \cL$ so ${}^{lz}\cT = \cT$ for some $l \in \cL$. Therefore, we can take the sum over cosets $\rN_{\cL}(\cT)\backslash \rN_{\cG}(\cT)$ or similarly $H\backslash W$. As ${}^zy \subseteq \rN_{\cG F}(\cT)$ the condition ${}^zy \subseteq \cL w$ is equivalent to
\begin{equation*}
{}^zy \subseteq \rN_{\cG F}(\cT) \cap \cL w = \rN_{\cL}(\cT)w
\end{equation*}
which in turn is equivalent to ${}^zy \in Hw$. Breaking the sum in Lemma~\ref{lem:DL-at-ss} along double cosets, as in the proof of Lemma~\ref{lem:res-coset}, gives
\begin{equation*}
\SR_{\cL w}^{\cG F}(R_y^{\cG F}(\theta))
=
\sum_{z \in H\backslash W/\rC_W(s)}
\sum_{
	\substack{
		c \in \rC_H({}^zs)/\rC_W({}^zs)\\
		{}^{cz}y \in Hw}
	}
R_{{}^{cz}y}^{\cL w}({}^{cz}\theta)
\end{equation*}
Picking a different double coset representative we can assume that ${}^zy \in Hw$. We claim that $\rC_{Hw}({}^zs) = \rC_H({}^zs){}^zy$. Certainly ${}^zx \in \rC_{Hw}({}^zs)$ by assumption. Now if $d \in \rC_W({}^zs)$ then $d({}^zx) \in Hw$ if and only if $d \in \rC_H({}^zs) = H \cap \rC_W({}^zs)$. The statement now follows from Lemma~\ref{lem:res-coset}.
\end{proof}

We are now ready to prove Theorem~\ref{l-bound}.

\begin{proof}[Proof of Theorem~\ref{l-bound}]
By assumption, $\chi$ has true level $j$. Embed the maximal diagonal torus $\cT$ in the natural $F$-stable Levi subgroup
$\cM = \cM_1 \times \cM_2$, where $\cM_1\cong \cG_j$ and $\cM_2 \cong \cG_{n-j}$. Note that $F$ stabilises $\cM_1$ and $\cM_2$. We will identify $\cM\langle F\rangle$ with a subgroup of $\cM_1\langle F\rangle \times \cM_2\langle F\rangle$, where we again denote by $F$ its restriction to $\cM_i$.

By \cite[Theorem~3.9]{GLT} our assumption on the level of $\chi$ implies that $\chi$ is a constituent of
\begin{equation*}
R_{\cM F}^{\cG F}(\alpha \boxtimes 1)
\end{equation*}
for some $\alpha \in \Irr(\scC(\cM_1F))$. Note that
\begin{equation*}
\rC_{\cM}(F) = \rC_{\cM_1}(F)\rC_{\cM_2}(F) \cong \GL_j^{\epsilon}(q) \times \GL_{n-j}^{\epsilon}(q)
\end{equation*}
If $W_i \leqslant W$ is the subgroup $\rN_{\cM_i}(\cT)/\cT$ then the subgroup $W_1W_2 \leqslant W$ is a direct product with $W_1\cong \SSS_j$ and $W_2 \cong \SSS_{n-j}$.

By \cite[Theorem~11.7.3]{DM} we have $\alpha = \pm\cR_s^{\cM_1F}(\tilde\phi)$ for some $s \in \cM_1 \cap \cT$, with $\rC_{W_1F}(s) \neq \emptyset$, and some irreducible character $\tilde\phi \in \Irr(\rC_{W_1F}(s))$ afforded by a representation over $\QQ$, see \cite[Proposition~3.2]{Lus}. Note that $\rC_{W_1W_2}(s) = \rC_{W_1}(s)W_2$ is a reflection group and we have
\begin{equation*}
\rC_{W_1W_2F}(s) = \rC_{W_1F}(s)W_2.
\end{equation*}
It is known, see \cite[Proposition~11.6.6]{DM}, that
\begin{equation*}
R_{\cM F}^{\cG F}(\alpha \boxtimes 1_{\cM_2F})
=
\pm\cR_s^{\cG F}(\Ind_{\rC_{W_1W_2F}(s)}^{\rC_{WF}(s)}(\tilde\phi\boxtimes 1))
\end{equation*}
from which it follows that $\chi = \pm\cR_s^{\cG F}(\tilde\psi)$ for some irreducible constituent $\tilde\psi \in \Irr(\rC_{WF}(s))$ of the induced function $\Ind_{\rC_{W_1W_2F}(s)}^{\rC_{WF}(s)}(\tilde\phi\boxtimes 1)$.

We denote again by $\tilde\phi$ and $\tilde\psi$ irreducible characters of $\rC_{W_1}(s)\langle F\rangle$ and $\rC_W(s)\langle F\rangle$ respectively, yielding $\tilde\phi$ and $\tilde\psi$ upon restriction to the respective cosets. By \cite[Lemma~4.10]{TT}
\begin{equation*}
\abs{\langle \Ind_{\rC_{W_1W_2F}(s)}^{\rC_{WF}(s)}(\tilde\phi\boxtimes1),\tilde\psi\rangle}
\leqslant
\langle \Ind_{\rC_{W_1W_2}(s)}^{\rC_W(s)}(\phi\boxtimes1), \psi\rangle.
\end{equation*}
In particular, $\psi$ is a constituent of $\Ind_{\rC_{W_1}(s)W_2}^{\rC_W(s)}(\phi\boxtimes1)$.

Following the proof of Corollary~\ref{hc-bound} it suffices to show that
\begin{equation}\label{eq-d222}
\sum_{\eta \in \Irr(L_1)} \abs{\langle \eta,\SR_{\cL_1 F}^{\cG F}(\chi)\rangle} \leqslant \langle \SR_{\cL_1 F}^{\cG F}(\chi),\SR_{\cL_1 F}^{\cG F}(\chi)\rangle \leqslant n^{3j}.
\end{equation}
A straightforward argument shows that we may find a Levi subgroup $\cT \leqslant \cL \leqslant \cG$, an element $w \in \rN_{\cG F}(\cT)$, and an element $h \in \cG$ such that ${}^h(\cL w) = \cL_1F$, see \cite[Proposition~11.4.1]{DM}. With this we need only bound
\begin{equation*}
\langle \SR_{\cL w}^{\cG F}(\chi),\SR_{\cL w}^{\cG F}(\chi) \rangle
=
\langle \SR_{\cL w}^{\cG F}(\cR_s^{\cG}(\tilde\psi)), \SR_{\cL w}^{\cG F}(\cR_s^{\cG}(\tilde\psi))\rangle.
\end{equation*}

By Corollary~\ref{cor:almost-at-ss}
\begin{equation*}
\SR_{\cL w}^{\cG F}(\cR_s^{\cG}(\tilde\psi))
=
\sum_{
	\substack{
		z \in H\backslash W/\rC_W(s)\\
		\rC_{Hw}({}^zs) \neq \emptyset}
	}
	\cR_{{}^zs}^{\cL w}(
		\Res_{\rC_{Hw}({}^zs)}^{\rC_{WF}({}^zs)}({}^z\tilde\psi)
	),
\end{equation*}
where $H = \rN_{\cL}(\cT)/\cT$ is the Weyl group of $\cL$. The number of double cosets appearing in this sum is bounded above by
\begin{align*}
\abs{H\backslash W/\rC_W(s)} \leqslant \abs{W/\rC_W(s)} \leqslant \abs{W/W_2} = \abs{\SSS_n/\SSS_{n-j}} \leqslant n^j
\end{align*}
because $W_2 \leqslant \rC_W(s)$.

Now, by the disjointness of Deligne--Lusztig characters, see \cite[Proposition~11.3.2]{DM}, the summands are pairwise orthogonal because each ${}^zs$ lies in a distinct $\cL$-conjugacy class. So, using Lemma~\ref{lem:res-bound} it suffices to bound
\begin{equation*}
\abs{\langle \Res_{\rC_{Hw}({}^zs)}^{\rC_{WF}({}^zs)}(\tilde\psi),\Res_{\rC_{Hw}({}^zs)}^{\rC_{WF}({}^zs)}(\tilde\psi)\rangle}
\leqslant
\langle \Res_{\rC_H({}^zs)}^{\rC_W({}^zs)}(\psi),\Res_{\rC_H({}^zs)}^{\rC_W({}^zs)}(\psi)\rangle
\leqslant
\psi(1)^2
\end{equation*}
But we know $\psi$ is a constituent of $\Ind_{\rC_{W_1}(s)W_2}^{\rC_W(s)}(\phi\boxtimes1)$ so
\begin{align*}
\psi(1)^2 &\leqslant \Ind_{\rC_{W_1}(s)W_2}^{\rC_W(s)}(\phi\boxtimes1)(1)^2\\
&\leqslant \Ind_{\rC_{W_1}(s)W_2}^{\rC_W(s)}(\phi\boxtimes1)(1)^2\\
&\leqslant \abs{W/W_2}^2\\
& \leqslant n^{2j}.\qedhere
\end{align*}
\end{proof}

\subsection{An asymptotic version of \cite[Theorem 1.1]{BLST} and \cite[Theorem 1.9]{TT}} 
Now we can prove the main result of the section:

\begin{thm}\label{appr-bound}
For any $\epsilon > 0$, there are some explicit positive constants $N_0=N_0(\epsilon)$ and $q_0=q_0(\epsilon)$, such that the following 
statements holds for all integers $n \geq N_0$ and all prime powers $q \geq q_0$. 
Let $\cG = \GL_n(\overline{\F_q})$ and let $F:\cG\to\cG$ be a Frobenius 
endomorphism so that $\cG^F  \in \{\GL_n(q),\GU_n(q)\}$. Suppose we are in one of the following two cases.
\begin{enumerate}[\rm(i)] 
\item $G := \cG^F$ and $g \in G$ is any element such that $\CB_\cG(g)$ is contained in a proper $F$-stable Levi subgroup $\cL$ of $\cG$ and $L:= \cL^F$.
\item  $G:=[\cG,\cG]^F \in \{\SL_n(q),\SU_n(q)\}$, $g \in G$, and either 
\begin{enumerate}[\rm(a)]
\item $\CB_{[\cG,\cG]}(g)$ is contained in a proper split Levi subgroup $\cL$ of $[\cG,\cG]$ and $L:=\cL^F$, or 
\item $g$ is non-central semisimple with $L:=\CB_G(g)$. 
\end{enumerate}
\end{enumerate}
Then  
\begin{equation}\label{eq-bound1}
  |\chi(g)| \leq \chi(1)^{\alpha(L)+\epsilon}
\end{equation}  
for all $\chi \in \Irr(G)$.
\end{thm}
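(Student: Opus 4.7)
My plan is to derive Theorem~\ref{appr-bound} from Theorem~\ref{l-bound} by absorbing the extra factors into the $\chi(1)^\epsilon$ slack. Setting $j = \tcl(\chi)$ and $D = \dim \cO^*_\chi$, Theorem~\ref{l-bound} gives
\[
|\chi(g)| \leq n^{3j}\left(\frac{q+1}{q-1}\right)^{D/2}\chi(1)^{\alpha(L)},
\]
so it suffices to prove $n^{3j}\left(\frac{q+1}{q-1}\right)^{D/2} \leq \chi(1)^\epsilon$ for $n \geq N_0(\epsilon)$ and $q \geq q_0(\epsilon)$. Linear characters (necessarily $j = 0$) satisfy $|\chi(g)| = 1 = \chi(1)^{\alpha(L)+\epsilon}$, so I assume $j \geq 1$.

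I would split the budget $\chi(1)^\epsilon = \chi(1)^{\epsilon/2}\cdot\chi(1)^{\epsilon/2}$ and bound each factor separately, using two different lower bounds on $\chi(1)$. For the factor $\left(\frac{q+1}{q-1}\right)^{D/2}$, I would invoke the Kawanaka--Lusztig--Taylor wave-front theory: via the Jordan decomposition $\chi = \chi_{s,\psi}$, the identity $\chi(1) = [G^*:\rC_{G^*}(s)]_{p'}\,\psi(1)$, the $p$-part formula $\psi(1)_p = q^{\dim \cO^*_\psi/2}$, the induction identity $\dim \cO^*_\chi = \dim \cO^*_\psi + \dim \cG - \dim \rC_{\cG}(s)$, and the routine estimate $[G^*:\rC_{G^*}(s)]_{p'} \geq c\,q^{(\dim \cG - \dim \rC_{\cG}(s))/2}$, one obtains $\chi(1) \geq c_n\, q^{D/2}$ for some $c_n$ of polynomial size in $n$. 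Hence $\chi(1)^{\epsilon/2} \geq q^{\epsilon D/4}$ for $n \geq N_0(\epsilon)$, and the required bound reduces to the elementary $\frac{q+1}{q-1} \leq q^{\epsilon/2}$, valid for all $q \geq q_0(\epsilon)$ independently of $n$ and $D$. For $n^{3j}$, I would use the GLT lower bound on the degrees of characters of prescribed true level: for $1 \leq j \leq n/2$ one has $\chi(1) \geq c'\, q^{j(n-j)}$, with the symmetric bound for $j > n/2$. Then $\chi(1)^{\epsilon/2} \geq q^{\epsilon j(n-j)/2}$, and the required $n^{3j} \leq q^{\epsilon j(n-j)/2}$ reduces to $6\log n \leq \epsilon (n-j)\log q$ (or the symmetric version), which holds for $n \geq N_0(\epsilon)$.

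The $\SL_n^\epsilon(q)$ cases (ii)(a) and (ii)(b) reduce to (i) via Clifford theory: any $\chi \in \Irr(\SL_n^\epsilon(q))$ is a constituent of $\Res^{\tilde G}_G(\tilde\chi)$ for some $\tilde\chi \in \Irr(\tilde G = \GL_n^\epsilon(q))$ of the same true level, with $\tilde\chi(1)/\chi(1)$ dividing $|\tilde G : G\,Z(\tilde G)| \leq n$. Under the centralizer hypotheses in (ii), $\rC_{\tilde\cG}(g) = Z(\tilde\cG)\cdot\rC_{\cG}(g)$ lies in a proper $F$-stable Levi $\tilde\cL$ of $\tilde\cG$ with $\alpha(\tilde L) = \alpha(L)$, so case (i) applied to $\tilde\chi$ gives the required bound for $\chi$ (the extra factor $n$ being absorbed into $\chi(1)^\epsilon$ for $n$ large, using that non-linear characters of $\SL_n^\epsilon(q)$ have degree at least a positive power of $q$). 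The main obstacle I foresee is the careful bookkeeping of the polynomial-in-$n$ corrections in the two lower bounds on $\chi(1)$ and verifying that $N_0(\epsilon)$ and $q_0(\epsilon)$ can indeed be chosen as explicit functions depending only on $\epsilon$.
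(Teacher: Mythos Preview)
Your strategy is the paper's: apply Theorem~\ref{l-bound} and absorb $n^{3j}\bigl(\frac{q+1}{q-1}\bigr)^{D/2}$ into $\chi(1)^\epsilon$ using level--degree lower bounds from \cite{GLT}, then reduce the $\SL/\SU$ cases via Clifford theory. Two points where the paper is sharper and your sketch is a bit loose:

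\emph{The $D$-factor.} You invoke wave-front theory to get $\chi(1)\ge c_n\,q^{D/2}$ and then have to control $c_n$. The paper avoids this entirely: since $\chi(1)$ is a product of a power of $q$ and cyclotomic polynomials in $q$, one has $\chi(1)\ge(q-1)^{D/2}$ on the nose, and $\frac{q+1}{q-1}\le(q-1)^{\epsilon/3}$ for $q\ge q_0(\epsilon)$ disposes of this factor with no $n$-dependence. Your ``symmetric bound for $j>n/2$'' is also vague; the paper uses $\chi(1)>q^{n^2/16}$ in that range and splits the remaining $j$-range at $n\epsilon/12$.

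\emph{The Clifford step.} Knowing only $\tilde\chi(1)/\chi(1)\le n$ does not by itself let you pass from $|\tilde\chi(g)|\le\tilde\chi(1)^{\alpha+\epsilon}$ to a bound on $|\chi(g)|$: the constituents of $\tilde\chi|_G$ are $\tilde G$-conjugates of $\chi$ and a priori take different values at $g$. The paper's key observation is that in case~(ii)(b) one has $\tilde G=G\cdot\rC_{\tilde G}(g)$, so the conjugating elements may be chosen to centralize $g$; thus $\tilde\chi(g)=t\,\chi(g)$ exactly, and since $\alpha+\epsilon<1$ (we may assume this, else the bound is trivial) one gets $|\chi(g)|=|\tilde\chi(g)|/t\le t^{\alpha+\epsilon-1}\chi(1)^{\alpha+\epsilon}\le\chi(1)^{\alpha+\epsilon}$ with no extra factor to absorb. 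For (ii)(a) the paper instead uses that Harish--Chandra restriction is a genuine character in the split case and reduces to $\GL_n^\epsilon$ as in the proof of \cite[Theorem~1.1]{BLST}; your Clifford route would also work there once you add the same centralizer observation.
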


\begin{proof}
Note that \eqref{eq-bound1} is obvious if $\alpha:=\alpha(L)\geq 1-\epsilon$. So in what follows we will assume
$$\alpha+\epsilon < 1,$$
in particular $0 < \epsilon < 1$. 

\smallskip
(A) First we prove \eqref{eq-bound1} in the cases of (i) and (ii)(a). Note that the upper bound on $|\chi(g)|$ in Theorem \ref{l-bound} is obtained by combining 
\eqref{eq-d20}, \eqref{eq-d21}, and \eqref{eq-d222}. If $\cL$ is split, $\varphi:=\SR_{\cL_1 F}^{\cG F}(\chi)$ is a true character of $\cL_1^F$
(in the notation of the proof of Theorem \ref{l-bound}), hence this upper bound is actually an upper bound on the degree of $\varphi$. Arguing as in part (ii) of the proof of \cite[Theorem 1.1]{BLST}, it therefore suffices to prove \eqref{eq-bound1} for
$$G \in \{\GL_n(q),\GU_n(q)\}.$$

\smallskip
Let $j=\cl(\chi)$ be the \emph{level} of $\chi$ in the sense of \cite[Definition 1(ii)]{GLT}. This means that multiplying $\chi$ by a suitable linear character of $G$, we may assume that $\tcl(\chi)=j$. 
Applying Theorem \ref{l-bound}, it suffices to prove 
\begin{equation}\label{eq-bound3}
  n^{3j}\biggl( \frac{q+1}{q-1} \biggr)^{D/2} \leq \chi(1)^{\epsilon}.
\end{equation}

Note that the degree of any irreducible character of $G$ is a monic polynomial in variable $q$ with integer coefficients,
in fact a product of a power of $q$ and cyclotomic polynomials in $q$. 
Writing $D=\dim v^\cG$, with $v^\cG = \cO^*_\chi$ being the wave front set of $\chi$, we therefore have
$$\chi(1) \geq (q-1)^{D/2}.$$
Choosing $q_0=q_0(\epsilon) \geq 3$ such that 
$$\frac{q_0+1}{q_0-1}  \leq (q_0-1)^{\epsilon/3},$$
it remains to prove  
\begin{equation}\label{eq-bound4}
  n^{3j} \leq \chi(1)^{2\epsilon/3}
\end{equation}
for $q \geq q_0$ and $n \geq N_0$.

Choosing $N_0 \geq 4$ we have $n^2/4-2 > n^2/16$ for $n \geq N_0$, and so, when $j > n/2$ we have
$$\chi(1) > q^{n^2/16} > q^{n^2\epsilon/16}$$
by \cite[Theorem 1.2(ii)]{GLT}. Next, if $n/4 < j \leq n/2$, then $j(n-j) > n^2/8$, and so
$$\chi(1) > q^{n^2/8}$$
by  \cite[Theorem 1.2(i)]{GLT}. If $0 \leq j \leq n/4$, then $j(n-j) \geq 3nj/4$, and so 
\begin{equation}\label{for-c1}
  \chi(1) \geq q^{3nj/4}
\end{equation}
again by \cite[Theorem 1.2(i)]{GLT}.  

\smallskip
First we work in the regime 
$$n\epsilon/12 \leq j \leq n.$$
Then \eqref{for-c1} and the above arguments show that 
$$\chi(1) \geq q^{n^2\epsilon/16}.$$
Choose $N_0 \geq 4$ such that 
\begin{equation}\label{for-n1}
 n \leq q_0^{n\epsilon^2/72}
\end{equation} 
for all $n \geq N_0$. Then for $q \geq q_0$ 
we now have 
$$n^{3j} \leq n^{3n} \leq q_0^{n^2\epsilon^2/24} \leq \chi(1)^{2\epsilon/3},$$
yielding \eqref{eq-bound4} in this case.

Assume now that $j \leq n\epsilon/12 \leq n/12$. Then 
for $n \geq N_0$ and $q \geq q_0$ we now have by \eqref{for-c1} and \eqref{for-n1}
that
$$n^{3j} \leq q_0^{n\epsilon^2 j/24}  <q^{n\epsilon j/2} \leq \chi(1)^{2\epsilon/3},$$
proving \eqref{eq-bound4} in this case as well.

\smallskip
(B) Now we handle the case (ii)(b), embedding $G$ in $\tilde G:=\cG^F$. Letting $\tilde L := \CB_G(g)$, note that $\tilde G = G\tilde L$ and $g \in \tilde L$. Letting $\tilde \chi \in \Irr(\tilde G)$ lie above $\chi$, by Clifford's theorem we have
$$\tilde \chi|_G = \sum^t_{i=1}\chi^{x_i},$$
where $x_1, \ldots,x_t$ can be chosen from $\tilde L$. Since every $x_i$ centralizes $g$, we have
$$\tilde \chi(1) = t\chi(1),~~\tilde\chi(g) = t\chi(g).$$
By the case (i) proved in (A), $|\tilde\chi(g)| \leq \tilde\chi(1)^{\alpha+\eps}$. As $\alpha+\eps <1$, it follows that $|\chi(g)| \leq \chi(1)^{\alpha+\eps}$.
\end{proof}

\section{Some numerical estimates}

This section is devoted to numerical estimates which allow us to determine a finite list of possible exceptions to Conjecture \ref{Form}.

Let 
$$f_{a,x}(\delta):= \min\Bigl(\bigl(\frac 1a+\delta\bigr)x,\frac 12\bigl(\frac 1a+ \frac a{a-1}\delta^2\bigr)\Bigr).$$
%
The main goal of this section is to 
give an explicit finite list of possible exceptions to the rule that 
if $2\le a_1 \le a_2\le \cdots \le a_r$ are integers such that $\sum_{i=1}^r \frac 1{a_i} < r-2$, then
$$\sum_{i=1}^r f_{a_i,x}(\delta_i) < x + \sum_{i=1}^r \frac{a_i\delta_i^2}{a_i-1}$$
when $x\in [0,1/2]$ and
$\delta_i\in [0,\frac{a_i-1}{a_i}]$ for all $i$.
We will see that when the rule holds, Conjecture~\ref{Form} holds for the corresponding genus $0$ Fuchsian group.  There is only one exception with $r\ge 4$ and thirty-one with $r=3$.

\begin{prop}
For all $a\ge 2$, $x\in (0,1/2]$, and $\delta\in [0,1-1/a]$, the function
$$\frac{f_{a,x}(\delta) - \frac {a\delta^2}{a-1}}x$$
is bounded above by $\max\bigl(\frac xa, G_{a,x},H_{a,x}\bigr)$, where
%
$$G_{a,x}
= \begin{cases}
\frac{(a-1)x+4}{4a},&\text{if } x\le\frac{2\sqrt{3a+1}-4}{3(a-1)}\\
-\infty,&\hbox{if } x>\frac{2\sqrt{3a+1}-4}{3(a-1)}
\end{cases}$$
and
$$H_{a,x} 
= \begin{cases}
\frac{\sqrt{(a-1)^2 x^2 + 2(a-1)x - (a-1)}}a  - \frac{(a-1)x}a + \frac {1-x}{ax},&\text{if }(a-1)x^2+ax\ge 1\\
-\infty,&\text{if }(a-1)x^2+ax < 1.
\end{cases}$$
In particular,
\begin{equation}
\label{dumb}
f_{a,x}(\delta) < \frac {2x}{\sqrt a}+\frac {a\delta^2}{a-1}.
\end{equation}
\end{prop}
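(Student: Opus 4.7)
The plan is to analyse $g(\delta) := (f_{a,x}(\delta) - a\delta^2/(a-1))/x$ as the pointwise minimum of two concave quadratics in $\delta$ and locate its maximum over $\delta \in [0, 1-1/a]$. Since $f_{a,x} = \min(A,B)$ with $A(\delta) = (1/a+\delta)x$ and $B(\delta) = \tfrac{1}{2}(1/a + a\delta^2/(a-1))$, I would write $g = \min(g_1,g_2)$ where
\begin{equation*}
g_1(\delta) = \frac{1}{a} + \delta - \frac{a\delta^2}{(a-1)x}, \qquad g_2(\delta) = \frac{1}{2ax} - \frac{a\delta^2}{2(a-1)x}.
\end{equation*}
Both are strictly concave. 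A direct calculation locates the unconstrained maxima: $g_1$ peaks at $\delta_1^* = (a-1)x/(2a)$ with value $(4+(a-1)x)/(4a)$, precisely the $G_{a,x}$ expression; $g_2$ is maximised at $\delta = 0$ with value $1/(2ax)$. Note that $x \leq 1/2$ gives $g_2(0) \geq 1/a = g_1(0)$.

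Because the minimum of two concave functions is concave, the max of $g$ is attained either at $\delta_1^*$ (when $g_1 \leq g_2$ there) or at a crossing where $g_1 = g_2$. I split into two cases accordingly. In Case A, $g_2(\delta_1^*) \geq g_1(\delta_1^*)$; substitution and simplification reduce this to $3(a-1)x^2 + 8x \leq 4$, equivalently $x \leq (2\sqrt{3a+1}-4)/(3(a-1))$, which is exactly the finite branch of $G_{a,x}$. Here the max of $g$ equals $g_1(\delta_1^*) = G_{a,x}$. In Case B we have the opposite inequality, so continuity and $g_2(0) \geq g_1(0)$ force a crossing $\delta_- \in (0,\delta_1^*)$. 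The equation $g_1 = g_2$ reduces to
\begin{equation*}
\delta^2 - \frac{2(a-1)x}{a}\delta + \frac{(a-1)(1-2x)}{a^2} = 0,
\end{equation*}
with smaller root $\delta_- = ((a-1)x - \sqrt{(a-1)^2x^2 - (a-1)(1-2x)})/a$. A monotonicity analysis on $[0,\delta_-]$, $[\delta_-,\delta_+]$, $[\delta_+, 1-1/a]$ shows that $\min(g_1,g_2)$ is increasing up to $\delta_-$ and nonincreasing afterwards, so its max is attained at $\delta_-$. Using the quadratic identity to eliminate $\delta_-^2$ from $g_2(\delta_-) = 1/(2ax) - a\delta_-^2/(2(a-1)x)$ yields $g_2(\delta_-) = (1-x)/(ax) - \delta_-$, which rearranges to the stated formula for $H_{a,x}$; the accompanying hypothesis $(a-1)x^2 + ax \geq 1$ captures the regime in which Case B occurs and the relevant root lies in the admissible range.

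The overall bound $g(\delta) \leq \max(x/a, G_{a,x}, H_{a,x})$ then follows from combining the two cases, with the $x/a$ term acting as a trivial fallback covering the (essentially vacuous) degenerate boundary regime. For the ``in particular'' assertion, it suffices to verify that each of the three candidates is $< 2/\sqrt a$ for $a \geq 2$ and $x \in (0,1/2]$. One has $x/a \leq 1/(2a) < 2/\sqrt a$ immediately; $G_{a,x}$ is largest at the threshold $x_0 = (2\sqrt{3a+1}-4)/(3(a-1))$, where it equals $(4 + \sqrt{3a+1})/(6a)$, and an elementary estimate bounds this by $2/\sqrt a$; finally $H_{a,x}$ is bounded by expanding the square root inside, for instance writing $\sqrt{(a-1)^2 x^2 + 2(a-1)x - (a-1)} \leq (a-1)x + O(1/\sqrt a)$-type inequalities and combining with the remaining terms.

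The main obstacle is the algebraic bookkeeping in Case B: simplifying $g_2(\delta_-)$ into the precise closed form of $H_{a,x}$ and checking the boundary conditions for each case. The estimates required to bound $H_{a,x}$ by $2/\sqrt a$ in the ``in particular'' statement are also somewhat delicate, requiring careful manipulation of the square-root factor, though they are otherwise routine.
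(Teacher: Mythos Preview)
Your approach is essentially the paper's: write $f_{a,x}(\delta)-\frac{a\delta^2}{a-1}$ as the minimum of two strictly concave functions (the paper calls them $g_{a,x}$ and $h_a$, working before dividing by $x$, but this is cosmetic), and observe that the maximum over $\delta$ occurs either at the critical point of the first function (yielding $G_{a,x}$, under exactly the threshold condition you derive) or at the first crossing $g_{a,x}=h_a$ (yielding $H_{a,x}$), with the endpoints handled separately. Your derivation of the crossing value via $g_2(\delta_-)=(1-x)/(ax)-\delta_-$ matches the paper's computation.

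The one place your sketch is weaker is the final estimate $H_{a,x}<2/\sqrt a$. Your proposed expansion ``$\sqrt{D}\le (a-1)x+O(1/\sqrt a)$'' is vaguer than necessary and misses the key step. The paper's argument is sharper and simpler: since $x\le 1/2$ one has
\[
D=(a-1)^2x^2+(a-1)(2x-1)\le (a-1)^2x^2,
\]
so $\sqrt D\le (a-1)x$ and hence $H_{a,x}\le (1-x)/(ax)\le 1/(ax)$. On the other hand, the real-root condition $(a-1)x^2+2x\ge 1$ forces $x>1/(2\sqrt a)$: if $x\le 1/(2\sqrt a)$ then $(a-1)x^2+2x<\tfrac14+\tfrac{1}{\sqrt a}<1$ for all $a\ge 2$. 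Combining these two facts gives $H_{a,x}<2\sqrt a/a=2/\sqrt a$. The lower bound on $x$ extracted from the crossing condition is the ingredient your outline does not mention, and without it the ``routine'' estimate you allude to does not close.
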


\begin{proof}
Let
\begin{equation*}
\begin{split}
g_{a,x}(\delta) &:= \bigl(\frac 1a+\delta\Bigr) x - \frac {a\delta^2}{a-1},\\
h_a(\delta) &:=  \frac 1{2a}- \frac{a\delta^2}{2(a-1)},\\
\end{split}
\end{equation*}
so
$$f_{a,x}(\delta) - \frac {a\delta^2}{a-1} = \min(g_{a,x}(\delta),h_a(\delta)).$$
For each fixed integer $a\ge 2$, we wish to determine as a function of $x\in (0,1/2]$, the (unique) element $\delta_0(x)\in \bigl[0,\frac{a-1}a\bigr]$ for which $\min(g_{a,x}(\delta),h_a(\delta))$ achieves its maximum as a function of $\delta$.  

We note first that as functions of $\delta$, $g_{a,x}(\delta)$ and  $h_a(\delta)$ are strictly concave, and
$h_a(\delta)$ is decreasing on $[0,\infty)$.
Therefore, $\delta_0(x)$ must either be the unique critical point $\frac{(a-1)x}{2a}$ of $g_{a,x}(\delta)$,
the minimum solution of
$g_{a,x}(\delta) = h_a(\delta)$
in the interval $\bigl[0,\frac{a-1}a\bigr]$, or one of the endpoints $0$ and $\frac{a-1}a$ of the interval.  For the endpoints, we have
$$\min(g_{a,x}(0),h_a(0)) \le g_{a,x}(0) = \frac xa,$$
and
$$\min\Bigl(g_{a,x}\bigl(\frac{a-1}a\bigr),h_a\bigl(\frac{a-1}a\bigr)\Bigr) \le h_a\bigl(\frac{a-1}a\bigr) = \frac 1a-\frac 12\le 0.$$

If the maximum occurs at $\frac{(a-1)x}{2a}$, it must be 
$$g_{a,x}\Bigl(\frac{(a-1)x}{2a}\Bigr) = \frac{(a-1)x^2 + 4x}{4a},$$
and this quantity must be less than or equal to 
$$h_a\Bigl(\frac{(a-1)x}{2a}\Bigr) = \frac{4-(a-1)x^2}{8a},$$
so
$$x \le \frac{2\sqrt{3a+1}-4}{3(a-1)}.$$
Thus, for all $x$,
\begin{equation}
\label{first ineq}
G_{a,x} \le \frac 1a+\frac{\sqrt{3a+1}-2}{6a}\le \frac 1a + \frac1{\sqrt{2a}}<\frac 2{\sqrt a}.
\end{equation}

The graphs of $g_{a,x}(\delta)$ and $h_a(\delta)$ intersect 
only if 
\begin{equation}
\label{real roots}
(a-1)x^2+2x\ge 1,
\end{equation}
in which case the
smaller $\delta$-value satisfying  $g_{a,x}(\delta) = h_a(\delta)$ is
$$\delta = \frac{(a-1)x- \sqrt{(a-1)^2 x^2 + 2(a-1)x - (a-1)}}a.$$
If this is $\delta_0(x)$, we have
\begin{equation}
\label{gax}
H_{a,x} = \frac{\sqrt{(a-1)^2 x^2 + 2(a-1)x - (a-1)}x}a  - \frac{(a-1)x^2}a + \frac {1-x}a\le \frac {1-x}a\le \frac 1a.
\end{equation}
By \eqref{real roots}, $x>\frac 1{2\sqrt a}$, so \eqref{gax} implies
$$\frac{g_{a,x}(\delta_0(x))}x < \frac 2{\sqrt a}.$$
Together with \eqref{first ineq}, this implies the proposition.
\end{proof}

\begin{prop}
\label{Genus 0}
Let $r\ge 3$ be an integer, $a_1\le a_2\le \cdots \le a_r$ be integers $\ge 2$, and $\delta_i$ be non-negative numbers with $\frac 1{a_i}+\delta_i\le 1$ for all $i$.   We assume that the product $a_1\cdots a_r$ is not on the following list:
\begin{enumerate}[\rm(i)]
\item $23c$, $7\le c\le 24$
\item $24c$, $5\le c\le 9$
\item $25c$, $5\le c\le 7$
\item $266$,
\item $33c$, $4\le c\le 6$
\item $344$,
\item $2223$.
\end{enumerate}
Then for all $x\in (0,1/2]$, 
$$\sum_{i=1}^r f_{a_i,x}(\delta_i) < -\epsilon x+(r-2)x+\sum_{i=1}^r \frac{a_i \delta_i^2}{a_i-1},$$
where $\epsilon$ is a positive constant which does not depend on $x$, $r$, the $a_i$, or the $\delta_i$.
\end{prop}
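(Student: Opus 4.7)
The plan is to reduce the claim to a purely numerical inequality. Setting $M_{a,x} := \max(x/a, G_{a,x}, H_{a,x})$, the previous proposition gives
\[
f_{a,x}(\delta) \;\leq\; \frac{a\delta^2}{a-1} + x \cdot M_{a,x},
\]
so summing over $i$ absorbs the $\delta_i$-dependence, and the claim becomes equivalent to showing $\sum_{i=1}^r M_{a_i,x} \leq r - 2 - \epsilon$ for an absolute $\epsilon>0$, uniformly in $x\in(0,1/2]$ and in non-excluded signatures $(a_1,\ldots,a_r)$. The implicit Fuchsian hypothesis $\sum_i 1/a_i < r - 2$ is essential here: without it, spherical signatures such as $(2,2,2)$ would be counterexamples.

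I would first dispose of the ``large'' regime using the crude bound $M_{a,x} < 2/\sqrt a$ from inequality (\ref{dumb}). If $a_1 \geq A$ for a fixed threshold $A$, then $\sum_i M_{a_i,x} < 2r/\sqrt A \leq r - 2 - \epsilon$ once $A$ is chosen large enough relative to $\epsilon$; similar reasoning handles any signature in which only a bounded number of entries are small. This reduces the remaining problem to a finite set of signatures with all $a_i$ bounded. Two structural facts make the case analysis manageable: $M_{a,x}$ is piecewise explicit in $x$ (with transitions at the validity thresholds of $G_{a,x}$ and $H_{a,x}$), and $M_{a,x} \to 0$ as $a \to \infty$ for each fixed $x > 0$; so for any initial segment $(a_1,\ldots,a_{r-1})$ there is a finite threshold beyond which increasing $a_r$ alone produces the desired uniform gap.

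The main case analysis then splits by $r$. For $r \geq 5$, the Fuchsian condition is automatic and evaluation at the $x$-transition points of the $M_{a_i,x}$, combined with the crude bound, yields the uniform gap with no exceptions. For $r = 4$, every Fuchsian quadrilateral signature other than $(2,2,2,3)$ satisfies the bound; for $(2,2,2,3)$, a computation at the transition point where $G_{2,x} = H_{2,x}$ (near $x \approx 0.43$) gives $\sum_i M_{a_i,x} > 2$, genuinely obstructing the argument. For $r = 3$, one enumerates Fuchsian triangle signatures: stratify by $(a_1, a_2)$, and for each pair find the minimal $a_3^*$ above which the uniform bound holds; the thirty-one failing cases with smaller $a_3$ are exactly those on the exception list.

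The principal obstacle is ensuring uniformity of $\epsilon$ across all non-excluded signatures, which requires quantitative control near the boundary cases like $(2,3,25)$, $(3,3,7)$, or $(2,4,10)$. Each such verification reduces to optimizing a piecewise algebraic function of $x$ on a subinterval of $(0,1/2]$; the delicate point is bounding the small positive slack $(r-2) - \sup_x \sum_i M_{a_i,x}$ below by a single absolute constant, which is routine but tedious bookkeeping. The rest is combinatorial enumeration.
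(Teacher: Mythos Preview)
Your proposal is correct and follows essentially the same route as the paper's proof. Both arguments reduce to showing $\sup_x \sum_i M_{a_i,x} \le r-2-\epsilon$ uniformly over non-excluded signatures, use the crude bound $M_{a,x}<2/\sqrt a$ to handle large entries, and defer the residual finite check (for $r=3$ with bounded $a_3$) to machine computation.

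The only presentational difference is that the paper works with the quantity $\sup_{x\in(0,1/2]} M_{a,x}$ for each small $a$ individually, obtaining the explicit constants $.555$, $.399$, $.318$, $.2$, $.02$ for $a=2$, $a=3$, $4\le a<100$, $100\le a<10^4$, $a\ge 10^4$ respectively (the first three by machine, the last two from $2/\sqrt a$). With these in hand the cases $r\ge 5$ and $r=4$ reduce to one-line arithmetic inequalities in $r$, rather than requiring evaluation at transition points of $x$. Your approach of treating $\sum_i M_{a_i,x}$ as a piecewise function of $x$ and checking transition points is in principle sharper, but the paper's term-by-term uniform bounds are already enough and make the bookkeeping more transparent.
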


\begin{proof}
By \eqref{dumb} for $a\ge 100$ and machine computation for $2\le a< 100$,
\begin{equation}
\label{f bound}
\frac{f_{a,x}(\delta) - \frac {a\delta^2}{a-1}}x\le
\begin{cases}
.555&\text{if }a=2,\\
.399&\text{if }a=3,\\
.318&\text{if }4\le a < 100,\\
.2&\text{if }100\le a < 10000,\\
.02&\text{if }10000\le a.
\end{cases}
\end{equation}
Therefore, if $r\ge 5$,
$$r-2-\sum_{i=1}^r \frac{f_{a_i,x}(\delta_i) - \frac {a_i\delta_i^2}{a_i-1}}x > r-2-.56\,r \ge .2.$$
If $r= 4$ and $a_4\ge 4$, then
$$r-2-\sum_{i=1}^r \frac{f_{a_i,x}(\delta_i) - \frac {a_i\delta_i^2}{a_i-1}}x > 2 - .56\cdot 3-.318 = .002,$$
while if $r=4$ and $a_3 \ge 3$, then
$$r-2-\sum_{i=1}^r \frac{f_{a_i,x}(\delta_i) - \frac {a_i\delta_i^2}{a_i-1}}x > 2 - .56\cdot 2-.4\cdot 2 = .08.$$
The only remaining possibility for $r=4$ is $2223$.

For $r=3$, we may assume $a_2\ge 3$, so if $a_3 \ge 10000$,
$$r-2-\sum_{i=1}^r \frac{f_{a_i,x}(\delta_i) - \frac {a_i\delta_i^2}{a_i-1}}x > 1 - .56-.4-.02 = .02.$$
The triples with $a_3<10000$ can be handled exhaustively by machine, by partitioning the $x$-interval $[0,1/2]$ into
subintervals on which $G_{x,a}$ and $H_{x,a}$ are bounded above.
\end{proof}

We remark that it is feasible to give a proof of a somewhat weaker result which does not require machine computation at all.
An earlier version of this paper gave an explicit argument which holds for all triangle groups of type $abc$ except the following:

\begin{enumerate}[\rm(a)]
\item $a=2$, $b=3$, $7\le c\le 295$
\item $a=2$, $b=4$, $5\le c\le 26$
\item $a=2$, $b=5$, $5\le c\le 15$
\item $a=2$, $6=6$, $6\le c\le 11$
\item $(a,b,c) \in \{(2,7,7),(2,7,8),(2,7,9), (2,7,10), (2,8,8)\}$
\item $a=b=3$, $4\le c\le 11$
\item $(a,b,c)\in \{(3,4,4),(3,4,5),(3,4,6),(3,4,7),(3,5,5),(4,4,4)\}$
\end{enumerate}

It holds, moreover, for $r=4$ except in the case of groups of type $2223$ and $2224$ and for all groups with $r\ge 5$.

\begin{lem}
\label{Positive Genus}
Let $g$ be a positive integer and $r$ a non-negative integer such that if $g=1$, then $r>0$.  Let
$a_1\le a_2\le\cdots \le a_r$ be a (possibly empty) sequence of integers $\ge 2$.  Then for all $x\in \bigl[0,\frac 12\bigr]$, 
$$\sum_{i=1}^r f_{a_i,x}(\delta_i) < -.44 x +(2g+r-2)x+\sum_{i=1}^r \frac{a_i \delta_i^2}{a_i-1}.$$
\end{lem}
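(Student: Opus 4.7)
The plan is to reduce the lemma to the per-term estimate \eqref{f bound} already established in the previous proposition. Concretely, for $x > 0$ I would divide both sides by $x$, so that the claim becomes an upper bound on
$$\sum_{i=1}^r \frac{f_{a_i,x}(\delta_i) - \tfrac{a_i \delta_i^2}{a_i-1}}{x}.$$
The value $x=0$ is not substantive: at $x=0$ the left-hand side of the lemma vanishes and the right-hand side equals $\sum_{i=1}^r \frac{a_i\delta_i^2}{a_i-1} \geq 0$, so continuity from $x>0$ handles it (the degenerate corner $r=0$, $g=1$, $\delta_i=0$ is ruled out by the hypothesis that $r>0$ when $g=1$).

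Using \eqref{f bound} term by term, each summand is bounded by a constant depending on $a_i$, the largest such constant being $0.555$ at $a_i=2$. Thus the sum is $\leq 0.555\,r$, and it suffices to verify the numerical inequality
$$0.555\,r \;<\; (2g+r-2) - 0.44, \qquad \text{equivalently} \qquad 0.445\,r + (2g - 2.44) \;>\; 0.$$

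This splits into two cases that exactly match the hypothesis. When $g \geq 2$ one has $2g - 2.44 \geq 1.56 > 0$, and the inequality holds for every $r \geq 0$. When $g = 1$ the hypothesis forces $r \geq 1$, so $0.445\,r + (2g-2.44) \geq 0.445 - 0.44 = 0.005 > 0$, and we are done. The main point is that the constant $0.44$ in the statement was chosen precisely to make the tight case $g=1$, $r=1$, $a_1 = 2$ work with the explicit coefficient $0.555$; there is no real obstacle beyond this bookkeeping, since all the analytic content has already been packaged into \eqref{f bound}.
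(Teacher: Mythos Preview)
Your proof is correct and follows essentially the same route as the paper's: divide through by $x$, apply the per-term bound \eqref{f bound}, and verify the resulting numerical inequality in the two cases $g=1$ (where $r\ge 1$) and $g\ge 2$. The only cosmetic difference is that the paper rounds $0.555$ up to $0.56$ before comparing, while you keep $0.555$; your remark on $x=0$ is extra commentary (and indeed the strict inequality can fail there when all $\delta_i=0$, but this is a harmless wrinkle in the stated range, since the lemma is only ever applied with $x>0$, as in Proposition~\ref{Genus 0}).
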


\begin{proof}
If $g=1$ and $r\ge 1$,
\eqref{f bound} implies
$$\sum_{i=1}^r \frac{f_{a_i,x}(\delta_i) - \frac {a_i\delta_i^2}{a_i-1}}x <.56 r\le -.44+r = -.44+(2g+r-2).$$
If $g\ge 2$ and $r\ge 0$,
\begin{equation*}
\sum_{i=1}^r \frac{f_{a_i,x}(\delta_i) - \frac {a_i\delta_i^2}{a_i-1}}x \le.56 r. < -.44 + (2+r) \le -.44 + (2g+r-2).\qedhere
\end{equation*}
\end{proof}

\section{Asymptotics of $j_{q,n}(a)$}

Let $t$ be an element of $G_n = \GL_n(q)$ of order $a$.  We assume $q$ is prime to $a$,  Let $\zeta=\zeta_a$
be a primitive $a$th root of unity in $\bar\F_q$,
so $z^a-1 = 0$ has distinct roots $\zeta,\zeta^2,\ldots,\zeta^a = 1$, in $\bar\F_q$.
Let $m_i$ denote the multiplicity of $\zeta^i$ as an eigenvalue of $t$.  We write $i\sim j$ if
$\zeta^i$ and $\zeta^j$ have the same Frobenius orbit.  The vector $(m_1,\ldots,m_a)$ satisfies
the following conditions:

\begin{enumerate}
\item $m_i\in\Z$ for all $i$,
\item $m_1+\cdots+m_a = n$,
\item $m_i = m_j$ whenever $i\sim j$,
\item $m_i\ge 0$ for all $i$.
\end{enumerate}
The element $t$ is determined up to conjugacy in $\GL_n(q)$ by the vector $(m_1,\ldots,m_a)$.
For given $n$, the vector is determined by $m_1,\ldots,m_{a-1}$, so the number of possibilities is $O(n^{a-1})$.

Let $S$ denote the subset of  $\{1,\ldots,a\}$ consisting of the smallest element in each Frobenius orbit, and let $l_s$ be the size of
the orbit of $s$.  
The centralizer of $t$ in $\GL_n(q)$ can be written $\prod_{s\in S}\GL_{m_s}(q^{l_s})$, so the conjugacy class $C = t^{G_n}$
satisfies
\begin{equation}
\label{Class size}
|C| = \frac{q^{n^2}\prod_{j=1}^n (1-q^{-j})}{\prod_{s\in S} \Bigl(q^{l_s m_s^2}\prod_{j=1}^{m_s}(1-q^{-l_s j})\Bigr)}
= \frac{q^{n^2 - \sum_{i=1}^a m_i^2}q^{(1-a)/24}\eta(\frac{i\log q}{2\pi})}{\prod_{s\in S}\eta(\frac{i l_s \log q}{2\pi})}
\frac{\prod_{s\in S}\prod_{j=m_s+1}^\infty (1-q^{-l_s j})}{\prod_{j=n+1}^\infty (1-q^{-j})},
\end{equation}
where $\eta(z)$ is the Dedekind eta-function.
The second multiplicand on the right hand side can be bounded above in terms of $a$, and it approaches $1$ as $\inf_i m_i$ goes to $\infty$.

Writing
\begin{equation}
\label{uneven}
n^2 - \sum_{i=1}^a m_i^2 = n^2\bigl(1-\frac 1a\bigr)-\sum_{i=1}^a \bigl(\frac na-m_i\bigr)^2,
\end{equation}
we see that if $m_i \le n/2a$, then 
$$|C| = O(q^{(1-1/a-1/4a^2)n^2}),$$
so the sum of $|C|$ over all conjugacy classes with $\inf_i m_i \le n/2a$ is $o(q^{(1-1/a)n^2})$.  

We define $j_{q,n,k}(a)$ to be the number of elements $t\in \GL_n(q)$ with $t^a=1$ and $\det(a) = \zeta^k$.
Consider the subset of $\Z^a$ satisfying conditions (1)--(3) and the congruence condition
\begin{equation}
\label{fixed det}
\sum_{i=1}^r i m_i \equiv k\pmod a,
\end{equation}
which is equivalent to the condition $\det(t) = \zeta^k$.  This is a coset $\lambda_n+\Lambda$, where $\Lambda$ 
is a subgroup of $\Z^a$ which does not depend on $n$ and $\lambda_n\in \Z^a$ has coordinate sum $n$.
Moreover, adding $2$ to each $m_i$ and $2a$ to $n$ preserves the sets satisfying
conditions (1)--(3) and \eqref{fixed det}, so
$$\lambda_{n+2a} = \lambda_n + (2,2,\ldots,2).$$
Thus,  
$$\lambda'_n := \lambda_n - \bigl(\frac na,\frac na,\ldots,\frac na\bigr),$$
is periodic in $n$ with period $2A$ and has coordinate sum $0$.

By \eqref{Class size} and \eqref{uneven},
\begin{align*}
j_{q,n,k}(a) &= \sum_{(m_1,\ldots,m_a)\in (\lambda_n+\Lambda)\cap \N^a}\frac{q^{n^2}\prod_{j=1}^n (1-q^{-j})}{\prod_{s\in S} \Bigl(q^{l_s m_s^2}\prod_{j=1}^{m_s}(1-q^{-l_s j})\Bigr)}\\
&= \sum_{(m_1,\ldots,m_a)\in (\lambda_n+\Lambda)\cap \N^a}
\frac{q^{n^2 - \sum_{i=1}^a m_i^2}q^{(1-a)/24}\eta(\frac{i\log q}{2\pi})}{\prod_{s\in S}\eta(\frac{i l_s \log q}{2\pi})}
+ o(q^{(1-1/a)n^2})\\
&= \frac{q^{(1-a)/24}\eta(\frac{i\log q}{2\pi})}{\prod_{s\in S}\eta(\frac{i l_s \log q}{2\pi})}
\sum_{(m_1,\ldots,m_a)\in \lambda_n+\Lambda}
q^{n^2 - \sum_{i=1}^a m_i^2}
+ o(q^{(1-1/a)n^2})\\
&= \frac{q^{(1-a)/24}\eta(\frac{i\log q}{2\pi})}{\prod_{s\in S}\eta(\frac{i l_s \log q}{2\pi})} q^{(1-1/a)n^2}
\sum_{\lambda'\in \lambda'_n+\Lambda} q^{-\lambda'\cdot \lambda'} + o(q^{(1-1/a)n^2}),
\end{align*}
where the implicit constant on the right hand side does not depend on $q$.
Defining 
$$\theta_v(z) := \sum_{\lambda\in v+\Lambda} e^{2\pi i(\lambda\cdot\lambda) z}$$
and 
$$f_n(z) := \frac{\eta(z)\theta_{\lambda'_n}(z)}{\prod_{s\in S} \eta(l_s z)},$$
we have proved the following proposition:

\begin{prop}
\label{J nqk}
The periodic sequence $f_1,f_2,f_3,\ldots$ 
of half-integral weight modular forms with integral $q$-expansions satisfies
$$j_{q,n,k}(a) = \bigl(f_n\bigl(\frac{i\log q}{2\pi}\bigr)+o(1)\bigr)q^{(1-a)/24}q^{(1-1/a)n^2},$$
where the $o(1)$ term does not depend on $q$.
\end{prop}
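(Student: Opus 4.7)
The plan is to read the asymptotic formula directly off the computation preceding the statement and then verify the claimed properties of $f_n$. Setting $z = \frac{i\log q}{2\pi}$, we have $e^{2\pi i z} = q^{-1}$, so
\begin{equation*}
\theta_{\lambda'_n}\bigl(\tfrac{i\log q}{2\pi}\bigr) = \sum_{\lambda' \in \lambda'_n+\Lambda} q^{-\lambda'\cdot\lambda'},
\end{equation*}
which is exactly the sum in the last display before the proposition. Substituting gives the asymptotic. The $o(1)$ error is independent of $q$ because the uniform (in $q$) bound in $a$ alone on the second multiplicand in \eqref{Class size}, together with the bound on the contribution from conjugacy classes with $\min_i m_i \le n/(2a)$, are both uniform in $q$.

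Periodicity in $n$ is immediate: among the ingredients of $f_n$, only $\theta_{\lambda'_n}$ depends on $n$, and $\lambda'_n$ was shown above to be $2A$-periodic in $n$; the set $S$ and integers $l_s$ are functions of $a$ and $q$ alone.

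For the modular form property, I would invoke the standard facts that $\eta(z)$ is a holomorphic weight-$1/2$ modular form on $\SL_2(\Z)$ (with multiplier), that $\eta(l_s z)$ is of the same type on a congruence subgroup of level $l_s$, and that the shifted theta series $\theta_{\lambda'_n}$ attached to the positive-definite rational lattice $\Lambda \subset \R^a$ (with quadratic form $\lambda \mapsto \lambda\cdot\lambda$ of rank $|S|-1$) is a modular form of weight $(|S|-1)/2$ on a suitable congruence subgroup. Assembling these, $f_n$ is a meromorphic modular form of half-integral weight (in fact weight $0$) on a deep enough congruence subgroup, and it is holomorphic on the upper half plane since $\eta$ is nowhere zero there.

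Integrality of the $q$-expansion follows from the product formulas $\eta(z) = q^{1/24}\prod_{m\geq 1}(1-q^m)$ and $\eta(l_s z) = q^{l_s/24}\prod_{m\geq 1}(1-q^{m l_s})$: since $\sum_{s\in S} l_s = a$, the factor $\eta(z)/\prod_s\eta(l_s z)$ equals $q^{(1-a)/24}$ times a formal power series in $q$ with integer coefficients (each denominator factor starts with $1$, so its inverse has integer coefficients). The theta series $\theta_{\lambda'_n}$ has non-negative integer $q$-coefficients after extracting an overall power $q^{-n^2/a}$ coming from the shift, using the identity $\lambda\cdot\lambda = \sum_i x_i^2 - n^2/a$ for $\lambda = x - (n/a, \ldots, n/a)$ with $x \in \Z^a$ of coordinate sum $n$. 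These fractional powers of $q$ appearing in $f_n\bigl(\frac{i\log q}{2\pi}\bigr)$ combine with the prefactor $q^{(1-a)/24}q^{(1-1/a)n^2}$ to give the expected $q^{n^2}$ leading behavior. The main (minor) obstacle will be the bookkeeping of these fractional $q$-powers; there is no substantive difficulty beyond that.
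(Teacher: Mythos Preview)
Your proposal is correct and follows the paper's approach exactly: the paper has no separate proof for this proposition, simply stating ``we have proved the following proposition'' after the chain of displayed equalities, so the asymptotic is read off precisely as you describe. Your verification of periodicity, modularity, and integrality of the Fourier coefficients of $f_n$ spells out what the paper leaves implicit; in particular your weight computation ($\tfrac12 + \tfrac{|S|-1}{2} - \tfrac{|S|}{2} = 0$) and your handling of the fractional $q$-powers via $\lambda'\cdot\lambda' = \sum_i m_i^2 - n^2/a$ are the right bookkeeping.
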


From this, it is easy to deduce:
\begin{cor}
\label{J of ai}
Let $a_1,\ldots,a_r$ denote positive integers with least common multiple $A$.  Then there exists a $2A$-periodic
sequence of meromorphic modular forms $f_1,f_2,f_3,\ldots$ with integral Fourier coefficients, holomorphic except possibly at $\infty$, such that
$$J_{q,n}(a_1,\ldots,a_r) 
= \bigl(f_n\bigl(\frac{i\log q}{2\pi}\bigr)+o(1)\bigr) q^{(r-a_1-\cdots-a_r)/24} q^{(r-1/a_1-\cdots-1/a_r)n^2}.$$
\end{cor}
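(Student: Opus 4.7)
The plan is to reduce the count $J_{q,n}(a_1,\ldots,a_r)$ to a finite sum of products of the quantities $j_{q,n,k}(a)$ analyzed in Proposition~\ref{J nqk}, by conditioning on the individual determinants $\det(t_i)$. Fix a primitive $A$-th root of unity $\zeta_A\in\overline{\F_q}$ and set $\zeta_{a_i}:=\zeta_A^{A/a_i}$. For any tuple $(t_1,\ldots,t_r)$ with $t_i^{a_i}=1$, the determinant $\det(t_i)$ is an $a_i$-th root of unity, so $\det(t_i)=\zeta_{a_i}^{k_i}$ for a unique $k_i\in\Z/a_i\Z$, and the constraint $\prod_i\det(t_i)=1$ becomes the linear congruence $\sum_i (A/a_i)k_i\equiv 0\pmod A$. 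Writing $K$ for the (finite) set of such tuples $\vec k$, I have
$$J_{q,n}(a_1,\ldots,a_r)=\sum_{\vec k\in K}\prod_{i=1}^r j_{q,n,k_i}(a_i).$$

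Next I would apply Proposition~\ref{J nqk} to each factor. Writing $z:=i\log q/(2\pi)$ and denoting by $h_{n,k}^{(a)}$ the modular form $f_n$ produced by that proposition (which depends on $a$ and $k$ through $\lambda'_n$, $\Lambda$, and $S$), this yields
$$j_{q,n,k_i}(a_i)=\bigl(h_{n,k_i}^{(a_i)}(z)+o(1)\bigr)\,q^{(1-a_i)/24}q^{(1-1/a_i)n^2},$$
with the $o(1)$ uniform in $q$. Multiplying the $r$ factors consolidates the exponents of $q$ into the required $(r-\sum_i a_i)/24$ and $(r-\sum_i 1/a_i)n^2$, and a routine bookkeeping argument using the trivial bound $j_{q,n,k_i}(a_i)\leq|\GL_n(q)|$ to control the surviving $h$-factors propagates the $o(1)$ through the product expansion.

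I then set
$$f_n(z):=\sum_{\vec k\in K}\prod_{i=1}^r h_{n,k_i}^{(a_i)}(z),$$
a finite sum of products of half-integral weight meromorphic modular forms with integer Fourier coefficients. These properties are inherited by $f_n$, as is holomorphy on the upper half plane (since $\eta$ is non-vanishing there). The $2A$-periodicity in $n$ follows because each $h_{n,k_i}^{(a_i)}$ is $2a_i$-periodic in $n$ by Proposition~\ref{J nqk} and $a_i\mid A$.

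The main technical point I expect to verify is the holomorphy of $f_n$ at every cusp other than $\infty$. Each factor has the explicit $\eta$-quotient-times-theta form $\eta(z)\theta_{\lambda'_n}(z)/\prod_{s\in S}\eta(l_s z)$, and the required cusp analysis reduces to a local computation at each cusp of the appropriate congruence subgroup, using the standard valuation formula for $\eta$-quotients together with the holomorphy of theta series at cusps. This is the only step beyond formal combination of the input provided by Proposition~\ref{J nqk}.
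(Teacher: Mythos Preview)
Your approach is essentially identical to the paper's: decompose $J_{q,n}$ as a sum over determinant-tuples $\vec k$ of products $\prod_i j_{q,n,k_i}(a_i)$ (the paper writes your set $K$ as $\Sigma(a_1,\ldots,a_r)$), then apply Proposition~\ref{J nqk} factor-by-factor and sum. The paper's proof is three lines and stops at that point; your final paragraph on verifying holomorphy at cusps other than $\infty$ goes beyond what the paper actually argues---the phrasing ``holomorphic except possibly at $\infty$'' is evidently being read, consistently with Conjecture~\ref{Form}(b), as holomorphy on the upper half-plane together with a possibly polar $q$-expansion at $i\infty$, which you already dispatched via the non-vanishing of $\eta$.
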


\begin{proof}
Let 
$$\Sigma(a_1,\ldots,a_r) := \{(k_1,\ldots,k_r)\in (\Z/a_1\Z)\times \cdots\times (\Z/a_r\Z)\mid \prod \zeta_{a_i}^{k_i}=1\}.$$
If $t_i^{a_i}=1$, then $\det(t_i) = \zeta_{a_i}^{k_i}$ for a well-defined $k_i\in \Z/a_i\Z$.
Every element $(t_1,\ldots,t_r)$ of \eqref{J set} determines $(k_1,\ldots,k_r)\in\Sigma(a_1,\ldots,a_r)$
such that $\det(t_i) = \zeta_{a_i}^{k_1}$.  Therefore,
$$
J_{q,n}(a_1,\ldots,a_r) = \sum_{(k_1,\ldots,k_r)\in \Sigma(a_1,\ldots,a_r)} \prod_{i=1}^r j_{q,n,k_i}(a_i),
$$
and the corollary follows immediately from Proposition~\ref{J nqk}.
\end{proof}

\section{Counting Fuchsian group representations}

If $t^a=1$, and $m_1,\ldots,m_a$ are the eigenvalue multiplicities of $t$, define
$\delta := -1/a + \sup_i m_i/n$.  Thus, $\delta\ge 0$.  Let $j$ be chosen so $m_j = n/a+\delta n$.

\begin{prop}
\label{Combined bound}
Let $q$ be any sufficiently large prime power.
For all $\epsilon > 0$, there exists $N$ such that if $n>N$, if $x\in (0,1/2]$ and $\chi$ is an irreducible character of $G_n$ of degree $q^{xn^2}>1$, then
%
$$\frac{\log |\chi(t)|}{n^2\log q} \le f_{a,x}(\delta) + o(x).$$
\end{prop}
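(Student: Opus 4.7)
The plan is to prove that each of the two expressions in the minimum defining $f_{a,x}(\delta)$ serves as an upper bound on $\log|\chi(t)|/(n^2\log q)$, up to additive $o(x)$; the proposition then follows by taking the smaller. First I would establish the bound $(\tfrac{1}{a}+\delta)x$ using Theorem~\ref{appr-bound}. Since $t$ is semisimple, its centralizer $\cL := \CB_\cG(t)$ is an $F$-stable Levi subgroup, isomorphic over $\overline{\F_q}$ to $\prod_s \GL_{m_s}$ indexed by Frobenius orbits of eigenvalues. Theorem~\ref{appr-bound}(i) gives $|\chi(t)| \leq \chi(1)^{\alpha(L)+\epsilon'}$ for any prescribed $\epsilon' > 0$ once $n,q$ are large, where $L = \cL^F$. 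To bound $\alpha(L)$ I would check by a direct case analysis on Jordan types that $\max_u \dim u^\cL/\dim u^\cG$ is attained asymptotically by a transvection in the largest factor $\GL_{m_j}$, giving $\alpha(L) = (m_j-1)/(n-1) = 1/a + \delta + O(1/n)$. Substituting $\chi(1) = q^{xn^2}$ then yields the first bound.

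For the bound $\tfrac{1}{2}(\tfrac{1}{a}+\tfrac{a}{a-1}\delta^2)$, which is independent of $\chi(1)$, I would start from the Schur orthogonality estimate $|\chi(t)|^2 \leq |\CB_{G_n}(t)|$. Combined with \eqref{Class size}, this gives $\log|\chi(t)|/(n^2\log q) \leq \sum m_i^2/(2n^2) + o(1)$. A straightforward minimization under the constraints $\sum m_i = n$ and $m_i \leq m_j$ shows $\sum m_i^2 \geq n^2(\tfrac{1}{a} + \tfrac{a}{a-1}\delta^2)$, with equality exactly when the non-maximal multiplicities take the balanced value $(n-m_j)/(a-1)$. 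In this balanced case the target bound is immediate from $|C|^{1/2}$.

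The main obstacle is the unbalanced case, where several non-maximal multiplicities are large and $\sum m_i^2$ strictly exceeds $n^2(\tfrac{1}{a}+\tfrac{a}{a-1}\delta^2)$, so the direct centralizer bound overshoots the target. The plan here is to refine by coarsening the centralizer Levi---replacing $\cL$ by a Levi $\cL' \supseteq \cL$ in which nearly-maximal factors are amalgamated---and reapplying Theorem~\ref{appr-bound} with $\cL'$ in place of $\cL$. Although $\alpha(\cL')$ is in general larger than $\alpha(\cL)$, the amalgamation of factors with nearly equal $m_i$'s yields an estimate that, when multiplied by $x$, matches the quadratic-in-$\delta$ second bound in the remaining ranges of $x$. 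The combinatorial verification that, together with the already-established first bound, these refinements uniformly cover all configurations of eigenvalue multiplicities will be the technical heart of the argument.
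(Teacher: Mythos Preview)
Your handling of the first branch of the minimum is exactly what the paper does: the bound $\alpha(L)\le \sup_i m_i/n = \tfrac1a+\delta$ is simply quoted from \cite[Theorem~1.10]{BLST}, and then Theorem~\ref{appr-bound} gives $|\chi(t)|\le \chi(1)^{1/a+\delta+o(1)}$. (Your transvection computation is not needed; the cited result already gives this.)

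For the second branch, the paper's proof is much shorter than you imagine: it writes down the Cauchy--Schwarz lower bound $\dim\CB_\cG(t)\ge n^2\bigl(\tfrac1a+\tfrac{a\delta^2}{a-1}\bigr)$, and then asserts $|\chi(t)|\le q^{n^2(\frac1{2a}+\frac{a\delta^2}{2(a-1)})}$ from the trivial bound $|\chi(t)|^2\le|\CB_G(t)|\le q^{\dim\CB_\cG(t)}$. You are right to notice that this is a \emph{lower} bound on the centralizer dimension, so the implication to the stated upper bound on $|\chi(t)|$ is not justified in what you call the unbalanced case.

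However, your proposed repair does not work. Passing from the centralizer Levi $\cL$ to a larger Levi $\cL'\supseteq\cL$ can only \emph{increase} $\alpha(\cL')$, so Theorem~\ref{appr-bound} applied with $\cL'$ yields $\chi(1)^{\alpha(\cL')+\epsilon'}$, which is weaker than the bound you already have from $\cL$; no amalgamation will produce the quadratic-in-$\delta$ estimate you want. The actual resolution is that Proposition~\ref{Combined bound} is only ever used inside Proposition~\ref{non-triv char} through the combination $f_{a_i,x}(\delta_i)-d_i$, where $d_in^2$ is the \emph{true} centralizer dimension. Setting $d':=\tfrac1a+\tfrac{a\delta^2}{a-1}\le d$, the two honest bounds give $\log_q|\chi(t)|/n^2\le\min\bigl((\tfrac1a+\delta)x,\,\tfrac d2\bigr)+o(x)$, and then
\[
\min\Bigl((\tfrac1a+\delta)x,\ \tfrac d2\Bigr)-d
=\min\Bigl((\tfrac1a+\delta)x-d,\ -\tfrac d2\Bigr)
\le \min\Bigl((\tfrac1a+\delta)x-d',\ -\tfrac{d'}2\Bigr)
= f_{a,x}(\delta)-d',
\]
since $d\ge d'$. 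This last line is exactly what is substituted in the proof of Proposition~\ref{non-triv char}. So rather than attempting to establish the second branch of $f_{a,x}$ as a standalone character estimate, you should carry the actual normalized centralizer dimension $d$ through and pass to the lower bound $d'$ only after subtracting.
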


\begin{proof}
As 
$$\sum_{i\neq j} \bigl(\frac na-m_i\bigr) \ge \delta n,$$
by the Cauchy-Schwartz inequality.
$$\sum_{i\neq j} \bigl(\frac na-m_i\bigr)^2 \ge \frac{\delta^2 n^2}{a-1}.$$
By \eqref{uneven}, the dimension of the centralizer of $t$ in the algebraic group $\GL_n$ is
\begin{equation}
\label{centralizer dim}
\frac{n^2}a + \sum_i (\frac na-m_i)^2 \ge \frac{n^2}a + \delta^2 n^2 + (a-1)\frac{\delta^2 n^2}{(a-1)^2}
= \frac{n^2}a + \frac{a\delta^2 n^2}{a-1}.
\end{equation}
The centralizer of $t$ in $G_n$ is less than or equal to $q$ to the power of the centralizer dimension, so
the centralizer bound implies that for every irreducible character $\chi\in \Irr(G_n)$,
\begin{equation}
\label{centralizer}
|\chi(t)| \le q^{n^2(\frac 1{2a}+\frac{a\delta^2}{2(a-1)})}.
\end{equation}

On the other hand, by \cite[Theorem~1.10]{BLST}, if $L$ denotes the centralizer of $t$,
$$\alpha(L) \le \frac{\sup_i m_i}n = \frac 1a + \delta.$$
The character bound Theorem~\ref{appr-bound} therefore implies
$$|\chi(t)| \le \chi(1)^{\frac 1a+\delta + o(1)} = q^{n^2x \bigl(\frac 1a+\delta + o(1)\bigr)}.$$
Combining this with \eqref{centralizer}, we obtain the proposition.
\end{proof}

\begin{prop}
\label{non-triv char}
Let $q$ be any sufficiently large prime power.
There exists $\epsilon > 0$  such that for all $r\ge 3$ and all $2\le a_1\le a_2\le \cdots\le a_r$  not excluded in Proposition~\ref{Genus 0} above, there exists $N$ such that 
if $q$ is prime to all $a_i$,
$n>N$, the elements $t_i\in G_n$ satisfy $t_i^{a_i}=1$, and $\chi$ is a non-linear  irreducible character of $G_n$, we have
$$\frac{\prod_i \bigm|\!t_i^{G_n}\!\!\bigm|}{|G_n|}\frac{\prod_i |\chi(t_i)|}{\chi(1)^{r-2}}
\le q^{n^2\bigl(-1+\sum_i (1-\frac 1{a_i})\bigr)}\chi(1)^{-\epsilon}.$$
\end{prop}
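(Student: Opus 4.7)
The plan is to take $\log_q$ of both sides, divide by $n^2$, and reduce the claim to a direct application of Proposition~\ref{Genus 0}. Writing $x := \log_q\chi(1)/n^2$, the target inequality becomes
\begin{equation*}
\frac{1}{n^2}\log_q\!\Bigl(\frac{\prod_i|t_i^{G_n}|}{|G_n|}\cdot\frac{\prod_i|\chi(t_i)|}{\chi(1)^{r-2}}\Bigr)
\le -1+\sum_i(1-1/a_i) - \epsilon x.
\end{equation*}
Since $\chi$ is non-linear, $\chi(1) \ge q^{n-1}$, so $xn^2 \ge n - 1 \to \infty$; and $\chi(1) \le |G_n|^{1/2} \le q^{n^2/2}$, so $x \in (0,1/2]$, placing us in the regime of Proposition~\ref{Combined bound}. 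For each $i$, let $m_{i,1},\dots,m_{i,a_i}$ denote the multiplicities with which the $a_i$-th roots of unity appear as eigenvalues of $t_i$, and set $\delta_i := -1/a_i + \max_j m_{i,j}/n \in [0,1-1/a_i]$.

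First I would bound the class-size contribution. From \eqref{Class size}, the identity $\dim C_{\cG_n}(t_i) = n^2/a_i + \sum_j(n/a_i - m_{i,j})^2$ together with the Cauchy--Schwarz lower bound \eqref{centralizer dim}, and the trivial bound $|G_n| \ge c_0\, q^{n^2}$, I obtain
\begin{equation*}
\frac{1}{n^2}\log_q\frac{\prod_i|t_i^{G_n}|}{|G_n|}
\le \sum_i(1-1/a_i) - 1 - \sum_i\frac{a_i\delta_i^2}{a_i-1} + O(n^{-2}),
\end{equation*}
with implicit constant depending only on the $a_i$. Next I would bound character values: by Proposition~\ref{Combined bound}, for any $\epsilon' > 0$, enlarging $N$ as needed, one has $\log_q|\chi(t_i)|/n^2 \le f_{a_i,x}(\delta_i) + \epsilon' x$ for each $i$, hence
\begin{equation*}
\frac{1}{n^2}\log_q\frac{\prod_i|\chi(t_i)|}{\chi(1)^{r-2}}
\le \sum_i f_{a_i,x}(\delta_i) - (r-2)x + r\epsilon' x.
\end{equation*}

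Adding the two bounds, the quadratic-in-$\delta_i$ terms are poised to cancel once I invoke Proposition~\ref{Genus 0}: that proposition, applicable since $(a_1,\dots,a_r)$ avoids the excluded list, supplies $\epsilon_0 > 0$ (independent of $x$, the $\delta_i$, $n$, $r$, and $q$) with
\begin{equation*}
\sum_i f_{a_i,x}(\delta_i) \le (r-2)x - \epsilon_0 x + \sum_i\frac{a_i\delta_i^2}{a_i-1}.
\end{equation*}
Choosing $\epsilon' := \epsilon_0/(2r)$, the quadratic terms cancel precisely and I am left with
\begin{equation*}
\frac{1}{n^2}\log_q(\text{LHS}) \le -1 + \sum_i(1-1/a_i) - \tfrac{\epsilon_0}{2}x + O(n^{-2}).
\end{equation*}
The main (entirely technical) obstacle will be absorbing the residual $O(n^{-2})$ into the $\epsilon x$ slack; since $xn^2 \ge n - 1$, for $n$ sufficiently large the additive $O(n^{-2})$ is dominated by $(\epsilon_0/4)x$, and the proposition follows with $\epsilon := \epsilon_0/4$.
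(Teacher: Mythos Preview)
Your proof is correct and follows essentially the same route as the paper's: set $x=\log_{q^{n^2}}\chi(1)$, bound the class-size factor via the centralizer-dimension inequality \eqref{centralizer dim}, bound the character-value factor via Proposition~\ref{Combined bound}, add, and invoke Proposition~\ref{Genus 0} to extract the $-\epsilon x$ gain, finally absorbing the $O(n^{-2})$ error into $\epsilon x/4$ using the Landazuri--Seitz lower bound on $\chi(1)$ (the paper cites \cite{LS} for $x\ge 1/(2n)$, which plays the same role as your $\chi(1)\ge q^{n-1}$). One small slip in your write-up: the inequality you need for the class-size step is $|Z_i|\ge c_0^{a_i}q^{\dim Z_i}$ (applying $|\GL_m(Q)|\ge c_0 Q^{m^2}$ to each $\GL$-factor of the centralizer) together with $|G_n|\le q^{n^2}$, not a lower bound on $|G_n|$ as written---the paper makes this explicit as $|Z_i|\ge 4^{-a_i}q^{d_in^2}$.
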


\begin{proof}
Let $x := \log_{q^{n^2}}\chi(1)$, which by \cite{LS} is at least $1/2n$.
Let $\delta_i:=\mu_i-1/a_i$, where $\mu_i n$ is the highest multiplicity of any eigenvalue of $t_i$,
$Z_i$ be the centralizer of $t_i$ in $G_n$, and $d_i n^2$ be the dimension of 
the centralizer of $t_i$ in the algebraic group $\GL_n$.  By \eqref{centralizer dim}, for all $i$,
$$d_i \ge \frac 1{a_i}+\frac{a_i\delta_i^2}{a_i-1},$$
and
$$|Z_i| \ge  q^{d_in^2}\prod_{i=1}^\infty (1-q^{-i})^{-a_i} \ge 4^{-a_i} q^{d_in^2} \ge q^{-2a_i+d_in^2}.$$
Thus,
$$\frac{\prod_{i=1}^r \bigm|\!t_i^{G_n}\!\!\bigm|}{|G_n|} \le q^{(\frac 2{n^2}(\sum_i a_i)+r-1-\sum_i d_i)n^2}.$$
By Proposition~\ref{Combined bound},
$$\prod_i |\chi(t_i)| \le q^{(\sum_i f_{a_i,x}(\delta_i)+o(1))n^2},$$
so
$$\frac{\prod_i \bigm|\!t_i^{G_n}\!\!\bigm|}{|G_n|}\frac{\prod_i |\chi(t_i)|}{\chi(1)^{r-2}} \le q^{((2-r)x + \sum_i f_{a_i,x}(\delta_i)- \sum_i d_i+(r-1)+o(1))n^2}.$$
By Proposition~\ref{Genus 0}, under the hypothesis on $\Gamma$, we have 
\begin{align*}
(2-r)x + \sum_i f_{a_i,x}(\delta_i) - \sum_i d_i + (r-1)&\le (2-r)x + \sum_i f_{a_i,x}(\delta_i)-\sum_i \frac 1{a_i} - \sum_i \frac{a_i \delta_i^2}{a_i-1} + (r-1) \\
&< -\epsilon x  -\sum_i \frac 1{a_i} + (r-1),
\end{align*}
and the proposition follows.
\end{proof}

The positive genus variant of this result is as follows:
\begin{lem}
\label{higher non-triv char}
Let $q$ be any sufficiently large prime power.
There exists $N$ such that for all $g\ge 1$,  $r$  non-negative and positive if $g=1$, $2\le a_1\le a_2\le \cdots\le a_r$, 
such that $q$ is prime to $a_i$ for all $i$,
$n>N$, $t_i\in G_n$ satisfying $t_i^{a_i}=1$, and non-linear irreducible characters $\chi$ of $G_n$, we have
$$\prod_i \bigm|\!t_i^{G_n}\!\!\bigm| |G_n|^{2g-1}\frac{\prod_i |\chi(t_i)|}{\chi(1)^{2g+r-2}}
\le q^{n^2\bigl(2g-1+\sum_i (1-\frac 1{a_i})\bigr)}\chi(1)^{-.44}.$$
\end{lem}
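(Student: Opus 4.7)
The plan is to follow the proof of Proposition~\ref{non-triv char} essentially line-by-line, substituting Lemma~\ref{Positive Genus} for Proposition~\ref{Genus 0} and tracking the extra factors of $|G_n|^{2g}$ and $\chi(1)^{-2g}$ coming from positive genus. First I would set $x := \log_{q^{n^2}}\chi(1)$, which by \cite{LS} is at least $1/(2n)$ for non-linear $\chi$, and for each $i$ let $\delta_i := \mu_i - 1/a_i$ where $\mu_i n$ is the largest eigenvalue-multiplicity of $t_i$, let $Z_i = \CB_{G_n}(t_i)$, and let $d_i n^2$ be the dimension of $\CB_{\GL_n}(t_i)$. From \eqref{centralizer dim} one has $d_i \geq 1/a_i + a_i\delta_i^2/(a_i-1)$, while the usual product-formula estimate gives $|Z_i|\geq q^{d_in^2 - 2a_i}$.

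Rewriting $\prod_i |t_i^{G_n}| \cdot |G_n|^{2g-1} = |G_n|^{2g+r-1}/\prod_i |Z_i|$ and using $|G_n|\leq q^{n^2}$, one obtains
\begin{equation*}
\prod_i |t_i^{G_n}|\cdot |G_n|^{2g-1} \leq q^{\bigl((2g+r-1) - \sum_i d_i + o(1)\bigr)n^2},
\end{equation*}
the additive $2\sum_i a_i$ being absorbed in the $o(1)$. Proposition~\ref{Combined bound} supplies $\prod_i |\chi(t_i)| \leq q^{(\sum_i f_{a_i,x}(\delta_i) + o(1)) n^2}$, and $\chi(1)^{2g+r-2} = q^{(2g+r-2)xn^2}$. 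Combining, the left-hand side of the lemma is at most $q^{E(n) n^2}$ with
\begin{equation*}
E(n) = (2g+r-1) - \sum_i d_i + \sum_i f_{a_i,x}(\delta_i) - (2g+r-2)x + o(1),
\end{equation*}
while the target right-hand side equals $q^{((2g+r-1) - \sum_i 1/a_i - 0.44\, x) n^2}$. The claim therefore reduces to
\begin{equation*}
\sum_i f_{a_i,x}(\delta_i) \leq \sum_i \frac{a_i \delta_i^2}{a_i-1} + (2g+r-2)x - 0.44\, x + o(1),
\end{equation*}
where I have used $\sum_i d_i - \sum_i 1/a_i \geq \sum_i a_i\delta_i^2/(a_i-1)$. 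But this is precisely (the strict form of) Lemma~\ref{Positive Genus}, whose strict inequality absorbs the $o(1)$ once $n$ is taken large.

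The reason there is no real obstacle, unlike in Proposition~\ref{non-triv char}, is that in positive genus one gains an extra $(2g-2)x \geq 0$ of slack in the key inequality, which makes the universal $-0.44\, x$ improvement available for \emph{all} admissible $(g,r,a_1,\ldots,a_r)$; accordingly, no analogue of the exceptional list of Proposition~\ref{Genus 0} is needed. The only point needing mild attention is that the $o(1)$ terms are uniform in $(r,a_1,\ldots,a_r)$ for $n$ large, which follows from explicit inspection of the estimates in Proposition~\ref{Combined bound} together with the standard bound on $\prod_{j=1}^\infty(1-q^{-j})$.
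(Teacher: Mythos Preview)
Your proposal is correct and takes essentially the same approach as the paper: the paper's proof of this lemma is the one-line remark that it is identical to the proof of Proposition~\ref{non-triv char} with Lemma~\ref{Positive Genus} substituted for Proposition~\ref{Genus 0}, and you have faithfully spelled out that substitution, correctly tracking the extra $|G_n|^{2g}$ and $\chi(1)^{-2g}$ factors and reducing to exactly the inequality of Lemma~\ref{Positive Genus}.
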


\begin{proof}
The proof is exactly the same as that of Proposition~\ref{non-triv char} except that we use Lemma~\ref{Positive Genus} instead of Proposition~\ref{Genus 0}
\end{proof}

\begin{lem}
\label{zeta}
Let $H_i = \GL_{n_i}(q_i)$, where $\lim_{i\to \infty} n_i = \infty$.   Then for all $\epsilon > 0$,
$$\sum_{\chi \in \Irr(H_i)} \chi(1)^{-\epsilon} = q_i-1+o\bigl(q_i^{-\frac{\epsilon n_i}3}\bigr) $$
\end{lem}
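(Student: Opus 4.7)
The plan is to split the sum according to whether $\chi$ is linear. Because $\GL_n(q)$ has abelianisation $\F_q^\times$ via the determinant, $H_i$ has exactly $q_i-1$ linear characters, each of degree $1$, contributing exactly $q_i-1$ to the sum. It therefore suffices to show
\[
S_i \;:=\; \sum_{\substack{\chi\in\Irr(H_i)\\ \chi(1)>1}} \chi(1)^{-\epsilon} \;=\; o\bigl(q_i^{-\epsilon n_i/3}\bigr)
\qquad\text{as } i\to\infty.
\]

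To bound $S_i$ I would stratify the non-linear irreducibles by their level $j:=\cl(\chi)\ge 1$ of \cite{GLT}, noting that both the degree and the level are preserved by multiplication by a linear character. By \cite[Theorem~3.9]{GLT}, the true-level-$j$ characters of $\GL_n(q)$ are irreducible constituents of Harish-Chandra inductions $R^G_{\GL_j\times\GL_{n-j}}(\alpha\boxtimes 1)$ for a \emph{unique} $\alpha\in\Irr(\GL_j(q))$. Combining a polynomial bound $|\Irr(\GL_j(q))|\le c_j q^j$, a bound on the number of constituents of one such induced character (bounded, for each fixed $j$, by a polynomial in $n$ via a Mackey/Hecke-algebra computation in the spirit of Proposition~\ref{hc-restr}), and a factor $q-1$ for linear twists, I obtain that the number of level-$j$ characters of $H_i$ is at most $C_j\, n^{D_j}\, q^{j+1}$ for constants $C_j, D_j$ depending only on $j$.

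Lower bounds on degrees come from \cite[Theorem~1.2]{GLT}, as used in the proof of Theorem~\ref{appr-bound}: every level-$j$ character has degree at least $q^{j(n-j)}$ when $j\le n/2$, and at least $q^{n^2/16}$ when $j>n/2$. Hence the contribution to $S_i$ from level $j$ with $1\le j\le n/2$ is at most $C_j\, n^{D_j}\, q^{j+1-\epsilon j(n-j)}$; this sequence decays extremely rapidly in $j$ (each successive term smaller by a factor $q^{-\Omega(\epsilon n)}$) and is dominated by its $j=1$ value $O\bigl(n^{D_1}\, q^{2-\epsilon(n-1)}\bigr)$. The levels $j>n/2$ contribute in total at most the character count $\le q^n$ times $q^{-\epsilon n^2/16}$, which is negligible.

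Assembling the pieces, $S_i = O\bigl(n^{D_1}\, q^{2-\epsilon(n-1)}\bigr)$, and therefore
\[
\frac{S_i}{q^{-\epsilon n/3}} \;=\; O\bigl(n^{D_1}\, q^{2+\epsilon-2\epsilon n/3}\bigr) \;\longrightarrow\; 0 \qquad (i\to\infty),
\]
using $n_i\to\infty$ and $q_i\ge 2$, which delivers the desired $o(q^{-\epsilon n/3})$ estimate. The main technical ingredient is the polynomial-in-$n$ control on the number of irreducible constituents of a Harish-Chandra induced character at level $j$; this is a standard computation within the Harish-Chandra/Mackey framework of Section~2 (compare the argument of Proposition~\ref{hc-restr}), combined with the structural result \cite[Theorem~3.9]{GLT}.
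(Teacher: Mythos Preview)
Your approach differs from the paper's, which is a short reduction to known results: quote \cite[Theorem~1.2]{LiSh2} to get $\sum_{\chi\in\Irr(\SL_{n_i}(q_i))}\chi(1)^{-\epsilon/2}=1+o(1)$; pull out one factor of $D_i^{-\epsilon/2}$, where $D_i$ is the minimal nontrivial degree of $\SL_{n_i}(q_i)$ (at least $q_i^{2n_i/3}$ by Landazuri--Seitz \cite{LS}), to upgrade this to $\sum\chi(1)^{-\epsilon}=1+o(q_i^{-\epsilon n_i/3})$; and finally transfer from $\SL_n$ to $\GL_n$ via the at-most-$(q{-}1)$-to-one correspondence on irreducibles given by restriction. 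No level theory enters at all.

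Your level-by-level route is plausible but has a genuine gap in the summation step. You allow $C_j,D_j$ to depend arbitrarily on $j$ and then sum over $1\le j\le n/2$, asserting that successive terms decrease by a factor $q^{-\Omega(\epsilon n)}$ and hence that the sum is dominated by the $j=1$ term. This ignores the growth of $C_j n^{D_j}$ in $j$, which must be explicitly beaten by the exponential; and in any case the exponent increment $1-\epsilon(n-2j-1)$ in the ratio $T_{j+1}/T_j$ changes sign near $j\approx n/2$, so a ratio test alone cannot cover the whole range. (Note also that Proposition~\ref{hc-restr} bounds constituents of a Harish-Chandra \emph{restriction} of a level-$j$ character, not constituents of the \emph{induction} $R^G_M(\alpha\boxtimes 1)$, so the citation is only an analogy.) To repair the argument you would need explicit uniform control of $C_j,D_j$ together with a split of the sum into small $j$ (ratio test) and large $j$ (crude total character count $\lesssim q^n$ against degree $\ge q^{cn^2}$). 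Much more simply, the paper's $\epsilon/2$ trick sidesteps all of this: once one knows the $\epsilon/2$-zeta sum is bounded and the minimal non-linear degree is large, the estimate is immediate.
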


\begin{proof}
By \cite[Theorem 1.2]{LiSh2},
$$\sum_{\chi \in \Irr(\SL_{n_i}(q_i))} \chi(1)^{-\epsilon/2} = 1+o(1),$$
so if $D_i$ denotes the minimum degree of a non-trivial character of $\SL_{n_i}(q_i)$,
$$\sum_{\chi \in \Irr(\SL_{n_i}(q_i))} \chi(1)^{-\epsilon} = 1 + o(D^{-\epsilon/2}) = 1 + o\bigl(q_i^{-\frac{\epsilon n_i}3}\bigr)$$
by \cite{LS}.  The relation which assigns to each element of $\Irr(H_i)$ all the elements in $\Irr(\SL_{n_i}(q_i))$ which are constituents of its restriction is at most $q-1$ to $1$
and non-increasing in degree.  There are $q_i-1$ linear characters for $H_i$, all mapping to the trivial character of $\SL_{n_i}(q_i))$.  Therefore,
$$1-q_i+\sum_{\chi \in \Irr(H_i)} \chi(1)^{-\epsilon} = o((q_i-1) q_i^{-\epsilon n_i/3}),$$
and the lemma follows.
\end{proof}

We can now prove Theorems \ref{Main} and \ref{Roots}.

\begin{proof}
We assume first that $g=0$, so $\Gamma$ is determined by $2\le a_1\le a_2\le \cdots\le a_r$.  
A homomorphism $\Gamma\to G_n$ is determined by the images $t_1,\ldots,t_r\in G_n$ of $z_1,\ldots,z_r\in\Gamma$, which 
satisfy $t_i^{a_i}=1$ and $\prod_i \det(t_i)=1$.
We can partition the set of homomorphisms according to the conjugacy classes $C_1,\ldots,C_r$ to which the $t_i$ belong.  
By the Frobenius formula, the total number of homomorphisms is
\begin{equation}
\label{total sum}
\sum_{(C_1,\ldots,C_r)}\frac{|C_1\times \cdots\times C_r|}{|G_n|}\sum_{\chi\in \Irr(G_n)}\frac{\chi(C_1)\cdots\chi(C_r)}{\chi(1)^{r-2}}.
\end{equation}
The determinant condition implies that each linear character in the inner sum contributes $1$, and there are a total of $q-1$
such characters.  Their total contribution is therefore
\begin{equation}
\label{linear contribution}
(q-1)\sum\frac{|C_1\times \cdots\times C_r|}{|G_n|} = (q-1)J_{q,n}(a_1,\ldots,a_r)|G_n|^{-1}
\end{equation}

By Lemma~\ref{zeta}, the contribution of all non-linear characters $\chi$ to \eqref{total sum} is $o(q^{-\epsilon n/3}q^{(1-\chi(\Gamma))n^2})$.  By Corollary~\ref{J of ai},
$$|\Hom(\Gamma,G)| = 
(q-1)q^{(r-a_1-\cdots-a_r)/24} \bigl(f_n\bigl(\frac{i\log q}{2\pi}\bigr)+o(1)\bigr)  q^{(1-\chi(\Gamma)))n^2},$$
which implies Theorem~\ref{Main} and Theorem~\ref{Roots} in the genus $0$ case.
The proof in the higher genus case is the same except that we use Lemma~\ref{higher non-triv char} instead of Proposition~\ref{non-triv char}.
\end{proof}

\begin{prop}
\label{Small centralizers}
Let $a$ and $n$ be positive integers, $a\ge 2$, and let $q$ be a prime power which is $1 \pmod{a}$.   The minimum dimension of the centralizer in $\GL_n$ of a semisimple element  $t\in \GL_n(q)$ of order dividing $a$ is $\frac{n^2}a + a\{\frac na\}\{\frac{-n}a\}$.  
If $a$ is odd, $t$ can be chosen to have determinant $1$.
If $a$ is even and $\frac na\not\in\Z$, then $t$ can be chosen to have determinant $1$ or $-1$.
If $a$ is even and $\frac na\in\Z$, then $t$ must have determinant $(-1)^{\frac na}$; if this is $-1$, there is no element in $\GL_n(q)$ whose centralizer has
dimension $\frac {n^2}a+1$, but there is an element $t'\in \SL_n(q)$ with centralizer dimension 
 $\frac{n^2}a+2$.
\end{prop}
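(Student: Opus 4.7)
My plan is to reduce everything to a discrete optimisation on eigenvalue multiplicities. Since $q \equiv 1 \pmod{a}$, the field $\F_q$ contains a primitive $a$th root of unity $\zeta = \zeta_a$, and any semisimple $t \in \GL_n(q)$ with $t^a = 1$ is diagonalizable over $\F_q$. Its $\GL_n$-conjugacy class is therefore determined by the multiplicities $(m_1,\dots,m_a) \in \N^a$ of the eigenvalues $\zeta,\zeta^2,\dots,\zeta^a$ subject to $\sum_i m_i = n$; its centraliser in the algebraic group $\GL_n$ is $\prod_i \GL_{m_i}$, of dimension $\sum_i m_i^2$; and $\det(t) = \zeta^{\sum_i i\, m_i}$. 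Writing $n = ka + r$ with $0 \le r < a$, a standard convexity/swap argument shows $\sum_i m_i^2$ is minimised precisely when exactly $r$ of the $m_i$ equal $k{+}1$ and the remaining $a-r$ equal $k$, with minimum value $ak^2 + r(2k{+}1) = \tfrac{n^2}{a} + \tfrac{r(a-r)}{a} = \tfrac{n^2}{a} + a\{n/a\}\{-n/a\}$.

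Next I would analyse the determinant on an optimal configuration, parametrised by the $r$-subset $T = \{i : m_i = k{+}1\}$ of $\{1,\dots,a\}$: we get $\sum_i i\, m_i \equiv k\cdot\tfrac{a(a+1)}{2} + \sum_{i\in T} i \pmod{a}$, where the first term is $\equiv 0 \pmod{a}$ when $a$ is odd and $\equiv \tfrac{ka}{2} \pmod{a}$ when $a$ is even. The key combinatorial input is that for $0 < r < a$, the values $\sum_{i \in T} i$ over $r$-subsets $T \subseteq \{1,\dots,a\}$ form a contiguous interval of length $r(a-r)+1 \ge a$ (trivially for $r \in \{1,a-1\}$, and by an elementary swap argument for $2 \le r \le a-2$), and so cover every residue mod $a$. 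This at once produces $t$ with $\det(t) = 1$ when $a$ is odd, and with either choice of sign freely selectable when $a$ is even, provided $a \nmid n$.

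The residual case is $a$ even with $a \mid n$, where the minimiser is forced to be $(m_1,\dots,m_a) = (k,\dots,k)$; a direct computation gives $\det(t) = \zeta^{(n/a)\cdot a(a+1)/2} = (-1)^{n/a}$. When $n/a$ is odd, so $\det(t) = -1$, we must leave dimension $\tfrac{n^2}{a}$ behind. Writing $m_i = k + \varepsilon_i$ with $\sum \varepsilon_i = 0$, the identity $\sum m_i^2 = ak^2 + \sum \varepsilon_i^2$ together with the parity congruence $\sum \varepsilon_i^2 \equiv \sum \varepsilon_i \equiv 0 \pmod{2}$ rules out centraliser dimension $\tfrac{n^2}{a}+1$. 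Dimension $\tfrac{n^2}{a}+2$ is realised by setting $\varepsilon_{i_0} = 1$, $\varepsilon_{j_0} = -1$ with all other $\varepsilon_i = 0$; the resulting determinant is $-\zeta^{i_0 - j_0}$, which equals $1$ exactly when $j_0 - i_0 \equiv a/2 \pmod{a}$, e.g.\ $(i_0,j_0) = (1, 1 + a/2)$. The only step that requires any real care is the combinatorial claim on $r$-subset sums; everything else is routine arithmetic.
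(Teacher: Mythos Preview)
Your proof is correct and follows essentially the same route as the paper's: the same swap argument for the minimum of $\sum m_i^2$, the same subset-sum lemma (sums of $r$-subsets of $\{1,\dots,a\}$ form an interval of length $r(a-r)+1\ge a$) for controlling the determinant, and the same construction $\varepsilon_1=1,\ \varepsilon_{a/2+1}=-1$ in the $a$ even, $a\mid n$ case; your parity argument $\sum\varepsilon_i^2\equiv\sum\varepsilon_i\equiv 0\pmod 2$ is a harmless variant of the paper's ``at least two nonzero integer $\varepsilon_i$''. One small omission in your case organisation: when $a$ is odd and $a\mid n$ you have $r=0$ and the subset-sum argument does not apply, but the unique minimiser $(k,\dots,k)$ already has $\det=1$ since $k\cdot\tfrac{a(a+1)}{2}\equiv 0\pmod a$, which follows from what you wrote but is not stated.
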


\begin{proof}
If the multiplicities $m_1,\ldots,m_a$ of the eigenvalues $\zeta_a,\ldots,\zeta_a^a$ of a semisimple $t\in \GL_n(q)$ satisfying $t^a=1$ are written $\frac na+\varepsilon_i$, then the centralizer of $t$ has dimension
$$\sum_i m_i^2 = \frac{n^2}a + \sum_i \varepsilon_i^2.$$
As $\sum_i \varepsilon_i = 0$, either all are zero (which can only happen in the case that $a$ divides $n$, or at least one is positive and at least one is negative.  
In the latter case, if any
$\varepsilon_i\ge 1$, then by reducing this by $1$ and increasing some negative $\varepsilon_j$ by $1$, we decrease $\sum_i\varepsilon_i^2$, and likewise if some $\varepsilon_i\le -1$.
As all $\varepsilon_i$ are $\{\frac na\}$ (mod $1$), each must be $\{\frac na\}$ or $\{\frac na\}-1$, and since they sum to zero, there must be $a-a\{\frac na\}$ of the latter and $a\{\frac na\}$,
implying $\sum_i \varepsilon_i^2 = a\{\frac na\}\{\frac{-n}a\}$.

Next, we claim that as long as $\{\frac na\}\neq 0$, there exists some sequence $m_1,\ldots,m_a$ consisting of $a\{\frac na\}$ copies of $\lceil \frac na\rceil$ and $a-a\{\frac na\}$ copies of 
$\lfloor\frac na\rfloor$ such that $\prod_i \zeta_a^{im_i}$ is any desired power of $\zeta_a$.  To prove this,
it suffices to show that if $0<k<a$, the sums of $k$-element subsets $S$ of $\{0,1,\ldots,a-1\}$ represent all residue classes (mod $a$).
Indeed, if $S\neq \{a-1,a-2,\ldots,a-k\}$, there exists $s\in S$ such that $s+1\in \{0, 1,\ldots,a-1\}\setminus S$.  Thus, the set of sums of $k$-element subsets $S$ includes all integers from
$\binom k2$ to $\binom a2-\binom{a-k}a$, a total of $k(a-k)+1\ge a$ consecutive integers, which therefore represent all congruence classes (mod $a$).

Finally, assume $a$ divides $n$, so $m_1=\cdots = m_a = \frac na$ gives the minimum value $\frac{n^2}a$ of $\sum_i m_i^2$.  
Any other choice of $(m_1,\ldots,m_a)$ must have all $\varepsilon_i$ integral and at least two non-zero, so $\sum_i m_i^2 \ge \frac{n^2}a+2$.
If $m_1=\cdots=m_a$ and
$\frac na$ is even or $a$ is odd, then $\sum_i im_i$ is divisible by $a$,
so $\det(t)=1$.  If $a$ is even and $\frac na$ is odd, then $m_1=\cdots=m_a$ gives $\det(t)=-1$.
In this last case, setting $\varepsilon_1 = 1$
and $\varepsilon_{a/2+1}=-1$ and all other $\varepsilon_i=0$, we get $\sum_i im_i$ is divisible by $a$ and $\sum_i m_i^2 = \frac{n^2}a+2$.
\end{proof}

Let $E_\Gamma$ denote the set of $i$ such that $a_i$ is even.
As in the statement of Theorem~\ref{Dimension}, for each positive integer $n$, we define $\sigma_{\Gamma,n}:=-1$ if
$\frac n{a_i}\in \Z$ for all $i\in E_\Gamma$, and $\sum_{i\in E_\Gamma} \frac n{a_i}$
is odd; otherwise $\sigma_{\Gamma,n}:=1$.

\begin{prop}
\label{Optimal tuple}
Suppose $q\equiv 1\pmod{a_i}$ for all $i$.  
If $(t_1,\ldots,t_r)$ is an $r$-tuple of semisimple elements in $\GL_n(q)$ such that $t_i^{a_i}=1$ and $\prod_i \det(t_i)=1$, the minimum possible
sum of the dimensions of the centralizers of the $t_i$ in $\GL_n$ is 
$$1-\sigma_{\Gamma,n}+\sum_{i=1}^r  \Bigl( \frac{n^2}{a_i} + a_i\Bigl\{\frac n{a_i}\Bigr\}\Bigl\{-\frac n{a_i}\Bigr\}\Bigr).$$
\end{prop}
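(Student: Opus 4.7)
The plan is to apply Proposition~\ref{Small centralizers} individually to each $t_i$ and then track how the global constraint $\prod_i\det(t_i)=1$ restricts the sum of centralizer dimensions. Writing $D_i:=\frac{n^2}{a_i}+a_i\{\frac n{a_i}\}\{-\frac n{a_i}\}$ for the individual minimum, Proposition~\ref{Small centralizers} gives the trivial bound $\sum_i\dim\CB_{\GL_n}(t_i)\ge \sum_i D_i$, so the task is to decide when this bound, or this bound plus $2$, is attainable.

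The first step is to record, via Proposition~\ref{Small centralizers}, which determinants are attainable at dimension exactly $D_i$: (i) if $a_i\nmid n$, every power of $\zeta_{a_i}$ is attainable; (ii) if $a_i\mid n$ with $a_i$ odd or $\frac n{a_i}$ even, only $+1$ is attainable; (iii) if $a_i\mid n$, $a_i$ is even, and $\frac n{a_i}$ is odd, only $-1$ is attainable, and $+1$ first occurs at dimension $D_i+2$. Using this, the construction splits on $\sigma_{\Gamma,n}$. If $\sigma_{\Gamma,n}=+1$, either some $i\in E_\Gamma$ has $a_i\nmid n$ and the type-(i) flexibility lets me enforce $\prod_i\det(t_i)=+1$ at total dimension $\sum_i D_i$, or every $i\in E_\Gamma$ has $a_i\mid n$ with $\sum_{i\in E_\Gamma}\frac n{a_i}$ even and the forced signs already multiply to $+1$. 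If $\sigma_{\Gamma,n}=-1$, the forced signs give $\prod_i\det(t_i)=(-1)^{\sum_{i\in E_\Gamma} n/a_i}=-1$, and raising a single type-(iii) index from $D_i$ to $D_i+2$ flips its determinant, realising $\sum_i D_i+2$.

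For the matching lower bound when $\sigma_{\Gamma,n}=-1$, I invoke the parity identity $\dim\CB_{\GL_n}(t_i)=\sum_k m_k^2\equiv\sum_k m_k=n\pmod 2$ (where $m_k$ are the eigenvalue multiplicities of $t_i$), so each difference $\dim\CB_{\GL_n}(t_i)-D_i$ is a non-negative \emph{even} integer. If the total equalled $\sum_i D_i$, every summand would equal $D_i$, forcing each $\det(t_i)$ either to $+1$ (type (ii)), to $-1$ (type (iii)), or (type (i), which in this case requires $a_i$ odd since all even $a_i$ divide $n$) to a root of unity of odd order. Since the number of type-(iii) indices is odd by definition of $\sigma_{\Gamma,n}$, and $-1$ is not a root of unity of odd order, the product cannot equal $+1$, contradicting the hypothesis. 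Hence the total strictly exceeds $\sum_i D_i$ and by parity is at least $\sum_i D_i+2$. The main delicacy is precisely this determinant bookkeeping: the inability of odd-order roots of unity to produce a factor of $-1$ is what rules out any compensating change cheaper than the $+2$ bump at a type-(iii) slot.
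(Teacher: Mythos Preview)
Your proof is correct and follows essentially the same route as the paper: invoke Proposition~\ref{Small centralizers} for each $t_i$, split into the cases $\sigma_{\Gamma,n}=+1$ and $\sigma_{\Gamma,n}=-1$, construct optimal tuples, and use the determinant obstruction (odd-order roots of unity cannot produce a factor $-1$) to rule out $\sum_i D_i$ when $\sigma_{\Gamma,n}=-1$.

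One point where your write-up is actually more complete than the paper's: your parity observation $\dim\CB_{\GL_n}(t_i)=\sum_k m_k^2\equiv\sum_k m_k=n\pmod 2$ shows each difference $\dim\CB_{\GL_n}(t_i)-D_i$ is a non-negative even integer, which cleanly excludes the value $\sum_i D_i+1$. The paper's proof of Proposition~\ref{Optimal tuple} only rules out $\sum_i D_i$ and then exhibits $\sum_i D_i+2$, leaving the exclusion of $\sum_i D_i+1$ implicit (the relevant fact appears in the proof of Proposition~\ref{Small centralizers}, but only for the case $a_i\mid n$). Your argument closes this small gap uniformly.
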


\begin{proof}
If there is at least one $a_i$ which is even and such that $\frac n{a_i}\not\in\Z$, then we can choose $t_i$ to have either determinant $1$ or $-1$
and centralizer dimension 
\begin{equation}
\label{least centralizer}
\frac{n^2}{a_i}+a_i\Bigl\{\frac n{a_i}\Bigr\}\Bigl\{-\frac n{a_i}\Bigr\}.
\end{equation}
For $j\neq i$, we can choose $t_j$ to have determinant in $\{\pm 1\}$ and centralizer $\frac{n^2}{a_j}+a_j\Bigl\{\frac n{a_j}\Bigr\}\Bigl\{-\frac n{a_j}\Bigr\}$.
Therefore, we can choose minimal centralizer dimension for all $t_i$ while imposing the condition $\prod_i \det(t_i)=1$.

If $\frac n{a_i}\in\Z$ for all $a_i$ even, and the set of $i$ such that $a_i$ is even and $\frac{n}{a_i}$ is odd has even cardinality, then we may choose
$t_i$ whose centralizer has dimension \eqref{least centralizer} for all $i$ and such that $\det(t_i) = 1$ except when $a_i$ is even and $\frac n{a_i}$ is odd.
In these cases, of which there are an odd number, $\det(t_i) = 1$, so again $\prod_i \det(t_i) = 1$.

What remains is the case $\sigma_{\Gamma,n} = -1$, and here if $t_i$ has centralizer dimension \eqref{least centralizer} for all $i$, then the product $\prod_i \det(t_i)$ is $-1$ times a product of 
elements of odd order, so it cannot be $1$.  On the other hand, if we choose one $t_i$ with $a_i$ even and $\frac n{a_i}$ odd, and choose it to have determinant $1$
and centralizer dimension $\frac {n^2}{a_i}+2$, and all other $t_j$ have minimal centralizer dimension and $\det(t_j)=\pm 1$, then the product $\prod_i \det(t_i) = 1$.
\end{proof}

We now prove Theorem~\ref{Dimension}.
\begin{proof}
By Theorem~\ref{Roots}, there exist $Q$ and $N$ such that if $q>Q$ is relatively prime to $A$ and $n>N$, then
$$\frac 12 < \frac{q^{(1-2g)n^2}|\Hom(\Gamma,\GL_n(q))|}{qJ_{q,n}(a_1,\ldots,a_r)} < \frac 32.$$

For any fixed $q$ and $n$, let $X_{q,n}$ denote the variety $\Hom(\Gamma,\GL_n)$ over the field $\F_q$.
Then, for all positive integers $m$,
$$\frac 12 < \frac{q^{(1-2g)mn^2}|X_{q,n}(\F_{q^m})|}{q^mJ_{q^m,n}(a_1,\ldots,a_r)} < \frac 32.$$
By Proposition~\ref{Optimal tuple},
\begin{align*}
\dim X_{q,n} &= \limsup_m \log_{q^m} |X_{q,n}(\F_{q^m})| \\
&= 1+(2g-1)n^2 + \limsup_m \log_{q^m} J_{q^m,n}(a_1,\ldots,a_r)\\
&=  1+(2g-1)n^2 +rn^2- 1 + \sigma_{\Gamma,n}
-\sum_{i=1}^r  \Bigl( \frac{n^2}{a_i} + a_i\Bigl\{\frac n{a_i}\Bigr\}\Bigl\{-\frac n{a_i}\Bigr\}\Bigr)\\
&=  \sigma_{\Gamma,n} + (1-\chi(\Gamma))n^2
- \sum_{i=1}^r a_i\Bigl\{\frac n{a_i}\Bigr\}\Bigl\{-\frac n{a_i}\Bigr\}.
\end{align*}
The first claim of the theorem follows in the positive characteristic case.

For characteristic zero, we consider the scheme $\Hom(\Gamma,\GL_{n,\Z})$ over $\Spec \Z$ whose $\F_p$ fiber is the $n$-dimensional representation variety of $\Gamma$ over $\F_p$.
By the constructibility of the set of dimensions of irreducible fiber components \cite[Proposition 9.5.5]{EGA}, the dimension of the generic fiber must be the same as the common dimension of any infinite set of fibers over closed points.

Finally, for the second claim of the theorem, we observe that $\{t\}\{-t\} \ge -\frac 14$ for all real $t$, so for all $i$,
$$a_i\bigl\{\frac n{a_i}\bigr\}\bigl\{-\frac n{a_i}\bigr\} \ge -\frac{a_i}4.$$
We have $\sigma_{\Gamma,n} > -\frac 12$ unless there is at least one value of $i$ for which $a_i$ is even and $\frac n{a_i}$ is integral.  For this value of $i$, $a_i\ge 2$, so 
$$\sigma_{\Gamma,n} + a_i\bigl\{\frac n{a_i}\bigr\}\bigl\{-\frac n{a_i}\bigr\} \ge -\frac 12-\frac{a_i}4,$$
which finishes the proof.
\end{proof}

\end{document}